\documentclass[12pt,reqno]{amsart}
\usepackage{amssymb,amscd}
\usepackage[alphabetic]{amsrefs}
\usepackage[all]{xy}
\usepackage[headings]{fullpage}
\usepackage[T1]{fontenc}
\usepackage{libertine}
\usepackage{stackengine}

\numberwithin{equation}{section}

\swapnumbers
\theoremstyle{plain}
\newtheorem{thm}[subsection]{Theorem}

\newtheorem{prop}[subsection]{Proposition}
\newtheorem{propss}[subsubsection]{Proposition}
\newtheorem{lemma}[subsubsection]{Lemma}
\newtheorem{cor}[subsection]{Corollary}
\newtheorem{corss}[subsubsection]{Corollary}

\theoremstyle{definition}

\theoremstyle{remark}
\newtheorem{rem}[subsection]{Remark}
\newtheorem{rems}[subsection]{Remarks}
\newtheorem{remss}[subsubsection]{Remark}






\numberwithin{equation}{section}
\theoremstyle{plain}
\newtheorem{prop-def}[subsection]{Proposition/Definition}

\usepackage[OT2,T1]{fontenc}
\DeclareSymbolFont{cyrletters}{OT2}{wncyr}{m}{n}
\DeclareMathSymbol{\sha}{\mathalpha}{cyrletters}{"58}

\renewcommand{\AA}{\mathcal{A}}

\newcommand{\CC}{\mathcal{C}}

\newcommand{\DD}{\mathcal{D}}

\newcommand{\JJ}{\mathcal{J}}

\renewcommand{\SS}{\mathcal{S}}

\newcommand{\XX}{\mathcal{X}}

\newcommand{\YY}{\mathcal{Y}}

\newcommand{\Curve}{\mathcal{C}}

\newcommand{\EE}{\mathcal{E}}

\newcommand{\LL}{\mathcal{L}}

\newcommand{\OO}{\mathcal{O}}

\newcommand{\F}{\mathbb{F}}
\newcommand{\Fp}{{\mathbb{F}_p}}

\newcommand{\Fq}{{\mathbb{F}_q}}
\newcommand{\Fqn}{{\mathbb{F}_{q^n}}}

\newcommand{\Fpbar}{{\overline{\mathbb{F}}_p}}

\newcommand{\Fqbar}{{\overline{\mathbb{F}}_q}}

\newcommand{\kbar}{{\overline{k}}}

\newcommand{\ratto}{{\dashrightarrow}}
\newcommand{\Ql}{{\mathbb{Q}_\ell}}

\newcommand{\Qp}{{\mathbb{Q}_p}}

\newcommand{\Z}{\mathbb{Z}}

\newcommand{\Q}{\mathbb{Q}}

\newcommand{\A}{\mathbb{A}}
\renewcommand{\P}{\mathbb{P}}



\newcommand{\m}{\mathfrak{m}}

\newcommand{\<}{\langle}
\renewcommand{\>}{\rangle}
\newcommand{\into}{\hookrightarrow}

\newcommand{\isoto}{\,\tilde{\to}\,}

\newcommand{\tensor}{\otimes}
\newcommand{\compose}{\circ}
\newcommand{\sdp}{{\rtimes}}
\newcommand{\nodiv}{\not|}
\def\nodiv{\mathrel{\mathchoice{\not|}{\not|}{\kern-.2em\not\kern.2em|}
{\kern-.2em\not\kern.2em|}}}
\newcommand{\PGL}{\mathrm{PGL}}

\newcommand{\GL}{\mathrm{GL}}

\newcommand{\G}{\mathbb{G}}


\DeclareMathOperator{\tr}{Tr}

\DeclareMathOperator{\res}{Res}

\DeclareMathOperator{\ord}{ord}
\DeclareMathOperator{\rk}{Rank}

\DeclareMathOperator{\Hom}{Hom}
\DeclareMathOperator{\aut}{Aut}
\DeclareMathOperator{\Pic}{Pic}

\DeclareMathOperator{\NS}{NS}

\DeclareMathOperator{\Reg}{Reg}

\DeclareMathOperator{\Br}{Br}

\DeclareMathOperator{\lcm}{lcm}
\DeclareMathOperator{\gal}{Gal}

\DeclareMathOperator{\Sel}{Sel}
\DeclareMathOperator{\Fr}{Fr}

\def\clap#1{\hbox to 0pt{\hss#1\hss}}

\DeclareMathOperator{\BS}{BS}
\newcommand{\Hbar}{\overline{H}}
\newcommand{\Gd}{\underline{G}_d}

\begin{document}
\title[Brauer-Siegel]{On the Brauer-Siegel ratio\\
for abelian varieties over function fields}

\author{Douglas Ulmer}
\address{Department of Mathematics \\ University of Arizona
  \\ Tucson, AZ~~85721 USA}
\email{ulmer@math.arizona.edu}

\date{\today}


\begin{abstract}
  Hindry has proposed an analogue of the classical Brauer-Siegel
  theorem for abelian varieties over global fields.  Roughly speaking,
  it says that the product of the regulator of the Mordell-Weil group
  and the order of the Tate-Shafarevich group should have size comparable
  to the exponential differential height.  Hindry-Pacheco and Griffon
  have proved this for certain families of elliptic curves over
  function fields using analytic techniques.  Our goal in this work is
  to prove similar results by more algebraic arguments, namely by a
  direct approach to the Tate-Shafarevich group and the regulator.  We
  recover the results of Hindry-Pacheco and Griffon and extend them to
  new families, including families of higher-dimensional abelian
  varieties.
\end{abstract}

\maketitle

\section{Introduction}
The classical Brauer-Siegel theorem \cite{Brauer50} says that if $K$
runs through a sequence of Galois extensions of $\Q$ with
discriminants $d=d_K$ satisfying $[K:\Q]/\log d\to0$, then
$$\log(Rh)/\log\sqrt{d}\to1$$ 
where $R=R_K$ and $h=h_K$ are the regulator and class number of $K$.
The proof uses the class number formula
$$\res_{s=1}\zeta_K(s)=\frac{2^{r_1}(2\pi)^{r_2}Rh}{w\sqrt{d}}$$
and analytic methods.

In \cite{Hindry07}, Hindry conjectured an analogue of the Brauer-Siegel
theorem for abelian varieties.  If $A$ is an abelian variety over a
global field $K$ with regulator $R$, Tate-Shafarevich group $\sha$
(assumed to be finite), and exponential differential height $H$ (definitions
below), Hindry proposed that the Brauer-Siegel ratio
$$\BS(A):=\log(R|\sha|)/\log(H)$$
should tend to 1 for any sequence of abelian varieties over a fixed
$K$ with $H\to\infty$.

In \cite{HindryPacheco16}, Hindry and Pacheco considered the case
where $K$ is a global function field of characteristic $p>0$.  Assuming
the finiteness of $\sha$, they proved (Cor.~1.13) that
\begin{equation}\label{eq:BS-HP}
0\le\liminf_A\BS(A)\le\limsup_A\BS(A)=1
\end{equation}
where the limits are over the family of all non-constant abelian
varieties of a fixed dimension over $K$ ordered by height.  Note that
this leaves open the possibility of a sequence of abelian varieties
with Brauer-Siegel ratio tending to a limit $<1$, a possibility not
envisioned in Hindry's earlier paper.  Hindry and Pacheco also
conjectured and gave evidence for the claim that the lower bound
$0\le\liminf_A\BS(A)$ should be an equality when $A$ runs through the
family of quadratic twists of a fixed elliptic curve.  Moreover, they
gave an example (Thm.~1.4) of a family of elliptic curves $E$ with
$H\to\infty$ and proved $\lim_E\BS(E)=1$ without having to assume any
unproven conjectures.  In his Paris VII thesis, Griffon gave several
other examples of families of elliptic curves where $\lim_E\BS(E)=1$
again without assuming unproven conjectures.

As with the original Brauer-Siegel theorem, the analyses of
Hindry-Pacheco and Griffon use analytic techniques.
More precisely, finiteness of the Tate-Shafarevich group
implies the conjecture of Birch and Swinnerton-Dyer (in its strong
form), and so a class number formula of the shape
$$L^*(A)=\alpha\frac{|\sha|R}{H}$$
where $L^*(A)$ is the leading Taylor coefficient of the $L$-function
at $s=1$ and $\alpha$ is a relatively innocuous, non-zero factor.  (We
will give the precise statement below.)  Hindry-Pacheco and
Griffon then prove their results by estimating (and in some cases
calculating quite explicitly) $L^*(A)$.

Our goal in this work is to prove several results about Brauer-Siegel
ratios by more algebraic arguments, in other words through a direct
approach to the Tate-Shafarevich group and the regulator.  More
precisely, we prove the following results without recourse to
$L$-functions: 
\begin{enumerate}
\item a transparent and conceptual proof that $\liminf_A\BS(A)\ge0$
  via a lower bound on the regulator;
\item a new connection between the growth of $|\sha|$ as the finite
  ground field is extended and the number $R|\sha|$ over the given field; 
\item a general calculation of the limiting Brauer-Siegel ratio for
  the sequence $E^{(p^n)}$ of Frobenius pull-backs of an elliptic
  curve $E$;
\item a new proof that $\lim_d\BS(E_d)=1$ in the families of elliptic
  curves studied by Hindry-Pacheco and Griffon;
\item proofs that $\lim_d\BS(J_d)=1$ for families of Jacobians of all
  dimensions;
\item and results on quadratic twists that illustrate the limitations of
  our $p$-adic techniques.
\end{enumerate}

``Without recourse to $L$-functions'' means by algebraic methods.  We
do use the BSD formula, but this is just a bookkeeping device for the
connections between cohomology and other invariants.  We do not use
the Euler product or any properties of $L(A,s)$ as a function of
$s$. That said, we have not eliminated analysis entirely:  points (4-6)
above all require an equidistibution result for the action of
multiplication by $p$ on $\Z/d\Z$.

The plan of the paper is as follows: In Section~2, we set up notation,
review and extend certain auxiliary results of Hindry-Pacheco on
component groups, and prove a lemma useful for estimating heights.
In Section~3, we prove a general integrality result on regulators of
abelian varieties which leads immediately to a lower bound on the
Brauer-Siegel ratio.  In Section~4, we introduce ``$\dim\sha(A)"$, a
new and extremely useful technical device which is closely related to
slopes of $L$-functions and which is computable in many interesting
situations.  As a first application, in Section~5 we compute the
limiting Brauer-Siegel ratio for the sequence of Frobenius pull-backs
of an elliptic curve.  Sections~6 through 9 develop $p$-adic cohomological
machinery that allows one to compute $\dim\sha(A)$ and estimate
$\BS(A)$ for Jacobians of curves with N\'eron models related to
products of Fermat curves.  In the rest of the paper, we use this
machinery to recover the results of Hindry-Pacheco and Griffon and to
extend them to higher genus Jacobians.  Section~10 discusses curves
defined by equations involving 4 monomials.  Section~11 discusses
curves coming from Berger's construction \cite{Berger08}.  Finally, in
Section~12 we consider twists of constant elliptic curves.

It is a pleasure to thank Richard Griffon for several helpful
comments and an anonymous referee for his or her careful reading of
the paper and valuable suggestions.

\numberwithin{equation}{subsection}
\section{Preliminaries}\label{s:prelims}
\subsection{Notation and definitions}\label{ss:prelims}
We set notation and recall definitions which will be used throughout
the paper.

Fix a prime number $p$, a power $q$ of $p$, and a smooth, projective,
absolutely irreducible curve $\CC$ of genus $g_\CC$ over $k=\Fq$, the
field of $q$ elements.  Let $K$ be the function field $\Fq(\CC)$.  We
write $v$ for a place of $K$, $d_v$ for the degree of $v$, $K_v$ for
the completion of $K$ at $v$, $\OO_v$ for the ring if integers in
$K_v$, and $k_v$ for the residue field, a finite extension of $k$ of
degree $d_v$

Let $A$ be an abelian variety 
over $K$ with dual $\hat A$.  A theorem of Lang and N\'eron guarantees
that the \emph{Mordell-Weil groups} $A(K)$ and $\hat A(K)$ are
finitely generated abelian groups.  (See \cite{LangNeron59}, or
\cite{Conrad06} for a more modern account.)

There is a bilinear pairing
$$\<\cdot,\cdot\>:A(K)\times\hat A(K)\to\Q$$
which is non-degenerate modulo torsion.  (This is the canonical
N\'eron-Tate height divided by $\log q$.  See \cite{Neron65} for the
definition and \cite[B.5]{HindrySilvermanDG} for a friendly
introduction.)  Choosing a basis $P_1,\dots,P_r$ for $A(K)$ modulo
torsion and a basis $\hat P_1,\dots,\hat P_r$ for $\hat A(K)$ modulo
torsion, we define the \emph{regulator} of $A$ as
$$\Reg(A):=|\det\< P_i,\hat P_j\>_{1\le i,j\le r}|.$$
The regulator is a positive rational number, well-defined independently
of the choice of bases.

We write $H^1(K,A)$ for the \'etale cohomology of $K$ with
coefficients in $A$ and similarly for $H^1(K_v,A)$.  The
\emph{Tate-Shafarevich group} of $A$  is defined as
$$\sha(A):=\ker\left(H^1(K,A)\to\prod_v H^1(K_v,A)\right)$$
where the product of over the places of $K$ and the map is the product
of the restriction maps.  This group is conjectured to be finite, and
we assume this conjecture throughout the paper.  However, in all of
the explicit calculations below, we can in fact prove that $\sha(A)$
is finite without additional assumptions.

Let $\AA\to\CC$ be the N\'eron model of $A/K$.  This is a smooth group
scheme over $\CC$ with a certain universal property whose generic
fiber is $A/K$.  See \cite{BoschLutkebohmertRaynaudNM} for a modern
account.  Let $s:\CC\to\AA$ be the zero-section.  We define an
invertible sheaf $\omega$ on $\CC$ by
$$\omega:=s^*\left(\Omega^{\dim(A)}_{\AA/\CC}\right)
=\bigwedge^{\dim(A)}s^*\left(\Omega^1_{\AA/\CC}\right).$$
The \emph{exponential differential height} of $A$ (which we often
refer to simply as the \emph{height}) is
$$H(A):=q^{\deg\omega}.$$
If $A$ is an elliptic curve and $\CC=\P^1$, then $\deg\omega$ has
simple interpretation in terms of the degrees of the coefficients in a
Weierstrass equation defining $A$.  See \cite[Lecture~3]{Ulmer11} for
details.

For each place $v$ of $K$, we write $c_v$ for the number of connected
components of the special fiber of $\AA$ at $v$ which are defined over
the residue field.  We define the \emph{Tamagawa number} of $A$ as
$$\tau(A):=\prod_vc_v.$$
(This usage is in conflict with our earlier papers, in particular
\cite{Ulmer14b}, where the Tamagawa number is defined to be
$$\frac{\tau(A)}{H(A)q^{\dim(A)(g_\CC-1)}}.$$  
The earlier usage is historically more appropriate, as the definition
there is a volume defined in close analogy with Tamagawa's work on
linear algebraic groups, cf.~\cite{WeilAAG}, but the terminology we
adopt here is more convenient for our current purposes.)

Next we consider the Hasse-Weil $L$-function of $A$ over $K$, denoted
$L(A,s)$.  It is a function of a complex variable $s$ defined as an
Euler product over the places of $K$ which is convergent in the
half-plane $\Re s>3/2$ and which is known to have a meromorphic
continuation to the whole $s$-plane.  More precisely, $L(A,s)$ is a
rational function in $q^{-s}$, and if the $K/k$-trace of $A$ is
trivial, then $L(A,s)$ is in fact a polynomial in $q^{-s}$ of the form
$$\prod_i(1-\alpha_iq^{-s})$$
where the inverse root $\alpha_i$ are Weil integers of size $q$.

We define the leading coefficient of the $L$-function as
$$L^*(A):=\frac{1}{(\log q)^r}\frac1{r!}
\left.\left(\frac{d}{ds}\right)^rL(A,s)\right|_{s=1}$$
where $r$ is the order of vanishing $r:=\ord_{s=1}L(A,s)$.  (With the
factor $1/(\log q)^r$, this is the leading coefficient of $L$ as a
rational function in $T=q^{-s}$, and with this normalization, it has
the virtue of being a rational number.)

All of the invariants mentioned above are connected by the conjecture
of Birch and Swinnerton-Dyer (``BSD conjecture''), which we take to
be the conjunction of the following three statements:
\begin{enumerate}
\item $\ord_{s=1}L(A,s)=\rk A(K)$
\item $\sha(A)$ is finite (with order denoted $|\sha(A)|$)
\item we have an equality
$$L^*(A)=\frac{\Reg(A)|\sha(A)|}{H(A)}
\frac{\tau(A)}{q^{\dim(A)(g_\CC-1)}|A(K)_{tor}|\cdot|\hat A(K)_{tor}|}$$
\end{enumerate}
It is known that parts (1) and (2) are equivalent, and when they hold,
part (3) holds as well.  (See \cite{KatoTrihan03} for the end of a
long line of reasoning leading to these results.)

From the point of view of the Brauer-Siegel ratio, the main terms of
interest in the third part of the BSD conjecture are $\Reg(A)$,
$|\sha(A)|$, and $H(A)$, whereas the other factors are either constant
($q^{\dim(A)(g_\CC-1)}$) or turn out to be negligible ($\tau(A)$ and
$|A(K)_{tor}\times \hat A(K)_{tor}|$).  We will discuss the Tamagawa
number and the results of Hindry and Pacheco on it in the next
section, whereas the torsion subgroups $A(K)_{tor}$ and
  $\hat A(K)_{tor}$ will play almost no role in our analysis.


\subsection{Bounds on Tamagawa numbers (1)}
In \cite[Prop.~6.8]{HindryPacheco16}, Hindry and Pacheco bound the
Tamagawa number in terms of the height under certain tameness
assumptions.  More precisely, they showed that for a fixed global
field $K$, as $A$ varies over all abelian varieties of fixed dimension
$d$ over $K$, we have
\begin{equation*}
\tau(A)=O(H^\epsilon)
\end{equation*}
for all $\epsilon>0$, provided that $p>2\dim(A)+1$ or $A$ 
has everywhere semi-stable reduction.

In this section and Sections~\ref{ss:T2} and \ref{ss:T3}, we outline
three improvements of this result, all motivated by applications later
in the paper.

\begin{lemma}\label{lemma:tau-ec}
Let $E$ run through the set of all elliptic curves over a global
function field $K$.  Then
$$\tau(E)=O(H(E)^\epsilon)$$
for every $\epsilon>0$.
\end{lemma}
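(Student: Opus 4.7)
The plan is to combine a case-by-case bound coming from the Kodaira--N\'eron classification of reduction types with a divisor-function style counting argument. Let $\Delta_E$ denote the minimal discriminant divisor of $E$ on $\CC$. For an elliptic surface in minimal Weierstrass form, $\deg\Delta_E = 12\deg\omega = 12\log_q H(E)$. Writing $n_v = \ord_v(\Delta_E)$, we have
$$\sum_v d_v\,n_v \;=\; 12\log_q H(E).$$

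The first step is the uniform inequality $c_v \le n_v$ at every place of bad reduction. Running through Tate's list type by type, one checks that the tame value of $\ord_v(\Delta_E)$ always dominates the order of the component group: for multiplicative type $I_n$ both quantities are bounded by $n$, while for additive reduction $c_v\le 4$ and the tame $\ord_v(\Delta_E)$ is at least $2$ (with the extremal additive cases $III^*$ and $II^*$ having $c_v\le 2,1$ and $\ord_v\Delta_E = 9,10$). In residue characteristics $2$ and $3$, wild ramification can only increase $\ord_v(\Delta_E)$ while the component group at $v$ is still controlled by the Kodaira type, so $c_v\le n_v$ persists. This is the point where one cannot simply cite the tame bound used by Hindry--Pacheco, and it is the step that requires the most care; a reference to the wild Tate algorithm (e.g.\ Szydlo's or Halberstadt's extension) should suffice.

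Given this, $\log_q\tau(E) \le \sum_v\log_q n_v$, the sum taken over places of bad reduction. Fix $\epsilon>0$ and choose $T=T(\epsilon,q)$ so large that $\log_q n \le (\epsilon/12)\,n d$ whenever $nd\ge T$; this is possible since $\log n/n\to 0$. Partition the bad places into $S_{\mathrm{big}}=\{v:n_v d_v\ge T\}$ and $S_{\mathrm{small}}=\{v:n_v d_v<T\}$. The $S_{\mathrm{big}}$-contribution is at most
$$\frac{\epsilon}{12}\sum_v n_v d_v \;=\; \epsilon\log_q H(E).$$
Every $v\in S_{\mathrm{small}}$ satisfies $d_v\le T$ and $n_v\le T$, and since $\CC$ has only finitely many places of any bounded degree (call that number $C(T)$), the $S_{\mathrm{small}}$-contribution is bounded by $C(T)\log_q T$, a constant depending only on $\CC$, $q$, and $\epsilon$ and not on $E$. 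Exponentiating gives
$$\tau(E) \;\le\; T^{C(T)}\,H(E)^{\epsilon},$$
which is $O(H(E)^\epsilon)$ as required. Replacing $\epsilon$ by $\epsilon/2$ at the start absorbs the constant for $H(E)$ sufficiently large, completing the argument.
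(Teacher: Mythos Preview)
Your proof is correct and follows essentially the same route as the paper: establish $c_v\le\ord_v(\Delta_v)$ at each bad place, sum to get $\sum_v c_v d_v\le 12\deg\omega$, then convert the additive bound to the multiplicative one. The only real difference is that the paper obtains $c_v\le\ord_v(\Delta_v)$ in one line from Ogg's formula $\ord_v(\Delta_v)=c_v+f_v-1$ (with $f_v\ge1$ at bad places), which handles all residue characteristics uniformly and avoids your case-by-case check and the appeal to a wild Tate algorithm; your explicit divisor-function splitting is exactly the argument the paper outsources to \cite[Prop.~6.8]{HindryPacheco16}.
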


The point is that we allow arbitrary characteristic and make no
semi-stability hypothesis.  This result was also proven by Griffon
\cite[Thm.~1.5.4]{GriffonThesis}, but we include a proof here for the
convenience of the reader.

\begin{proof}
This follows easily from Ogg's formula \cite{Ogg67} (see also
\cite{Saito88} for a more general result proven with modern methods).
Indeed, if $\Delta_v$ is a minimal discriminant for $E$ at the place
$v$, Ogg's formula says that
$$\ord_v(\Delta_v)=c_v+f_v-1$$
where $f_v$ is the exponent of the conductor of $E$ at $v$.  Summing
over places where $E$ has bad reduction (i.e., where
$\ord_v(\Delta_v)\ge1$) and using that $f_v-1\ge0$ at these places, we
have
$$\sum_vc_vd_v\le\sum_v\ord_v(\Delta_v)d_v\le12\deg(\omega)$$
where $d_v$ is the degree of $v$ and where the last inequality holds
because $\Delta$ can be interpreted as a section of
$\omega^{\tensor12}$.  This recovers the main bound (Theorem~6.5 of
\cite{HindryPacheco16}), and the rest of the argument---converting this
additive bound to a multiplicative bound---proceeds
exactly as in \cite[Prop.~6.8]{HindryPacheco16}.
\end{proof}

\subsection{Families from towers of fields}\label{ss:towers}
Let $A$ be an abelian variety over a function field $K$.  For each
positive integer $d$ (or positive integer $d$ prime to $p$), let $K_d$
be a geometric extension of $K$, and let $A_d=A\times_KK_d$.  This
gives a sequence of abelian varieties and one may ask about the
behaviour of $\BS(A_d)$ as $d\to\infty$.

For most of the paper, we will be concerned with the special case
where there are isomorphisms $K_d\cong K$ for all $d$.  In this case,
we may view the sequence $A_d$ as a sequence of abelian varieties over
a \emph{fixed} function field.  This is the context of the results and
conjectures of Hinry and Pacheco, and we will give four examples in
the rest of this section.  Nevertheless, the general case is also
interesting, and we will give develop foundational results in a more
general context in Section~\ref{ss:TGGE}.

\subsubsection{Kummer families}
Let $K=\Fq(t)$, and for each positive integer $d$ prime to $p$, let
$K_d=\Fq(u)$ where $u^d=t$.  Note that the extension $K_d/K$ is
unramified away from the places $t=0$ and $t=\infty$ of $K$.  Let $A$ be
an abelian variety over $K$, and let $A_d$ be the abelian variety over
$K$ obtained by base change to $K_d$, followed by the isomorphism of
fields $\Fq(u)\cong\Fq(t)$, $u\mapsto t$.  (In more vivid terms, $A_d$
is the result of substituting $t^d$ for each appearance of $t$ in the
equations defining $A$.)  We say that the sequence of abelian
varieties $A_d$ is the \emph{family associated to $A$ and the Kummer
  tower\/}.  Such families have been a prime source of examples for
the Brauer-Siegel ratio.

\subsubsection{Artin-Schreier families}
We may proceed analogously with the tower of Artin-Schreier
extensions.  Again, let $K=\Fq(t)$, and for each positive integer $d$,
let $K_d=\Fq(u)$ where $u^{p^d}-u=t$.  Note that the extension $K_d/K$
is unramified away from the place $t=\infty$ of $K$.  Let $A$ be an
abelian variety over $K$, and let $A_d$ be the abelian variety over
$K$ obtained by base change to $K_d$ followed by the isomorphism of
fields $\Fq(u)\cong\Fq(t)$, $u\mapsto t$.  (In more vivid terms, $A_d$
is the result of substituting $t^{p^d}-t$ for each appearance of $t$
in the equations defining $A$.)  We say that the sequence of abelian
varieties $A_d$ is the \emph{family associated to $A$ and the
  Artin-Schreier tower\/}.

\subsubsection{Division tower families}
One may also consider an elliptic curve variant: Let $K$ be the
function field $\Fq(E)$ where $E$ is an elliptic curve over $\Fq$.
For each positive integer $d$ prime to $p$, consider the field
extension $K_d/K$ associated to the multiplication map $d:E\to E$.
Thus $[K_d:K]=d^2$, but $K_d$ is canonically isomorphic as a field
(even as an $\Fq$-algebra) to $K$.  Given an abelian variety $A$ over
$K$, let $A_d$ be the abelian variety over $K$ obtained by
base-changing $A$ to $K_d$ and then using the isomorphism of fields
$K_d\cong K$.  We say that the sequence $A_d$ of abelian varieties
over $K$ is the \emph{family associated to a division tower\/}.
Everything we say about Kummer and Artin-Schreier towers has an
obvious analogue for division towers.  In most cases the latter is
simpler because in the division case, $K_d/K$ is unramified.

\subsubsection{$\PGL_2$ families}
Let $K=\Fq(t)$ and for each positive integer $d$ let $K_d=\Fq(u)$
where $\Fq(u)/\Fq(t)$ is the field extension associated to the
quotient morphism 
$$\P^1\to\P^1/\PGL_2(\F_{p^d})\cong \P^1.$$  
We normalize the isomorphism so that the $\F_{p^d}$-rational points on
the upper $\P^1$ map to $0$ and
$\P^1(\F_{p^{2d}})\setminus\P^1(\F_{p^d})$ maps to $1$.  Then the
extension $K_d/K$ is unramified away from the places $t=0$ and $t=1$
of $K$, and it is tamely ramified over $t=1$.  Given an abelian
variety $A$ over $K$, let $A_d$ be the abelian variety over $K$
obtained by base-changing $A$ to $K_d$ and then using the isomorphism
of fields $\Fq(u)\cong\Fq(t)$, $u\mapsto t$.  We say that the sequence
$A_d$ of abelian varieties over $K$ is the \emph{family associated to
  the $\PGL_2$ tower\/}.

\medskip
The discussion above gives four different meanings to the notations
$K_d$ and $A_d$!  Which meaning is intended in each use below should
be clear from the context.

We end this section with a simple lemma that plays a key role in our
analysis of Tamagawa numbers in families associated to towers.

\begin{lemma}\label{lemma:easy-place-bounds}
  Let $K=\Fq(\Curve)$ be a function field, and let $K_d$ be a sequence
  of geometric extensions of $K$ such that the genus of \textup{(}the
  curve associated to\textup{)} $K_d$ is $\le1$ for all $d$.  Then for
  every place $v$ of $K$, there is a constant $C_v$ depending only on
  $q$ and $\deg v$ such that for all $d$, the number of places of
  $K_d$ dividing $v$ is at most $C_v[K_d:K]/\log[K_d:K]$.
\end{lemma}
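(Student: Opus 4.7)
The plan is to exploit two simple constraints on the places \(w\) of \(K_d\) lying above \(v\). Write \(n=[K_d:K]\), let \(\CC_d\) be the smooth projective curve with function field \(K_d\), and for each such \(w\) write \(e_w,f_w\) for the ramification index and residue degree of \(w/v\). The residue field of \(w\) is \(\F_{q^{d_v f_w}}\), so the closed point of \(\CC_d\) corresponding to \(w\) has degree \(d_v f_w\) over \(\F_q\). The two constraints are: \textbf{(a)} the fundamental identity \(\sum_{w\mid v}e_w f_w=n\), which forces \(\sum_{w\mid v}f_w\le n\); and \textbf{(b)} the Hasse-Weil bound together with the hypothesis \(g_{\CC_d}\le 1\), which yields
\[
\#\CC_d(\F_{q^m})\le q^m+1+2\sqrt{q^m}\qquad\text{for every }m\ge 1,
\]
with the crucial feature that the right-hand side is independent of \(d\).

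From (b), since each closed point of \(\CC_d\) of degree \(m\) contributes exactly \(m\) geometric points to \(\CC_d(\F_{q^m})\), the number of places of \(K_d\) of degree exactly \(d_v j\) is at most \((q^{d_v j}+1+2q^{d_v j/2})/(d_v j)\). Letting \(N_j\) denote the number of places \(w\mid v\) with \(f_w=j\), I obtain
\[
N_j\le C'\,\frac{q^{d_v j}}{j}
\]
for a constant \(C'\) depending only on \(q\) and \(d_v\).

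Now I choose a threshold \(J\) of size roughly \(\log n/(d_v\log q)\) (concretely, the largest integer with \(q^{d_v J}\le n\)) and split \(N=\sum_{j\le J}N_j+\sum_{j>J}N_j\). The first sum is geometric in nature and controlled by its largest term:
\[
\sum_{j\le J}N_j\le C'\sum_{j\le J}\frac{q^{d_v j}}{j}
\le C''\,\frac{q^{d_v J}}{J}\le C''\,\frac{n}{J},
\]
using the ratio \(q^{d_v}\) between consecutive terms to sum the series. The second sum is handled by (a): since \(N_j\le(jN_j)/J\) whenever \(j>J\) and \(\sum_j jN_j\le n\), we have \(\sum_{j>J}N_j\le n/J\). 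Combining the two estimates and recalling \(J\gtrsim\log n\) with implied constants depending only on \(q\) and \(d_v\), I obtain
\[
N\le C_v\,\frac{n}{\log n}=C_v\,\frac{[K_d:K]}{\log[K_d:K]},
\]
as required, absorbing the small-\(n\) cases into the constant.

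There is no real obstacle: the Hasse-Weil bound, which is the only step using \(g_{\CC_d}\le 1\), gives a uniform-in-\(d\) ceiling on \(N_j\) that grows only geometrically in \(j\), while (a) caps the total mass \(\sum jN_j\) by \(n\). The delicate point to verify is simply that both partial sums balance at the rate \(n/\log n\), which is forced by equating \(q^{d_v J}\) with \(n\).
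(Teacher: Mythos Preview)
Your proof is correct and follows essentially the same approach as the paper: split the places $w\mid v$ according to whether their residue degree lies below or above a threshold of size $\asymp \log n/\log q$, use the Weil bound (uniform because $g_{\CC_d}\le 1$) to control the low-degree places, and use the degree identity $\sum_w e_wf_w=n$ to control the high-degree ones. The paper's write-up is terser (it bounds the two pieces directly without the intermediate $N_j$), but the underlying idea and the choice of threshold are the same.
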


\begin{proof}
  Write $D=[K_d:K]$ and set $x=\log D/\log q$.  Fix a place $v$ of $K$.
  Then the number of places $w$ of $K_d$ dividing $v$ and of absolute
  degree $\ge x$ is at most 
$$\frac D{x/\deg v}=\deg v\log q\frac D{\log D}.$$    
On the other hand, by the Weil bound, the total number of places of
$K_d$ of degree $\le x$ is bounded by $Cq^x/x=C'D/\log D$ where $C$
and $C'$ depend only on $q$, $\deg v$ and the genus of $K_d$.  Since
the latter is either 0 or 1, the constant can be taken to depend only
on $q$ and $\deg v$.  This shows that the total number of places of
$K_d$ dividing $v$ is $\le C_v D/\log D$ where $C_v$ depends only on
$q$ and $\deg v$.
\end{proof}

\subsection{Towers of geometrically Galois extensions}\label{ss:TGGE}
In this section, we discuss a more general class of towers of fields
$K_d$ where we are able to bound Tamagawa numbers of the associated
sequences of abelian varieties.  This additional generality was
suggested by the anonymous referee, to whom we are grateful.  Readers
who are mainly interested in the applications to the Kummer tower
later in the paper are invited to skip ahead to Section~\ref{ss:T2}

\subsubsection{Geometrically Galois extensions}
Let $k$ be a field and let $K=k(\Curve)$ be the function field of a
smooth, projective, geometrically irreducible curve over $k$.  We say
that a finite, geometric extension $K_d/K$ is \emph{geometrically
  Galois} if the Galois closure $L_d$ of $K_d$ over $K$ has the form
$L_d=k_dK_d$ where $k_d$ is a finite Galois extension of $k$.
Equivalently, there is a finite Galois extension $k_d$ of $k$ such
that $k_dK_d$ is Galois over $k_dK$.  (We take $k_d$ to be minimal
such extension.)  Let $G_d=\gal(L_d/k_dK)$ and
$\Gamma_d=\gal(k_d/k)\cong\gal(k_dK/K)\cong\gal(L_d/K_d)$, so that
$\Gamma_d$ acts on $G_d$ by conjugation and $\gal(L_d/K)$ is the
semi-direct product $G_d\sdp\Gamma_d$.  We call $G_d$, with its action
of $\Gamma_d$, the \emph{geometric Galois group} of $K_d/K$ and we
call $k_d$ the \emph{splitting field} of $G_d$.  (We remark that there
is a finite \'etale group scheme $\Gd$ over $k$ attached to $G_d$ with
its $\Gamma_d$ action, and $\Gd$ becomes a constant group over $k_d$,
see \cite[\S II.1]{MilneEC}.)

\subsubsection{Towers of geometrically Galois extensions}
We now consider a tower of geometrically Galois extensions $K_d/K$
indexed by positive integers $d$ (or positive integers relatively
prime to $p$) with containments $K_d\subset K_{d'}$ whenever $d$
divides $d'$.  These containments induce surjections $G_{d'}\to G_d$
and $\Gamma_{d'}\to\Gamma_d$ which are compatible in the obvious sense
with the actions of $\Gamma_d$ and $\Gamma_{d'}$ on $G_d$ and $G_{d'}$
respectively.

Each of the families of towers in Section~\ref{ss:towers} gives an
example of a tower of geometrically Galois extensions. 

In the case of the Kummer tower, the geometric Galois group is
$G_d=\mu_d(\Fqbar)$, the splitting field $k_d$ is $\Fq(\mu_d)$, and
$\Gamma_d=\gal(\Fq(\mu_d)/\Fq)$ is the subgroup of $(\Z/d\Z)^\times$
generated by $q$.

In the Artin-Schreier tower, the geometric Galois group is
$G_d=\F_{p^d}$, the splitting field $k_d$ is $\Fq\F_{p^d}$, and
$\Gamma_d=\gal(\Fq\F_{p^d}/\Fq)$ is the cyclic group generated by the
$q$-power Frobenius.

In the division tower corresponding to an elliptic curve $E$ over
$\Fq$, the geometric Galois group is $E[d]$, the splitting field $k_d$
is $\Fq(E[d])$, and $\Gamma_d=\gal(k_d/\Fq)$ is the cyclic group
generated by the action of the $q$ power Frobenius on the $d$ torsion
points. 

In the $\PGL_2$ tower, the geometric Galois group is
$G_d=\PGL_2(\F_{p^d})$, the splitting field $k_d$ is $\Fq\F_{p^d}$,
and $\Gamma_d=\gal(\Fq\F_{p^d}/\Fq)$ is the cyclic group generated by
the $q$-power Frobenius.

For a more general class of examples, let $K_d/K$ be any of the towers
above, and fix an extension $F/K$ which is linearly disjoint from each
$K_d$ over $K$.  Then the fields $F_d:=FK_d$ form a tower of
geometrically Galois extensions with the geometric Galois group of
$F_d/F$ isomorphic to that of $K_d/K$.  Note however, that in general
the genus of $F_d$ tends to infinity with $d$.

We next consider two group-theoretic results related to these towers,
both concerning the number of orbits of $\Gamma_d$ acting on $G_d$.  (As
motivation, we note that the orbits of $\Gamma_d$ on $G_d$ are in
bijection with the closed points of the scheme $\Gd$.)

To state the first result, we make a somewhat elaborate hypothesis
on the system of groups $G_d$ with their $\Gamma_d$ actions.  
\theoremstyle{definition}
\newtheorem{hyp}[subsubsection]{Hypothesis}

\begin{hyp}\label{hyp:big-orbits}
\mbox{}
\begin{enumerate}
\item There exists a function $\phi$ of positive integers such that
    $|G_d|=\sum_{e|d}\phi(e)$ for all $d$.
\item There a decomposition $G_d=\cup_{e|d} G_{d,e}'$ such that
  $|G_{d,e}'|=\phi(e)$.
\item The action of $\Gamma_d$ on $G_d$ respects the decomposition
  above, and the orbits of $\Gamma_d$ on $G_{d,e}'$ have cardinality
  $\ge C\log|G_e|$ for some constant $C$ independent of $d$ and $e$.
  \end{enumerate}
\end{hyp}

This hypothesis clearly implies that the splitting field $k_d$ has
degree $[k_d:k]=|\Gamma_d|\ge C\log|G_d|$.  It would be interesting to
know whether the converse holds.

\begin{lemma}\label{lemma:H-examples}
Hypothesis~\ref{hyp:big-orbits} is satisfied by the Kummer,
Artin-Schreier, division, and $\PGL_2$ towers.   
\end{lemma}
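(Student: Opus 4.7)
The plan is, for each of the four towers, to exhibit a natural decomposition of $G_d$ indexed by divisors $e\mid d$ that records a \emph{level} for each element (``exact order'' for Kummer and division, ``minimal field of definition'' for Artin--Schreier and $\PGL_2$), and then to estimate Frobenius orbit sizes either by a direct subfield argument or, in the division case, by Hasse--Weil.

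For the \emph{Kummer tower} $G_d=\mu_d(\Fqbar)$, set $G'_{d,e}$ equal to the primitive $e$-th roots of unity, so $\phi(e)=\varphi(e)$ (Euler's totient) and (1)--(2) reduce to the classical identity $d=\sum_{e\mid d}\varphi(e)$. The action $x\mapsto x^q$ preserves the order of an element, and the orbit of a primitive $e$-th root has size equal to the multiplicative order of $q$ modulo $e$; since $q^k\equiv 1\pmod{e}$ forces $e\le q^k$, this is at least $(\log e)/\log q$, so (3) follows with $C=1/\log q$.

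For the \emph{Artin--Schreier} and \emph{$\PGL_2$} towers, take $G_d=\F_{p^d}$ (respectively $\PGL_2(\F_{p^d})$) and let $G'_{d,e}$ consist of the elements whose minimal field of definition is $\F_{p^e}$: literally so in the Artin--Schreier case, and in the sense that the projective class admits a representative in $\GL_2(\F_{p^e})$ but in no smaller $\GL_2(\F_{p^{e'}})$ in the $\PGL_2$ case. These partitions yield (1) and (2), with $\phi(e)$ determined by M\"obius inversion. Writing $q=p^f$, the orbit of a level-$e$ element under the $q$-power Frobenius has size $e/\gcd(e,f)\ge e/f$, since such an element is fixed by $\Fr_q^k$ exactly when $e\mid kf$ (in the $\PGL_2$ case this uses that a relation $A^{(q^k)}=\lambda A$ permits a suitable scalar multiple $\mu A$ to have entries in $\F_{q^k}$, so the orbit condition is equivalent to ``lies in $\PGL_2(\F_{q^k})$''). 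Since $\log|G_e|$ equals $e\log p$ (Artin--Schreier) or $3e\log p+O(1)$ ($\PGL_2$), this yields (3) with a constant depending only on $q$.

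The \emph{division tower} is the main obstacle, as no subfield description of orbit sizes is directly available. Here $G_d=E[d]$ has $|G_d|=d^2$; take $G'_{d,e}$ to be the points of exact order $e$, so $\phi(e)=J_2(e)$ (Jordan's totient) and (1)--(2) reduce to the identity $d^2=\sum_{e\mid d}J_2(e)$. Frobenius preserves order, so only (3) is nontrivial. Suppose $P\in G'_{d,e}$ has Frobenius orbit of size $m$; then $\Fr_q^m P=P$, so $P\in E(\F_{q^m})$, and thus $e=\ord(P)$ divides $|E(\F_{q^m})|\le(q^{m/2}+1)^2$ by Hasse's bound. This forces $m\ge(\log e)/\log q - O(1)$, which combined with $\log|G_e|=2\log e$ yields (3) with a constant depending only on $q$, after absorbing the finitely many small values of $e$ (for which the trivial bound orbit size $\ge 1$ suffices) into the constant.
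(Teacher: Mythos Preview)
Your proof is correct and follows essentially the same approach as the paper: in each case you decompose $G_d$ by ``level'' (exact order for Kummer and division, minimal field of definition for Artin--Schreier and $\PGL_2$) and bound orbit sizes either via the multiplicative order of $q$ / the subfield criterion $e\mid fk$ or, in the division case, via the Hasse--Weil bound. The only cosmetic differences are that you name $\phi(e)$ explicitly as Euler's and Jordan's totients and give a slightly different (but equivalent) description of the field of definition in the $\PGL_2$ case, with a Hilbert~90--style justification for the orbit characterization.
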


\begin{proof}
  In the Kummer case, $G_d$ consists of the $d$-th roots of unity in
  $\Fqbar$, and we let $G_{d,e}'$ be those of order exactly $e$. Then
  $|G_{d,e}'|$ is independent of $d$, and we set
  $\phi(e)=|G_{d,e}'|$. The orbit of $\Gamma$ through
  $\zeta\in G_{d,e}'$ has size $f$ where $f$ is the smallest positive
  integer such that $\zeta^{q^f}=\zeta$.  Since $\zeta$ has order
  exactly $e$, this is the smallest $f$ such that $q^f\equiv1\pmod e$.
  Clearly this $f$ satisfies $f\ge\log e/\log q$ and this establishes
  Hypothesis~\ref{hyp:big-orbits}.

  In the Artin-Schreier case, $G_d$ is the additive group of
  $\F_{p^d}$, and we let $G_{d,e}'$ consists of those elements of
  $\F_{p^e}\subset\F_{p^d}$ which do not lie in any smaller extension
  of $\Fp$, i.e., $\alpha\in G_{d,e}'$ if and only if
  $\Fp(\alpha)=\F_{p^e}$.  We set $\phi(e)=|G_{d,e}'|$ (which is
  independent of $d$).  Since $\alpha^{p^f}\neq\alpha$ for $0<f<e$, it
  follows immediately that the orbit of the $q$-power Frobenius
  through $\alpha\in G_{d,e}'$ has size at least $e/(\log q/\log p)$,
  and this establishes Hypothesis~\ref{hyp:big-orbits}.

  In the division case, $G_d$ consists of the $\Fqbar$-points of $E$
  of order dividing $d$.  We let $G_{d,e}'$ be the subset of points of
  order exactly $e$, and $\phi(e)=|G_{d,e}'|$ (which is independent of
  $d$).  If $P\in G_{d,e}'$ and $\Fr_q^f(P)=P$, then
  $P\in E(\F_{q^f})$, and this implies that $|E(\F_{q^f})|\ge e$.  But
  the Weil bound implies that $|E(\F_{q^f})|\le(q^{f/2}+1)^2$ which in
  turn implies that $f\ge C\log e$ for some constant $C$ independent
  of $e$.

  In the $\PGL_2$ case, $G_d$ is $\PGL_2(\F_{p^d})$.  For $g\in G_d$,
  let $\Fp(g)$ be defined as follows: choose a representative of $g$
  in $\GL_2(\F_{p^d})$ one of whose entries is 1, and let $\Fp(g)$ be
  the extension of $\Fp$ generated by the other entries.  It is easy
  to see that $\Fp(g)$ is well-defined independent of the choice of
  representative and that $\Fr_p^f(g)=g$ if and only if $\Fr_p^f$
  fixes $\Fp(g)$.  We let $G_{d,e}'$ consists of those elements
  $g\in G_d$ with $\Fp(g)=\F_{p^e}$.  We set $\phi(e)=|G_{d,e}'|$
  (which is independent of $d$).  Since $\Fr_p^f(g)\neq g$ for
  $0<f<e$, it follows immediately that the orbit of the $q$-power
  Frobenius through $g\in G_{d,e}'$ has size at least
  $e/(\log q/\log p)$, and this establishes
  Hypothesis~\ref{hyp:big-orbits}.
\end{proof}

\begin{remss}
  A ``dual'' perspective makes Hypothesis~\ref{hyp:big-orbits} more
  transparent in the cases considered in Lemma~\ref{lemma:H-examples}.
  Namely, let $F=\Fq(\Curve)$ be the function field of a curve of
  genus 0 or 1 over $\Fq$.  (These are the cases where
  $\aut_{\Fqbar}(\Curve)$ is infinite.)  For each $d$, let $G_d$ be a
  subgroup of $\aut_{\Fqbar}(\Curve)$ which is stable under the
  $q$-power Frobenius, and let $\Gamma_d$ be the group of
  automorphisms of $G_d$ generated by Frobenius.  The quotient
  $(\Curve\times\Fqbar)/G_d$ has a canonical model over $\Fq$; let
  $F_d$ be its function field.  With this notation, the extension
  $F/F_d$ is geometrically Galois with group $(G_d,\Gamma_d)$.
  Suppose further that if $e|d$ then $G_e\subset G_d$, so that
  $F_d\subset F_e$.  Then it is natural to define $G_d'$ as the set of
  elements in $G_d$ which are not in $G_e$ for any divisor of $d$ with
  $e<d$.  Clearly $G_e'$ depends only on $e$, and the decompostion
  $G_d=\cup_{e|d}G_e'$ is evident.  All of the examples of
  Lemma~\ref{lemma:H-examples} can be recast in this form.
\end{remss}

The following lemma is modeled on \cite[Lemme~3.1.1]{GriffonThesis}.

\begin{lemma}\label{lemma:orbit-bound}
  Let $K_d/K$ be a tower of geometrically Galois extensions such that
  for all $d$, $|G_d|\ge d$ and such that
  Hypothesis~\ref{hyp:big-orbits} holds.  Then there is a constant
  $C_1$ such that the number of orbits of $\Gamma_d$ on $G_d$
  satisfies
$$\left|G_d/\Gamma_d\right|  \le C_1\frac{|G_d|}{\log|G_d|}$$
for all $d>1$.
\end{lemma}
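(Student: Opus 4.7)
The plan is to apply the orbit-size lower bound of Hypothesis~\ref{hyp:big-orbits}(3) piece by piece along the partition $G_d=\bigsqcup_{e\mid d}G'_{d,e}$, and then to argue that divisors $e$ with $|G_e|$ much smaller than $|G_d|$ contribute negligibly.

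First I would observe that Hypothesis~\ref{hyp:big-orbits}(3) implies that when $|G_e|\geq 2$, the number of $\Gamma_d$-orbits on $G'_{d,e}$ is at most $\phi(e)/(C\log|G_e|)$. Summing and treating the single divisor $e=1$ separately (which contributes at most $\phi(1)\leq 1$ in the worst case $|G_1|=1$), I obtain
$$|G_d/\Gamma_d|\;\leq\;1+\frac{1}{C}\sum_{\substack{e\mid d\\ |G_e|\geq 2}}\frac{\phi(e)}{\log|G_e|}.$$

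The remaining task is to bound this sum by $O(|G_d|/\log|G_d|)$, which I would do by splitting according to whether $|G_e|\geq|G_d|^{1/2}$ or not. In the ``large'' range, $\log|G_e|\geq \frac12\log|G_d|$, so using $\sum_{e\mid d}\phi(e)=|G_d|$ from Hypothesis~\ref{hyp:big-orbits}(1), this part is at most $2|G_d|/(C\log|G_d|)$. In the ``small'' range, the trivial bound $\phi(e)\leq|G_e|$ combined with the hypothesis $|G_e|\geq e$ forces $e\leq|G_d|^{1/2}$; combined with the standard divisor-function bound $\tau(d)=O(d^\epsilon)=O(|G_d|^\epsilon)$ (using $d\leq|G_d|$), the total contribution from such $e$ is at most $\tau(d)\cdot|G_d|^{1/2}/\log 2=O(|G_d|^{1/2+\epsilon})$, which is $o(|G_d|/\log|G_d|)$.

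Combining the two estimates yields $|G_d/\Gamma_d|\leq C_1|G_d|/\log|G_d|$ for a suitable constant $C_1$. The argument is largely routine once one recognizes that most of the mass $\phi(e)$ sits on divisors $e$ for which $|G_e|$ is of the same order as $|G_d|$; the only point that requires any care is the treatment of the small divisors, and this is precisely what the hypothesis $|G_d|\geq d$ (hence $|G_e|\geq e$) built into the statement is designed to handle.
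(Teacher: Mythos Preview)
Your proof is correct and follows the same strategy as the paper's: bound the orbit count on each $G'_{d,e}$ by $\phi(e)/(C\log|G_e|)$, then split the resulting sum over divisors $e\mid d$ according to whether $|G_e|$ is large or small relative to $|G_d|^{1/2}$, with the large part giving the main term $O(|G_d|/\log|G_d|)$ and the small part being negligible. The only difference is in handling the small piece: you invoke the divisor bound $\tau(d)=O(d^\epsilon)$ (together with $d\le|G_d|$), whereas the paper uses the cruder but more elementary estimate that the number of divisors $e\le x$ is at most $x$, introduces a continuous extension of $e\mapsto|G_e|$, and optimizes the splitting parameter $x$ so that $|G_x|^2=|G_d|$.
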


\begin{proof}
  Let $\psi(d)=|G_d|$, so that $\psi(d)=\sum_{e|d}\phi(e)$.  Extend
  $\psi$ to a function of real numbers which is continuous,
  increasing, and satisfies $\psi(x)\ge x$ for all $x$. By
  Hypothesis~\ref{hyp:big-orbits}, for all $d>1$ the number of orbits
  of $\Gamma_d$ on $G_{d,e}'$ satisfies
$$\left|G_{d,e}'/\Gamma_d\right|  \le C^{-1}\frac{\phi(e)}{\log\psi(e)}.$$
Let $x>1$ be a parameter to be chosen later.  We have
\begin{align*}
\left|G_d/\Gamma_d\right|  
&\le C_2\sum_{1<e|d}\frac{\phi(e)}{\log\psi(e)}
&\text{($C_2$ to compensate for omitting $e=1$)}\\
&= C_2\sum_{\substack{1<e|d\\ e\le x}}\frac{\phi(e)}{\log\psi(e)}
   +C_2\sum_{\substack{{1<e|d}\\{e> x}}}\frac{\phi(e)}{\log\psi(e)}\\
&\le C_2\sum_{\substack{{1<e|d}\\{e\le x}}}\frac{\phi(e)}{\log\psi(e)}
   +C_2\frac{\psi(d)}{\log\psi(x)}
&\text{($\sum\phi(e)=\psi(d)$ and $\psi$ increasing)}\\
&\le C_2\sum_{\substack{{1<e|d}\\{e\le x}}}\frac{\psi(e)}{\log\psi(e)}
   +C_2\frac{\psi(d)}{\log\psi(x)}
&\text{($\phi(e)\le\psi(e)$)}\\
&\le C_3\frac{\psi(x)}{\log\psi(x)}\sum_{\substack{{1<e|d}\\{e\le x}}}1
   +C_2\frac{\psi(d)}{\log\psi(x)}
&\text{\stackunder{($x\mapsto\psi(x)\mapsto\psi(x)/\log\psi(x)$}
{$\qquad$ increasing for $x>2.72$)}}\\
\displaybreak
&\le C_3\frac{x\psi(x)}{\log\psi(x)}
   +C_2\frac{\psi(d)}{\log\psi(x)}\\
&\le C_3\frac{\psi(x)^2}{\log\psi(x)}
   +C_2\frac{\psi(d)}{\log\psi(x)}
&\text{($\psi(x)\ge x$)}
\end{align*}
Now since $\psi$ is increasing and continuous, we may choose $x$ so that
$\psi(x)^2=\psi(d)$, and for this choice we have
$$\left|G_d/\Gamma_d\right|  \le
\left(2C_3+2C_2\right)\frac{\psi(d)}{\log\psi(d)}.$$
Thus setting $C_1=2C_3+2C_2$ completes the proof.
\end{proof}

We now consider the set of orbits of $\Gamma$ on a homogeneous space
for $G$.

\begin{lemma}\label{lemma:orbit-inequality}
  Let $G$ be a finite group and let $T$ be a principal homogeneous
  space for $G$.  Let $\Gamma$ be a group acting on $G$ \textup{(}by
  group automorphisms\textup{)} and on $T$ \textup{(}by
  permutations\textup{)}, and suppose that the actions of $\Gamma$ on
  $G$ and $T$ are compatible with the action of $G$ on $T$ (i.e., for
  all $\gamma\in\Gamma$, $g\in G$, and $t\in T$,
  $\gamma(gt)=\gamma(g)\gamma(t)$. Then
$$\left|T/\Gamma\right|\le\left|G/\Gamma\right|.$$
\end{lemma}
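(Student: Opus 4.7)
The plan is to prove the inequality via Burnside's orbit-counting formula, reducing the problem to a pointwise comparison of fixed-point counts. Since $G$ and $T$ are finite, both actions of $\Gamma$ factor through the (finite) image $\bar\Gamma$ of $\Gamma$ in $\aut(G)\times\mathrm{Sym}(T)$, and $\Gamma$- and $\bar\Gamma$-orbits coincide, so Burnside's formula gives
\[|T/\Gamma|=\frac{1}{|\bar\Gamma|}\sum_{\gamma\in\bar\Gamma}|T^\gamma|,\qquad|G/\Gamma|=\frac{1}{|\bar\Gamma|}\sum_{\gamma\in\bar\Gamma}|G^\gamma|.\]
So it suffices to show $|T^\gamma|\le|G^\gamma|$ for every $\gamma\in\bar\Gamma$.

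Fix $\gamma$. If $T^\gamma=\emptyset$, the inequality is trivial. Otherwise, pick $t^*\in T^\gamma$. Since $T$ is a principal homogeneous space, every $t\in T$ has a unique expression $t=gt^*$ with $g\in G$, and the compatibility hypothesis together with $\gamma(t^*)=t^*$ gives
\[\gamma(gt^*)=\gamma(g)\gamma(t^*)=\gamma(g)t^*.\]
Thus $gt^*$ is fixed by $\gamma$ if and only if $\gamma(g)=g$, using that the $G$-action on $T$ is free. Hence $g\mapsto gt^*$ is a bijection $G^\gamma\longisoto T^\gamma$, so in fact $|T^\gamma|=|G^\gamma|$ in this case. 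Summing over $\bar\Gamma$ yields the desired bound.

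There is no real obstacle in carrying this out; the only substantive observation is the dichotomy that $|T^\gamma|$ is either zero or equal to $|G^\gamma|$, according to whether $\gamma$ has a fixed point in $T$. One may reinterpret this cohomologically: fixing a base point $t_0\in T$ identifies $T$ with $G$ and turns the action of $\gamma\in\Gamma$ on $T$ into the twisted action $g\mapsto\gamma(g)h_\gamma$, where $h_\gamma\in G$ is defined by $\gamma(t_0)=h_\gamma t_0$; then $\gamma\mapsto h_\gamma$ is a $1$-cocycle of $\Gamma$ with values in $G$, and $T^\gamma$ is non-empty precisely when the restriction of this cocycle to $\langle\gamma\rangle$ is a coboundary. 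This perspective is aesthetically pleasing but strictly more than is needed for the proof.
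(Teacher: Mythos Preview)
Your proof is correct and follows essentially the same route as the paper: apply the orbit-counting lemma and show that for each $\gamma$ either $T^\gamma$ is empty or the map $g\mapsto gt^*$ gives a bijection $G^\gamma\to T^\gamma$. Your extra care in passing to the finite image $\bar\Gamma$ is a nice touch that the paper leaves implicit.
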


\begin{proof}
We use the orbit counting lemma:
$$\left|G/\Gamma\right|=\frac1{|\Gamma|}
\sum_{\gamma\in\Gamma}\left|G^\gamma\right|$$
where $G^\gamma$ denotes the set of fixed points of $\gamma$ acting on
$G$.  Similarly,
$$\left|T/\Gamma\right|=\frac1{|\Gamma|}
\sum_{\gamma\in\Gamma}\left|T^\gamma\right|$$
where $T^\gamma$ denotes the set of fixed points of $\gamma$ acting on
$G$.  We claim that if $T^\gamma$ is not empty, then it is a principal
homogeneous space for $G^\gamma$.  Indeed, it is clear that if
$g\in G^\gamma$ and $t\in T^\gamma$, then $gt\in T^\gamma$.
Conversely, if $t,t'\in T^\gamma$ and $g\in G$ is the unique element such
that $gt=t'$, then 
$$\gamma(g)t=\gamma(g)\gamma(t)=\gamma(gt)=\gamma(t')=t'=gt,$$
and so $\gamma(g)=g$.  Therefore, for each $\gamma\in\Gamma$,
$\left|T^\gamma\right|\le\left|G^\gamma\right|$.   We conclude that
$$|T/\Gamma|=\frac1{|\Gamma|}\sum_{\gamma\in \Gamma}|T^\gamma|\le
\frac1{|\Gamma|}\sum_{\gamma\in \Gamma}|G^\gamma|=|G/\Gamma|,$$
and this completes the proof of the lemma.
\end{proof}

\begin{remss}\label{rem:orbits}
In fact, the conclusion of the lemma holds when we assume only that
$G$ acts transitively on $T$.  To see this, it suffices to check that
for all $\gamma\in\Gamma$, $|T^\gamma|\le|G^\gamma|$.  If $T^\gamma$
is empty, there is nothing to prove.  If not, choose $t_0\in
T^\gamma$, let $G_0$ be the stabilizer of $t_0$ in $G$, 
and set
\begin{align*}
F(\gamma)&=\left\{g\in G\left|\gamma(gt_0)=gt_0\right.\right\}\\
&=\left\{g\in G\left|g^{-1}\gamma(g)\in G_0\right.\right\}.
\end{align*}
Then $G_0$ acts freely on $F(\gamma)$ by right multiplication, and the
quotient is $T^\gamma$.  Thus 
$|F(\gamma)|=|G_0|\cdot|T^\gamma|$.
On the other hand, $G^\gamma$ acts freely on $F(\gamma)$ by left
multiplication, and the quotient maps injectively to $G_0$ by $g\mapsto
g^{-1}\gamma(g)$.  Thus we find
$$|G_0|\cdot|T^\gamma|=|F(\gamma)|\le|G^\gamma|\cdot|G_0|$$
and so $|T^\gamma|\le|G^\gamma|$.  It is also clear that $G^\gamma
G_0\subset F(\gamma)$ so in all we have
$$\frac{|G^\gamma|}{|G_0^\gamma|}\le |T^\gamma|\le|G^\gamma|.$$
Simple examples show that both bounds are sharp.  Thanks to Alex Ryba
for the proofs in this remark and the preceding lemma.
\end{remss}


\begin{corss}\label{cor:place-bounds}
Suppose that $K_d$ is a tower of geometrically Galois extensions of $K$
such that $[K_d:K]\ge d$ and such that 
Hypothesis~\ref{hyp:big-orbits} holds.  Let $v$ be a place of $K$.
Then there is a constant $C_v$ depending only on $K$ and $v$ such that
for all $d$ the number of place of $K_d$ over $v$ is at most
$C_v[K_d:K]/\log[K_d:K]$.
\end{corss}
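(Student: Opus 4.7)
The plan is to bound the count of places of $K_d$ above $v$ by a constant times $|G_d/\Gamma_d|$ (where $\Gamma_d$ acts on $G_d$ by conjugation in $\tilde G := G_d \sdp \Gamma_d$), and then invoke Lemma~\ref{lemma:orbit-bound}.

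First, I identify places of $K_d$ above $v$ with the $\Gamma_d$-orbits on the transitive $\tilde G$-set $X$ of places of $L_d$ above $v$, and places of the constant field extension $k_dK$ above $v$ with the $G_d$-orbits on $X$. Since $k_dK/K$ is Galois with group $\Gamma_d$, these $G_d$-orbits form a single $\Gamma_d$-orbit. Fix one place $v_0$ of $k_dK$ above $v$ with stabilizer $\Gamma_0 \subset \Gamma_d$, and let $X_{v_0} \subset X$ be the corresponding $G_d$-orbit. A standard orbit-on-orbit argument then identifies the number of $\Gamma_d$-orbits on $X$ with $|\Gamma_0 \backslash X_{v_0}|$.

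Next, $X_{v_0}$ is a transitive $G_d$-set, and $\Gamma_0$ acts on both $G_d$ (by conjugation) and $X_{v_0}$ (by restriction) compatibly via the semi-direct product structure of $\tilde G$. Remarks~\ref{rem:orbits} then yields $|\Gamma_0 \backslash X_{v_0}| \le |\Gamma_0 \backslash G_d|$, and the elementary fact that each $\Gamma_d$-orbit on $G_d$ splits into at most $[\Gamma_d:\Gamma_0]$ $\Gamma_0$-orbits gives $|\Gamma_0 \backslash G_d| \le [\Gamma_d:\Gamma_0] \cdot |\Gamma_d \backslash G_d|$. The index $[\Gamma_d:\Gamma_0]$ equals the number of places of $k_dK$ above $v$, which is at most $[k_v:k] = \deg v$ because $k_dK/K$ is a constant field extension: the number of places above $v$ equals the number of $k_d$-algebra factors of $k_d \otimes_k k_v$, which is $[k_d \cap k_v : k] \le [k_v:k]$. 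Applying Lemma~\ref{lemma:orbit-bound} to bound $|\Gamma_d \backslash G_d|$ by $C_1 |G_d|/\log |G_d| = C_1 [K_d:K]/\log[K_d:K]$ then completes the argument with $C_v = C_1 \deg v$.

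The main technical check is the compatibility hypothesis needed to apply Remarks~\ref{rem:orbits}, namely that $\gamma(gt) = \gamma(g)\,\gamma(t)$ for $\gamma \in \Gamma_0$, $g \in G_d$, $t \in X_{v_0}$. This follows easily from the relation $\gamma g = \gamma(g) \gamma$ in $\tilde G$ together with the fact that $\Gamma_0$ preserves $X_{v_0}$ by its very definition as the stabilizer of $v_0$. The argument is uniform in $v$ and treats ramified and unramified places on equal footing, since ramification only shrinks the $G_d$-set $X_{v_0}$ without disturbing the bound.
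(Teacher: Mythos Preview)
Your proof is correct and uses the same two ingredients as the paper's proof: Remark~\ref{rem:orbits} to pass from orbits on a transitive $G_d$-set to orbits on $G_d$ itself, and Lemma~\ref{lemma:orbit-bound} to bound the latter.

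The only difference is organizational. The paper base-changes to $k_vK$, so that the relevant ``$\Gamma$'' is already $\gal(k_d/k_v)$ (your $\Gamma_0$), and then applies Lemma~\ref{lemma:orbit-bound} to the modified tower $k_vK_d/k_vK$; the dependence on $v$ is absorbed into the new constant produced by that lemma. You instead stay over $K$, apply Lemma~\ref{lemma:orbit-bound} to the original tower, and make the $v$-dependence explicit via the inequality $|\Gamma_0\backslash G_d|\le[\Gamma_d:\Gamma_0]\,|\Gamma_d\backslash G_d|$ together with $[\Gamma_d:\Gamma_0]\le\deg v$. Your route has two small advantages: it treats the ramified and unramified cases uniformly from the outset (the paper does the unramified case first via Lemma~\ref{lemma:orbit-inequality} and then invokes Remark~\ref{rem:orbits} for the general case), and it yields the explicit value $C_v=C_1\deg v$ rather than an unspecified constant depending on the base-changed tower.
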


\begin{proof}
  First assume that $v$ is unramified in $K_d$.  Let $T_d$ be the set
  of geometric points in the fiber over $v$ (i.e., in the fiber of the
  map of curves corresponding to the extension $K_d/K$) and let
  $G=G_d$ be the geometric Galois group of $K_d$ over $K$.  Let $k_v$
  be the residue field at $v$ and let $\Gamma_d=\gal(k_d/k_v)$, a
  subgroup of the Galois group of the splitting field of $G_d$.  Then
  $T_d$ is a principal homogeneous space for $G_d$, and $\Gamma_d$
  acts on $G_d$ and $T_d$ compatibly with the action of $G_d$ on
  $T_d$.  By Lemma~\ref{lemma:orbit-inequality},
  $\left|T_d/\Gamma_d\right|\le\left|G_d/\Gamma_d\right|$.  But
  $T_d/\Gamma_d$ is in bijection with the set of places of $K_d$ over
  $v$, and by Lemma~\ref{lemma:orbit-bound} (applied to the extensions
  $k_vK_d/k_vK$), there is a constant $C_v$ (depending on $v$ because
  the tower in question depends on $v$) such that
$$\left|G_d/\Gamma_d\right|\le C_v\frac{[K_d:K]}{\log[K_d:K]}.$$
This completes the proof of the corollary when $v$ is unramified in
$K_d$.  The general case follows from the same argument using
Remark~\ref{rem:orbits} in place of Lemma~\ref{lemma:orbit-inequality}.
\end{proof}

\subsection{Bounds on Tamagawa numbers (2)}\label{ss:T2}
We now turn to a second improvement on the Hindry-Pacheco bound on
Tamagawa numbers.  We consider towers of fields satisfying the
conclusions of Lemma~\ref{lemma:easy-place-bounds} and
Corollary~\ref{cor:place-bounds}, and we bound Tamagawa numbers using
only a mild (local) semi-stability hypothesis and no restriction on
the characteristic of the ground field.

Recall the line bundle $\omega_{A}$ associated to an abelian variety
$A$ defined in Section~\ref{ss:prelims}.

\begin{propss}\label{prop:general-tau-bound}
  Let $K$ be a global function field of characteristic $p$, let $Z$ be
  a finite set of places of $K$, and let $K_d$ be a tower of
  geometrically Galois extensions of $K$.  Assume that $[K_d:K]\ge d$
  and that for each place $v$ of $K$ there is a constant $C_v$ such
  that the number of places of $K_d$ dividing $v$ is
  $\le C_v[K_d:K]/\log[K_d:K]$ for all $d$.  Suppose that each $K_d/K$
  is unramified outside $Z$.  Let $A$ be an abelian variety over $K$
  which has semi-stable reduction at each place $v\in Z$ and such that
  $\deg\omega_A>0$.  Let $A_d=A\times_K K_d$.  Then
$$\tau(A_d)=O(H(A_d)^\epsilon)$$
for every $\epsilon>0$.
\end{propss}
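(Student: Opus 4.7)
The plan is to bound each local factor of $\tau(A_d)=\prod_w c_w$ according to the type of the underlying place $v$ of $K$, and to combine this with the essentially trivial calculation $\log H(A_d) = [K_d:K]\cdot\deg\omega_A\cdot\log q$. The latter is trivial because $A$ has semi-stable reduction at every $v\in Z$ and $K_d/K$ is unramified outside $Z$, so by the base change property of N\'eron models in the semi-stable (and unramified) cases we have $\omega_{A_d}=f^*\omega_A$ where $f\colon\Curve_d\to\Curve$ is the corresponding map of curves; combined with the hypothesis $\deg\omega_A>0$ this yields $\log H(A_d)=\Theta([K_d:K])$. So it suffices to show $\log\tau(A_d)=o([K_d:K])$.

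First I would dispose of the easy places. At any $v$ where $A$ has good reduction, every $w\mid v$ has $c_w=1$. At $v\notin Z$ where $A$ has bad reduction, the hypothesis forces $K_d/K$ to be unramified at $v$, so each $c_w$ is bounded by the order of the absolute component group of $A$ at $v$, a constant independent of $d$. The number of such $w$ is $\le C_v[K_d:K]/\log[K_d:K]$ by hypothesis, so the total contribution of these places to $\log\tau(A_d)$ is $O([K_d:K]/\log[K_d:K])=o([K_d:K])$.

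The real work is at places $v\in Z$, where $A$ is semi-stable but the base change may be wildly ramified. The key ingredient is the monodromy pairing description of the component group of a semi-stable abelian variety, which gives a bound $c_w\le e_w^{\dim(A)}|\Phi_v|$, where $e_w$ is the ramification index of $w\mid v$ and $|\Phi_v|$ is the absolute component group of $A$ at $v$. Writing $D=[K_d:K]$ and letting $N_v$ be the number of places of $K_d$ over $v$, the identity $\sum_{w\mid v}e_wf_w=D$ yields $\sum_{w\mid v}e_w\le D$; concavity of $\log$ combined with $N_v\le C_vD/\log D$ then gives
\[
\sum_{w\mid v}\log e_w\;\le\;N_v\log(D/N_v)\;\le\;C_v\frac{D}{\log D}\bigl(\log\log D+O(1)\bigr)\;=\;O\!\left(\frac{D\log\log D}{\log D}\right).
\]
Summing $\log c_w$ over $w\mid v$ and then over the finite set $Z$ yields $\log\tau(A_d)=o(D)=o(\log H(A_d))$, which is exactly the required $\tau(A_d)=O(H(A_d)^\epsilon)$ for every $\epsilon>0$.

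The step I expect to be the main obstacle is the inequality $c_w\le e_w^{\dim(A)}|\Phi_v|$ in the non-split semi-stable case. In the split toric case it is immediate from the fact that passing to an extension of ramification $e$ multiplies the monodromy form by $e$, so the component group $X/eMX$ grows by at most a factor $e^{t_v}\le e^{\dim(A)}$; in general one reduces to this by first passing to the unramified extension that splits the toric part of the identity component of the N\'eron model at $v$, a manoeuvre that does not affect the ramification indices $e_w$. The remainder is bookkeeping.
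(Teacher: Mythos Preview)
Your argument is correct and follows the same overall plan as the paper: compute $H(A_d)$ via $\omega_{A_d}=f^*\omega_A$, dispose of $v\notin Z$ using that $c_w$ is bounded by the geometric component group and the place-count hypothesis, and for $v\in Z$ use the bound $c_w\le\overline c_v\,e_w^{\dim A}$ in the semi-stable case.

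Two differences are worth recording. For the bound on $c_w$ the paper simply invokes \cite[Thm.~5.7]{HalleNicaise10}; your monodromy-pairing sketch is correct in outline and yields the same inequality, but citing Halle--Nicaise closes what you flagged as ``the main obstacle'' without further work. More interestingly, at the summation step the paper uses the geometrically Galois hypothesis to conclude that all $e_w$ over a given $v$ are equal to a common $r$, and then splits into two cases according to whether $r$ is small or large compared to $(\log D)/C_v$. Your Jensen/concavity bound $\sum_{w\mid v}\log e_w\le N_v\log(D/N_v)$ handles the general case in one line and never uses equal ramification indices. Both routes produce the same final estimate $\log\tau(A_d)=O(D\log\log D/\log D)$, but your version shows that the geometrically Galois hypothesis in the statement is actually superfluous once the place-count bound is taken as given.
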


\begin{proof}
  To lighten notation, let $D=[K_d:K]$.  Since $A$ has semi-stable
  reduction at the possibly ramified places $Z$, we have
  $\deg\omega_{A_d}=D\deg\omega_A\ge D$, so it will suffice to
  show that
$$\tau(A_d)=O\left(q^{D\epsilon}\right)$$ 
for all $\epsilon>0$.

For each place $v$ of $K$, let $c_v$ be the order of the group of
connected components of the special fiber of the N\'eron model of $A$
at $v$.  Let $\overline{c}_v$ be the order of the group of connected
components of the special fiber of the N\'eron model of $A$ at a place
of $\Fqbar K$ over $v$.  (The order is independent of the choice.)
Since the former group is a subgroup of the latter, $c_v$ divides
$\overline{c}_v$.  If $w$ is a place of $K_d$ over $v$, let $c_w$ be
the order of the component group of the N\'eron model of $A$ over
$K_d$.

Consider a place $v\not\in Z$.  Since $K_d/K$ is unramified at $v$,
$c_w$ divides $\overline{c}_v$.  By assumption,
the number of places $w$ over $v$ is bounded by $C_vD/\log D$.  Since
there are only finitely many places of $K$ where $A$ has bad
reduction, we may set
$C_1=\max\{\overline{c}_v^{C_v}|v\text{ of bad reduction}\}$ and
conclude that
$$\prod_{w|v\not\in Z}c_w\le\prod_{v\not\in
  Z}\overline{c}_v^{C_vD/\log D}\le C_1^{D/\log D}.$$

Now consider a place $v\in Z$, let $w$ be a place of $K_d$ over $v$,
and let $r$ be the ramification index of $w$ over $v$.  Since $K_d/K$
is geometrically Galois, $r$ depends only on $v$.  Since $A$ is
assumed to have semi-stable reduction, \cite[Thm~5.7]{HalleNicaise10}
implies that 
$$c_w\le \overline{c}_v r^{\dim(A)}.$$
Moreover, by assumption, the number of places of
$K_d$ over $v$ is at most $\min\{D/r,C_vD/\log D\}$ for some constant
$C_v$ which is independent of $D$.  If $r\le(\log D)/C_v$, we have
$$\prod_{w|v}c_w
\le\left(\overline{c}_v r^{\dim(A)}\right)^{C_vD/\log  D}
\le C_2^{D/(\log D/\log\log D)}$$
where $C_2$ depends only on $v$ and $A$.
If $r\ge(\log D)/C_v$, we have
$$\prod_{w|v}c_w\le\left(\overline{c}_v r^{\dim(A)}\right)^{D/r}
\le C_3^{D\log r/r}\le C_4^{D/(\log D/\log\log D)}$$
where again $C_3$ and $C_4$ depend only on $v$ and $A$.

Taking the product over all place $w$ of $K_d$ and setting
$C_5=\max\{C_2,C_4\}$, we have
$$\prod_{w}c_w
=\left(\prod_{w|v\not\in Z}c_w\right)\left(\prod_{w|v\in Z}c_w\right)
\le \left(C_1^{D/\log D}\right) \left(C_5^{D/(\log D/\log\log D)}\right)^{|Z|}$$
and this is clearly $O(q^{D\epsilon})$ as $d$ (and therefore $D$) tends
to infinity.
\end{proof}

We now give the main application of the results in this section.
Assume $K=\Fq(t)$ or $K=\Fq(E)$ for an elliptic curve $E$, and
consider a family of abelian varieties $A_d$ over $K$ associated to
the Kummer, Artin-Schreier, division, or $\PGL_2$ towers.  Recall the line
bundle $\omega=\omega_{A}$ defined in the Section~\ref{ss:prelims}.

\begin{corss}\label{cor:tau-av}
  As $d$ runs through positive integers prime to $p$ \textup{(}or all
  positive integers in the Artin-Schreier case\textup{)}, we have
$$\tau(A_d)=O(H(A_d)^\epsilon)$$
for every $\epsilon>0$ in any of the following situations:
\begin{enumerate}
\item $A$ is an abelian variety over $K=\Fq(t)$,  $A_d$ is
  the family associated to the Kummer tower,
  $\deg(\omega)>0$, and $A$ has semi-stable reduction at
  $t=0$ and $t=\infty$.
\item $A$ is an abelian variety over $K=\Fq(t)$, $A_d$ is the family
  associated to the Artin-Schreier tower, $\deg(\omega)>0$, and
  $A$ has semi-stable reduction at $t=\infty$.
\item $A$ is an abelian variety over $K=\Fq(E)$,  $A_d$ is
  the family associated to the division tower, and
  $\deg(\omega)>0$.
\item $A$ is an abelian variety over $K=\Fq(t)$, $A_d$ is the family
  associated to the $\PGL_2$ tower, $\deg(\omega)>0$, and
  $A$ has semi-stable reduction at $t=0$ and $t=1$.
\end{enumerate}
\end{corss}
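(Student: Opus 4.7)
The proof is a direct application of Proposition~\ref{prop:general-tau-bound} to each of the four families, so the plan is simply to verify its hypotheses uniformly. Those hypotheses split into two groups: conditions already built into the statement of the corollary (namely $\deg\omega_A>0$ and semi-stability of $A$ at each place in the ramification set of the tower), and the purely tower-theoretic conditions (i) $K_d/K$ is geometrically Galois with $[K_d:K]\ge d$; (ii) for each place $v$ of $K$, the number of places of $K_d$ above $v$ is $O([K_d:K]/\log[K_d:K])$; and (iii) $K_d/K$ is unramified outside a fixed finite set $Z$ of places.

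Section~\ref{ss:TGGE} already identifies the geometric Galois group of each tower, establishing the ``geometrically Galois'' part of (i). The degree inequality $[K_d:K]\ge d$ is immediate: we have $[K_d:K]=d$, $p^d$, $d^2$, $|\PGL_2(\F_{p^d})|$ in the Kummer, Artin-Schreier, division, and $\PGL_2$ cases respectively, and each of these is at least $d$. For (iii), the ramification loci are recorded in Section~\ref{ss:towers}: $Z=\{0,\infty\}$, $\{\infty\}$, $\emptyset$, and $\{0,1\}$ respectively. Matching these sets against the semi-stability assumptions of parts (1)--(4) of the corollary (there being no such assumption in part (3) precisely because the division tower is everywhere unramified) shows the semi-stability hypothesis of the proposition is met in each case.

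The one condition requiring a separate argument is (ii). The key observation is that in each of the four families the field $K_d$ is the function field of a curve of genus at most one: $K_d\cong\Fq(u)$ has genus zero for the Kummer, Artin-Schreier, and $\PGL_2$ towers, and $K_d\cong\Fq(E)$ has genus one for the division tower. Lemma~\ref{lemma:easy-place-bounds} therefore applies uniformly and supplies the required bound on the number of places of $K_d$ above each place of $K$. (As an alternative, one could invoke Corollary~\ref{cor:place-bounds} together with Lemma~\ref{lemma:H-examples}, which verifies Hypothesis~\ref{hyp:big-orbits} for all four towers.)

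Because the corollary is essentially a packaging of the preceding results, there is no serious mathematical obstacle; the work is the bookkeeping of matching hypotheses against the four cases. The one minor subtlety worth noting is that in the Kummer and $\PGL_2$ towers the ramification at places of $Z$ may be wild or only tame, but this is invisible to Proposition~\ref{prop:general-tau-bound}, which uses the ramification index only through the bound $\min\{[K_d:K]/r,\,C_v[K_d:K]/\log[K_d:K]\}$ on the number of places above $v$, and this bound is exactly what Lemma~\ref{lemma:easy-place-bounds} provides.
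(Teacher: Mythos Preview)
Your proof is correct and follows exactly the paper's approach: the paper's entire proof reads ``This is an immediate consequence of Proposition~\ref{prop:general-tau-bound} together with Lemma~\ref{lemma:easy-place-bounds},'' and you have simply spelled out the bookkeeping behind that sentence. One tiny quibble: in your closing remark, Lemma~\ref{lemma:easy-place-bounds} supplies only the $C_v[K_d:K]/\log[K_d:K]$ bound; the complementary bound $[K_d:K]/r$ used inside the proof of Proposition~\ref{prop:general-tau-bound} comes from the geometrically Galois structure itself, not from the lemma.
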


\begin{proof}
  This is an immediate consequence of
  Proposition~\ref{prop:general-tau-bound} together with
  Lemma~\ref{lemma:easy-place-bounds}.
\end{proof}

\subsection{Bounds on Tamagawa numbers (3)}\label{ss:T3}
\label{ss:tau-tame}
Our third improvement on the Hindry-Pacheco bound on Tamagawa numbers
is to note that we can get by with a weaker hypotheses in case (1) of
Corollary~\ref{cor:tau-av}.  Namely, we claim that the conclusion of
the corollary holds if there exists an integer $e$ relatively prime to
$p$ such that $A$ has semi-stable reduction at the places $u=0$ and
$u=\infty$ of $\Fq(u)$ where $u^e=t$.  (The corollary is the case
where $e=1$.)

  To check the claim, we first recall a result of Halle and Nicaise:
  Let $A$ be an abelian variety over $\Fpbar((t))$.  For $d$ prime
  to $p$, let $c_d$ denote the order of the component group of the
  special fiber of the N\'eron model of $A$ over $\Fpbar((t^{1/d}))$.
  Then \cite[Thm.~6.5]{HalleNicaise10} says that if we assume that $A$
  acquires semi-stable reduction over $\Fpbar((t^{1/e}))$ for some $e$
  prime to $p$, then the series
$$\sum_{(p,d)=1}c_dT^d$$
is a rational function in $T$ and $1/(T^j-1)$ for $j\ge1$.  This implies in
particular that the $c_d$ have at worst polynomial growth:
$c_d=O(d^N)$ for some $N$.

Applying this result in the context of part (1) of the lemma for the
places $t=0$ and $t=\infty$ of $\Fq(t)$, we see that
$$\tau(A_d)\le C_1^{d/\log d}d^{C_6}=O(H(A_d)^\epsilon)$$
for all $\epsilon>0$.

\subsection{Estimating $\deg(\omega_J)$}
When $A=J$ is the Jacobian of a curve $X$ over a function field,
computing $\deg(\omega_J)$ typically involves knowledge of a regular
model of $X$ (or a mildly singular model), information which is
sometimes difficult to obtain.  The following lemma allows us to
reduce to easy cases in two examples later in the paper.

\begin{lemma}\label{lemma:deg-omega}
  Let $K=k(\CC)$ be the function field of a curve over a perfect field
  $k$.  Let $X$ be a smooth, projective curve of genus $g$ over $K$.
  Let $J$ be the Jacobian of $X$, let $\pi:\XX\to\CC$ be a regular minimal
  model of $X$ over $K$, and let $\JJ\to\CC$ be the N\'eron model of
  $J$ with zero-section $z:\CC\to\JJ$.  Let 
$$\omega_J:=\bigwedge^g\left(z^*\Omega^1_{\JJ/\CC}\right)$$
be the Hodge bundle of $J$.

Let $K'$ be a finite, separable, geometric extension of $K$, and let
$\rho:\CC'\to\CC$ be the corresponding morphism of curves over $k$.
Let $R=(2g_{\CC'}-2)-[K':K](2g_\CC-2)$.  

Let $X'=X\times_KK'$ with Jacobian $J'$, models $\XX'$ and $\JJ'$, and
Hodge bundle $\omega_{J'}$.  Then
$$[K':K]\deg(\omega_{J})\le \deg(\omega_{J'})+gR.$$
\end{lemma}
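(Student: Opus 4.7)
The plan is to realize $\omega_J$ and $\omega_{J'}$ as determinants of pushforwards of relative dualizing sheaves from the minimal regular models, perform flat base change along $\rho$, and then bound the discrepancy between the fibered product $\XX\times_\CC\CC'$ and the minimal regular model $\XX'$. Under the standard identifications (following Raynaud, valid under cohomological flatness; any finite-length discrepancy only strengthens the direction of the inequality we seek)
$$\omega_J \cong \det\pi_*\omega_{\XX/\CC} \quad\text{and}\quad \omega_{J'}\cong\det\pi'_*\omega_{\XX'/\CC'},$$
it suffices to compare $\pi'_*\omega_{\XX'/\CC'}$ with $\rho^*\pi_*\omega_{\XX/\CC}$ as rank-$g$ sheaves on $\CC'$.

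Set $\YY:=\XX\times_\CC\CC'$ with projections $\rho_\XX\colon\YY\to\XX$ and $\pi_\YY\colon\YY\to\CC'$. Since $\rho$ is flat and $\pi$ is a flat proper family with Cohen-Macaulay fibers, $\omega_{\YY/\CC'}\cong\rho_\XX^*\omega_{\XX/\CC}$ and $\pi_{\YY,*}\omega_{\YY/\CC'}\cong\rho^*\pi_*\omega_{\XX/\CC}$. Choose a desingularization $\phi\colon\tilde\YY\to\YY$ and then contract the vertical $(-1)$-curves $\sigma\colon\tilde\YY\to\XX'$ producing the minimal regular model. Contractions of $(-1)$-curves preserve the pushforward of the relative dualizing sheaf, giving $\sigma_*\omega_{\tilde\YY/\CC'}=\omega_{\XX'/\CC'}$, while the Grauert-Riemenschneider-style inclusion for resolutions of normal surfaces yields $\phi_*\omega_{\tilde\YY/\CC'}\hookrightarrow\omega_{\YY/\CC'}$. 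Composing,
$$\pi'_*\omega_{\XX'/\CC'} = \pi_{\tilde\YY,*}\omega_{\tilde\YY/\CC'} \hookrightarrow \pi_{\YY,*}\omega_{\YY/\CC'} \cong \rho^*\pi_*\omega_{\XX/\CC},$$
and taking top exterior powers gives an injection $\omega_{J'}\hookrightarrow\rho^*\omega_J$ of line bundles with the same cokernel length.

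Let $\ell$ denote this cokernel length, so $\ell = [K':K]\deg\omega_J - \deg\omega_{J'}$, reducing the lemma to $\ell\le gR$. Write $\ell = \sum_w d_w\cdot \ell_w$, where $\ell_w$ is the local length of the cokernel above $w\in\CC'$. The sheaf $\omega_{\YY/\CC'}/\phi_*\omega_{\tilde\YY/\CC'}$ is supported at the singular points of $\YY$, which occur only in ramified fibers of $\rho$. At each such $w$ over $v\in\CC$, the key local estimate I will establish is
$$\ell_w \le g\cdot v_w(\mathfrak{d}_{w/v}),$$
where $\mathfrak{d}_{w/v}$ is the local different. Granting this, summing over $w$ and invoking Riemann-Hurwitz gives
$$\ell \le g\sum_w d_w\, v_w(\mathfrak{d}_{w/v}) = g\deg\mathfrak{d}_{\CC'/\CC} = gR.$$

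The main obstacle is the local bound $\ell_w\le g\cdot v_w(\mathfrak{d}_{w/v})$. In the benign case of semistable reduction plus tame ramification, $\YY$ has only cyclic quotient (rational) singularities and $\ell_w=0$, so the bound is vacuous. In general one must analyze how each of the $g$ local generators of $\pi_*\omega_{\XX/\CC}$ can acquire $\pi_w$-denominators when pulled back and resolved; this is controlled by the local change of variable $\pi_v = u\pi_w^{e_w}$ and its differential $d\pi_v = e_w u\pi_w^{e_w-1}d\pi_w + \pi_w^{e_w}du$, with each denominator bounded by $v_w(\mathfrak{d}_{w/v})$ via an elementary-divisor argument applied to the local cotangent submodule. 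Summing the bound over the $g$ generators yields the desired local estimate and completes the proof.
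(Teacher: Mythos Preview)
Your approach differs substantially from the paper's, and the crucial local estimate $\ell_w\le g\cdot v_w(\mathfrak d_{w/v})$ is left as a sketch rather than a proof.  The paragraph beginning ``this is controlled by the local change of variable\ldots'' is not an argument: you would need to track how each of the $g$ elementary divisors of $\pi'_*\omega_{\XX'/\CC'}$ inside $\rho^*\pi_*\omega_{\XX/\CC}$ is bounded by the different, and this depends on the fiber type of $\XX$ at $v$ as well as on the ramification---in particular on how the singularities of $\YY=\XX\times_\CC\CC'$ (which need not even be normal when $\XX$ has non-reduced fibers) are resolved.  Your base-change/resolution framework is sound and does produce the injection $\omega_{J'}\hookrightarrow\rho^*\omega_J$, but that only gives $\ell\ge 0$; the upper bound $\ell\le gR$ is the entire content of the lemma and you have not established it.

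The paper sidesteps all of this local analysis with one idea: work with \emph{absolute} $2$-forms rather than the relative dualizing sheaf.  Using the identification
\[
\omega_J\;\cong\;\bigwedge^g\!\left(\pi_*\Omega^2_{\XX/k}\right)\otimes\left(\Omega^1_{\CC/k}\right)^{-g}
\]
(and similarly over $\CC'$), the dominant rational map $\XX'\dashrightarrow\XX$ gives, by pullback of global $2$-forms on regular surfaces (which extend across codimension~$2$), a nonzero map
\[
\rho^*\bigwedge^g\pi_*\Omega^2_{\XX/k}\;\longrightarrow\;\bigwedge^g\pi'_*\Omega^2_{\XX'/k}.
\]
The Riemann--Hurwitz term then appears automatically from the factor $(\Omega^1_{\CC/k})^{-g}$, since $\rho^*\Omega^1_{\CC/k}\cong\Omega^1_{\CC'/k}(-D)$ with $\deg D=R$.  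Combining these yields a nonzero map $\rho^*\omega_J\to\omega_{J'}\otimes\OO_{\CC'}(gD)$, and taking degrees gives the inequality directly---no resolution of $\YY$, no local discrepancy estimate.  In effect, the paper's trick absorbs your unproven local bound into the functoriality of absolute top forms and the Riemann--Hurwitz formula on the base.
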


The point of the lemma is that we do not lose much information in
passing to a finite extension.

\begin{proof}[Proof of Lemma~\ref{lemma:deg-omega}]
    Since $\XX$ is regular and $\pi$ has a section, we have that
    \begin{align*}
\omega_J&\cong\bigwedge^g\left(\pi_*\Omega^2_{\XX/k}
\tensor\left(\Omega^1_{\CC/k}\right)^{-1}\right)\\ 
&\cong\left(\bigwedge^g\pi_*\Omega^2_{\XX/k}\right)
\tensor\left(\Omega^1_{\CC/k}\right)^{\tensor^{-g}} 
    \end{align*}
    and similarly for $\omega_{J'}$.  This argument, which uses
    results on N\'eron models and relative duality, is given in the
    proof of \cite[Prop.~7.4]{AIMgroup}.

There is a dominant rational map $\XX'\ratto\XX$ covering $\rho$, so
pull back of 2-forms induces a non-zero morphism of sheaves
$$\rho^*\bigwedge^g\left(\pi_*\Omega^2_{\XX/k}\right)
\to\bigwedge^g\left(\pi'_*\Omega^2_{\XX'/k}\right).$$

By Riemann-Hurwitz, we have 
$$\rho^*\left(\Omega^1_{\CC/K}\right)\cong\Omega^1_{\CC'/k}\tensor\OO_{\CC'}(D)$$
where $D$ is a divisor on $\CC'$ of degree $R$.

Thus we get a non-zero morphism of sheaves
$$\rho^*(\omega_J)\to\omega_{J'}\tensor\OO_{\CC'}(gD).$$

Taking degrees, we conclude that
$$[K':K]\deg(\omega_J)\le\deg(\omega_{J'})+gR$$
as desired.
\end{proof}

\section{Integrality of the regulator and general lower bounds}
In this section, we give a lower bound on the regulator $\Reg(A)$ in
terms of Tamagawa numbers.  Combined with the bounds on $\tau(A)$
given in the preceding section, this yields a lower bound on the
Brauer-Siegel ratio.  A more general version of the same lower bound was
proven in \cite[Prop.~7.6]{HindryPacheco16}, but our proof is arguably
simpler and more uniform, and avoids a forward reference in
\cite{HindryPacheco16}.

\subsection{Integrality of regulators}
We continue with the standard notations introduced in
Section~\ref{s:prelims}.  In particular, $A$ is an abelian variety over
the function field $K=k(\Curve)$ with N\'eron model $\AA$ and dual
abelian variety $\hat A$.  We consider the height pairing
$A(K)\times\hat A(K)\to\Q$ (which we recall is the canonical
N\'eron-Tate height divided by $\log q$ and which takes values in
$\Q$) and its determinant $\Reg(A)$.

Our main goal in this section is to bound the denominator of the
regulator in terms of the orders $c_v$ of the component groups of
$\AA$ at places $v$ of $K$.  Recall that $\tau(A)=\prod_vc_v$.

\begin{propss}\label{prop:integrality}
The rational number
$$\tau(A)\Reg(A)$$
is an integer.
\end{propss}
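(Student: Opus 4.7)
The plan is to identify a finite-index subgroup of $A(K)$ on which the height pairing against every element of $\hat A(K)$ is automatically integer-valued, and then to finish by a change-of-basis computation. Let $A(K)^0\subseteq A(K)$ be the subgroup of points whose N\'eron section $s_P\colon\CC\to\AA$ factors through the open subscheme $\AA^0\subset\AA$ of fiberwise identity components.  Specialization at each place $v$ gives an injection $A(K)/A(K)^0\hookrightarrow\prod_v\Phi_v(k_v)$, where $\Phi_v$ is the component group at $v$, so $[A(K):A(K)^0]$ divides $\prod_v c_v=\tau(A)$; a fortiori, the index of the image of $A(K)^0$ in $A(K)/A(K)_{\mathrm{tor}}$ also divides $\tau(A)$.

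The key input is the classical theorem (originally due to Grothendieck; see the account in \cite{BoschLutkebohmertRaynaudNM}) that the Poincar\'e biextension $\mathcal{P}$ on $A\times_K\hat A$ extends canonically to a biextension of $\AA^0\times_\CC\hat\AA$ by $\G_m$---that is, it suffices to restrict \emph{one} of the two factors to the identity component.  Granting this, for $P\in A(K)^0$ with section $s_P$ and any $\hat P\in\hat A(K)$ with section $s_{\hat P}$, the product $(s_P,s_{\hat P})\colon\CC\to\AA^0\times_\CC\hat\AA$ pulls $\mathcal{P}$ back to a well-defined element of $\Pic(\CC)$, whose degree recovers the normalized N\'eron-Tate pairing $\<P,\hat P\>$ of Section~\ref{ss:prelims}.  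In particular, $\<P,\hat P\>\in\Z$ for every $P\in A(K)^0$ and every $\hat P\in\hat A(K)$.

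To conclude, fix bases $P_1',\dots,P_r'$ of the image of $A(K)^0$ in $A(K)/A(K)_{\mathrm{tor}}$ and $\hat P_1,\dots,\hat P_r$ of $\hat A(K)/\hat A(K)_{\mathrm{tor}}$.  Let $n$ be the index of the first subgroup in $A(K)/A(K)_{\mathrm{tor}}$; by the first paragraph $n\mid\tau(A)$.  The matrix $(\<P_i',\hat P_j\>)_{1\le i,j\le r}$ has integer entries by the previous paragraph, and the standard change-of-basis identity gives $\det(\<P_i',\hat P_j\>)=\pm n\Reg(A)$.  Therefore $n\Reg(A)\in\Z$, and a fortiori $\tau(A)\Reg(A)\in\Z$.

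The only non-elementary ingredient is the one-sided extension of the Poincar\'e biextension to $\AA^0\times_\CC\hat\AA$, which I expect to be the main obstacle only in the sense that it requires invoking the biextension machinery; a reader who prefers to avoid this could instead decompose $\<P,\hat P\>$ into local N\'eron symbols and verify directly that each local contribution lies in $\Z$ once $P$ meets only the identity component at $v$.  Everything else in the argument is linear algebra and the well-known specialization sequence for N\'eron models.
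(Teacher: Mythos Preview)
Your proposal is correct, and its overall architecture---pass to the finite-index subgroup $A(K)^0$ of points meeting every fiberwise identity component, show the pairing is $\Z$-valued on $A(K)^0\times\hat A(K)$, then finish by an index/determinant computation---is exactly the paper's.  The difference is in how integrality on $A(K)^0\times\hat A(K)$ is established.  You invoke the one-sided extension of the Poincar\'e biextension to $\AA^0\times_\CC\hat\AA$ and read off the height as the degree of a line bundle on $\CC$; this is correct (it is the Mazur--Tate/Moret-Bailly description of canonical heights), though the right reference is Moret-Bailly's \emph{Pinceaux de vari\'et\'es ab\'eliennes} (Ch.~II--III) or SGA~7, Exp.~IX, rather than \cite{BoschLutkebohmertRaynaudNM}.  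The paper instead works locally: it quotes Moret-Bailly's result that the local contribution at $v$ modulo $\Z$ is a pointed map of degree~$2$ on the component group $\Phi_v$, observes that for $\LL\in\Pic^0(A)$ this map is anti-symmetric, and proves an elementary lemma showing that an anti-symmetric pointed map of degree~$2$ satisfies $f(nx)=nf(x)$---so its exponent divides $c_v$ rather than the $2c_v$ one gets without anti-symmetry.  Your route is cleaner and more global but imports a heavier black box; the paper's is more hands-on and isolates the combinatorial fact that kills the spurious factor of~$2$.  Your closing alternative (decompose into local N\'eron symbols and check integrality place by place) is precisely the paper's route.
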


\begin{proof}
  We refer to \cite{HindrySilvermanDG} for general background on
  heights.  Given an invertible sheaf $\LL$ on $A$ and a point
  $x\in A(K)$, the general theory of heights on abelian varieties
  defines a rational number $h_\LL(x)$.  The canonical height pairing
  we are discussing is defined using this machine and the
  identification of $\hat A$ with $\Pic^0(A)$, the group of invertible
  sheaves algebraically equivalent to zero.  In other words, given
  $x\in A(K)$ and $y\in \hat A(K)$, we take $\LL$ to be the invertible
  sheaf associated to $y$ and define
$$\<x,y\>=h_\LL(x).$$

N\'eron's theory \cite{Neron65} decomposes the height $h_\LL(x)$ into
a sum of local terms indexed by the places of $K$.  In
\cite[III.1]{Moret-Bailly85}, Moret-Bailly proves that the contribution at a
place $v$ has denominator at most $2c_v$, and at most $c_v$ if $c_v$
is odd.  Moreover, he gives an example which shows that this is in
general best possible.  The upper bound on the denominator comes from
a property of ``pointed maps of degree 2,''
\cite[I.5.6]{Moret-Bailly85}, namely that a pointed map of degree 2
from a group of exponent $n$ has exponent at worst $2n$, or $n$ if $n$
is odd.  (These terms will be defined just below.)

In our situation there is slightly more structure: Since $\LL$ is
algebraically equivalent to zero, it is anti-symmetric, i.e., if
$[-1]$ is the inverse map on $A$, the $[-1]^*\LL\cong\LL^{-1}$.  The
functoriality in \cite[III.1.1]{Moret-Bailly85} then shows that the
corresponding pointed map of degree 2 is also anti-symmetric.  In the
next lemma, we define anti-symmetric pointed maps of degree 2, and we
prove that such a map from a group of exponent $c$ has exponent
dividing $c$.

Thus we see that $\<x,y\>$ is a sum of local terms, and the term at
a place $v$ has denominator at worst $c_v$.  It follows from the
bilinearity of the local terms $\<,\>_v$ that if $x$ passes through the identity
component at $v$, then $\<x,y\>_v$ is an integer.  We
define a ``reduced Mordell-Weil group''
$$A(K)^{red}:=\left\{\left.x\in A(K)\right|\text{$x$ meets the
    identity component of $\AA$ at every $v$}\right\},$$ 
and note that if $x\in A(K)^{red}$, then $\<x,y\>$
is an integer for every $y\in\hat A(K)$.  Since the index of
$A(K)^{red}$ in $A(K)$ divides $\tau(A)=\prod_vc_v$, we see that 
$$\Reg(A)\in\tau^{-1}\Z$$ 
as desired.  The proposition thus follows from the next lemma.
\end{proof}

\begin{lemma}
  Let $A$ and $G$ be abelian groups and let $f:A\to G$ be a function
  such that:
  \begin{enumerate}
  \item $f$ is a ``pointed map of degree 2,'' namely,
$$f(x_1+x_2+x_3)-f(x_1+x_2)-f(x_1+x_3)-f(x_2+x_3)
+f(x_1)+f(x_2)+f(x_3)=0$$
for all $x_1,x_2,x_3\in A$;
\item and $f$ is ``anti-symmetric,'' i.e., $f(-x)=-f(x)$ for all $x\in
  A$.
\end{enumerate}
Then for all integers $n$ and all $x\in A$, $f(nx)=nf(x)$.  In
particular, if $A$ has exponent $c$, then $cf=0$, i.e., $cf(x)=0$ for
all $x\in A$.
\end{lemma}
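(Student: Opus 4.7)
The plan is to derive everything by specializing the cubic identity in the definition of a pointed map of degree 2 and then feeding in antisymmetry. I will prove $f(nx)=nf(x)$ by establishing a two-term linear recurrence in $n$ with matching initial conditions.

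First I would set $x_1=x_2=x_3=0$ in the degree~2 identity; six of the seven terms collapse to $f(0)$ and the identity becomes $f(0)=0$. Next, with $x_1=x_2=x$ and $x_3=-x$, the identity reads
\[
f(x)-f(2x)-f(0)-f(0)+f(x)+f(x)+f(-x)=0,
\]
and invoking antisymmetry $f(-x)=-f(x)$ (together with $f(0)=0$) yields $f(2x)=2f(x)$. These two computations handle the base cases and already exhibit the role of antisymmetry: without it the identity forces only $f(2x)=2f(x)+2f(-x)\ldots$, i.e., the quadratic behavior $f(nx)\sim n^{2}f(x)$ typical of a N\'eron-Tate height, and it is precisely the antisymmetry coming from $[-1]^{*}\LL\cong\LL^{-1}$ that linearizes it.

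Now I would specialize the identity at $x_1=nx$, $x_2=x_3=x$ to obtain
\[
f((n+2)x)-2f((n+1)x)-f(2x)+f(nx)+2f(x)=0,
\]
and substituting $f(2x)=2f(x)$ reduces this to the clean linear recurrence
\[
f((n+2)x)=2f((n+1)x)-f(nx).
\]
Combined with the base values $f(0\cdot x)=0\cdot f(x)$ and $f(1\cdot x)=1\cdot f(x)$, an immediate induction gives $f(nx)=nf(x)$ for all $n\ge 0$. For $n<0$, antisymmetry of $f$ together with the case $n\ge 0$ yields $f(nx)=f(-(-n)x)=-f((-n)x)=-(-n)f(x)=nf(x)$, completing the proof of the first assertion.

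For the final clause, if $A$ has exponent $c$, then $cx=0$ for every $x\in A$, and applying what we just proved together with $f(0)=0$ gives $cf(x)=f(cx)=f(0)=0$, so $cf\equiv 0$. The main potential pitfall is only bookkeeping: the degree~2 identity is symmetric in its three variables but the choice $(nx,x,x)$ is what exposes a second-difference structure in $n$, and one must remember to use antisymmetry at the stage where $f(2x)$ appears in order to eliminate the inhomogeneous term in the recurrence. Everything else is a routine induction.
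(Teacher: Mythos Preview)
Your proof is correct and follows essentially the same approach as the paper: the same specializations $x_1=x_2=x_3=0$ and $x_1=x_2=x,\ x_3=-x$ give the base cases, and the specialization $x_1=nx,\ x_2=x_3=x$ (the paper uses $x_1=(n-1)x$, a harmless index shift) yields the inductive step, with antisymmetry handling negative $n$. The only cosmetic difference is that you isolate the second-order recurrence $f((n+2)x)=2f((n+1)x)-f(nx)$ before inducting, whereas the paper substitutes the inductive hypothesis directly.
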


\begin{proof}
  This follows from a simple inductive argument.  Clearly it suffices
  to treat the case $n\ge0$.  Taking $x_1=x_2=x_3=0$ in the pointed
  map property shows that $f(0)=0$.  Taking $x_1=x_2=x$ and $x_3=-x$
  then shows that $f(2x)=2f(x)$.  Finally, for $n\ge2$, taking $x_1=(n-1)x$,
  $x_2=x_3=x$, we have 
  \begin{align*}
   f((n+1)x)&= f((n-1)x+x+x)\\
&=f(nx)+f(nx)+f(2x)-f((n-1)x)-f(x)-f(x)\\
&=\left(n+n+2-(n-1)-1-1\right)f(x)\\
&=(n+1)x
  \end{align*}
  where we use induction to pass from the second displayed line to the
  third.  This yields the lemma.
\end{proof}

Without the anti-symmetry hypothesis, we would have
$$f(nx)=\frac{n(n+1)}2f(x)+\frac{n(n-1)}2f(-x),$$
by the same argument leading from the theorem of the cube
\cite[A.7.2.1]{HindrySilvermanDG} to Mumford's formula
\cite[A.7.2.5]{HindrySilvermanDG}.

\subsection{Further comments on integrality}
Let $\XX\to\CC$ be a fibered surface with generic fiber $X/K$ and
assume $X$ has a $K$-rational point.  Let $A$ be the Jacobian $J_X$.
In \cite[Prop.~7.2]{AIMgroup}, we proved that the rational number
\begin{equation}\label{eq:AIMintegrality}
\frac{|\NS(\XX)_{tor}|^2}{|A(K)_{tor}|^2}
\tau(A)\Reg(A)
\end{equation}
is an integer.  (By the factorization of birational maps into blow-ups
and the blow-up formula, $\NS(\XX)_{tor}$ is a birational invariant,
so the displayed quantity depends only on $X$ and $K$.)

Note that this bound on the denominator of $\Reg(A)$ is in general
stronger than that of Proposition~\ref{prop:integrality}.  For
example,  for the Jacobians studied in \cite{Ulmer14a} and \cite{AIMgroup},
\eqref{eq:AIMintegrality} is stronger than Proposition~\ref{prop:integrality}.

When $X$ has genus 1, it is known that
$\NS(\XX)_{tor}$ is trivial, so \eqref{eq:AIMintegrality} says that
\begin{equation}\label{eq:bad-integrality-A}
\frac{\tau(A)}{|A(K)_{tor}|^2}\Reg(A)\in\Z
\end{equation}
This bound (unlike \eqref{eq:AIMintegrality}) makes sense for general
abelian varieties, and it is reasonable to ask whether it holds in
general.  In the rest of this subsection, we sketch a proof that
\eqref{eq:bad-integrality-A} does not hold in general, not even for
Jacobians over $\Fq(t)$.

Let $\YY$ be a classical Enriques surface over $\Fp$.  It is known
that 
$$\NS(\YY)_{tor}\cong\Z/2\Z,\qquad\NS(\YY)/tor\cong\Z^{10},
\quad\text{and}\quad\det(\NS(\YY))=1;$$ 
see \cite{CossecDolgachevES}.

Next, embed $\YY$ in some projective space and take a Lefschetz
pencil, extending $\Fp$ to $\Fq$ if necessary.  Let $\XX$ be the
result of blowing up $\YY$ at the base points of the pencil.  Thus we
have $\pi:\XX\to\P^1$ over $\Fq$ whose fibers are
irreducible and either smooth or with single a node.  Moreover $\pi$ has
a section.  Choose such a section $O$ and a fiber $F$.  We have
intersection pairings $O^2=-1$, $F^2=0$, and $F.O=1$.  Also, the
N\'eron-Severi groups satisfy
$$\NS(\XX)=\NS(\YY)\oplus\<-1\>^d$$
where the direct sum is orthogonal, $\<-1\>$ stands for a
copy of $\Z$ whose generator has self-intersection $-1$, and $d$ is
the number of blow-ups.  Thus $\det(\NS(\XX))=1$.

Let $X/K=\Fq(t)$ be the generic fiber of $\pi$.  This is a smooth
curve with a $K$-rational point.  Let $A=J_X$ be its Jacobian.  We
will see shortly that $A$ is a counterexample to
\eqref{eq:bad-integrality-A}.

Since $\Pic^0(\XX)=\Pic^0(\YY)=0$, we have $\tr_{K/\Fq}(A)=0$.  The
Shioda-Tate theorem gives an exact sequence
$$0\to(\Z O+\Z F)\to\NS(\XX)\to A(K)\to 0.$$
Moreover, the fact that $\pi$ has irreducible fibers implies that
there is a splitting $A(K)\to\NS(\XX)$ which sends the canonical
height (divided by $\log q$) to the intersection pairing on
$\NS(\XX)$.  It follows from the intersection formulas for $O$ and
$F$ noted above that 
$$\Reg(A):=\det(A(K)/tor)=\det(NS(\XX)/tor)=1.$$

Since $\pi$ has irreducible fibers, $\tau(A)=1$.  The Shioda-Tate exact
sequence above shows that $A(K)_{tor}$ has order at least 2 (in fact,
exactly 2), so 
$$\frac{\tau(A)}{|A(K)_{tor}|^2}\Reg(A)=\frac14.$$
Thus \eqref{eq:bad-integrality-A}. fails for $A$.

\subsection{Lower bounds on Brauer-Siegel ratio from integrality}

We now state the main consequence for the Brauer-Siegel ratio of our
Proposition~\ref{prop:integrality}.

\begin{propss}\label{prop:BS=>0}
  Let $A_d$ be a family of abelian varieties over $K$ with
  $H(A_d)\to\infty$.  Assume that $\tau(A_d)=O(H(A_d)^\epsilon)$ for
  all $\epsilon>0$.  Then $\liminf \BS(A_d)\ge0$.
\end{propss}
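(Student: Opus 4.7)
My plan is to combine the integrality statement of Proposition~\ref{prop:integrality} with the hypothesis on Tamagawa numbers to get a lower bound on $\Reg(A_d)|\sha(A_d)|$ that grows no faster than $H(A_d)^\epsilon$ in denominator, for any $\epsilon>0$.

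First I would observe that for each $d$, the regulator $\Reg(A_d)$ is a positive rational number (being a non-vanishing determinant of a non-degenerate pairing, or the empty determinant $1$ if the rank is zero), and $|\sha(A_d)|\ge 1$ as a finite group. By Proposition~\ref{prop:integrality}, $\tau(A_d)\Reg(A_d)\in\Z_{>0}$, so $\tau(A_d)\Reg(A_d)\ge 1$, giving the trivial but crucial bound
\[
\Reg(A_d)|\sha(A_d)|\;\ge\;\Reg(A_d)\;\ge\;\frac{1}{\tau(A_d)}.
\]
Taking logarithms and dividing by $\log H(A_d)$ (which is positive and tends to $\infty$ by hypothesis), we obtain
\[
\BS(A_d)\;=\;\frac{\log\bigl(\Reg(A_d)|\sha(A_d)|\bigr)}{\log H(A_d)}\;\ge\;-\frac{\log\tau(A_d)}{\log H(A_d)}.
\]

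Next I would invoke the Tamagawa hypothesis. Given $\epsilon>0$, there exists a constant $C_\epsilon$ with $\tau(A_d)\le C_\epsilon H(A_d)^\epsilon$ for all $d$, so
\[
\frac{\log\tau(A_d)}{\log H(A_d)}\;\le\;\epsilon+\frac{\log C_\epsilon}{\log H(A_d)}.
\]
Since $\log H(A_d)\to\infty$, the right-hand side has limit $\epsilon$, and therefore $\liminf_d\BS(A_d)\ge-\epsilon$. As $\epsilon>0$ was arbitrary, this yields $\liminf_d\BS(A_d)\ge 0$, which is the desired conclusion.

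There is no real obstacle here: the whole argument is a one-line consequence of integrality of $\tau\Reg$ plus the sub-polynomial growth of $\tau$. The substantive content lives upstream, in Proposition~\ref{prop:integrality} (where the genuinely non-trivial input about anti-symmetric pointed maps of degree $2$ controls the denominator of the local height contributions) and in the Tamagawa bounds of Lemma~\ref{lemma:tau-ec}, Proposition~\ref{prop:general-tau-bound} and Corollary~\ref{cor:tau-av}, which supply the hypothesis $\tau(A_d)=O(H(A_d)^\epsilon)$ in the concrete families of interest.
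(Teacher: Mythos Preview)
Your proof is correct and essentially identical to the paper's own argument: both use $|\sha|\ge1$ together with Proposition~\ref{prop:integrality} to get $\Reg(A_d)|\sha(A_d)|\ge\tau(A_d)^{-1}$, then divide by $\log H(A_d)$ and invoke the Tamagawa hypothesis. Your write-up is in fact slightly more explicit about the $\epsilon$-limit step, but there is no difference in approach.
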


\begin{proof}
  Noting that $|\sha(A_d)|$ is a positive integer and is therefore
  $\ge1$, we have that
$$\log\left(|\sha(A_d)|\Reg(A_d)\right)\ge\log\left(\Reg(A_d)\right).$$
Proposition~\ref{prop:integrality}
implies that
$$\log\left(\Reg(A_d)\right)\ge-\log\left(\tau(A_d)\right).$$
It follows from the hypothesis $\tau(A_d)=O(H(A_d)^\epsilon)$ that
$$\BS(A_d)=\frac{\log\left(|\sha(A_d)|\Reg(A_d)\right)}{\log\left(H(A_d)\right)}
\ge\frac{-\log\left(\tau(A_d)\right)}{\log\left(H(A_d)\right)}$$
has $\liminf\ge0$ as $d\to\infty$.
\end{proof}

\begin{corss}
  If $A_d$ is a family of abelian varieties over $K$ such that
  $H(A_d)\to\infty$, then $\liminf \BS(A_d)\ge0$ in any of the
  following situations:
\begin{enumerate}
\item $\dim(A_d)=1$ for all $d$
\item $A$ is an abelian variety over $K=\Fq(t)$,  $A_d$ is
  the family associated to $A$ and the Kummer tower,
  and $A$ has semi-stable reduction at
  $t=0$ and $t=\infty$.
\item $A$ is an abelian variety over $K=\Fq(t)$,  $A_d$ is
  the family associated to $A$ and the Artin-Schreier tower,
  and $A$ has semi-stable reduction at
  $t=\infty$.
\item $A$ is an abelian variety over $K=\Fq(E)$,  and $A_d$ is
  the family associated to $A$ and the division tower.
\item $A$ is an abelian variety over $K=\Fq(t)$,  $A_d$ is
  the family associated to $A$ and the $\PGL_2$ tower,
  and $A$ has semi-stable reduction at
  $t=0$ and $t=1$.
\end{enumerate}
\end{corss}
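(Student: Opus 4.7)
The plan is to invoke Proposition~\ref{prop:BS=>0} in each of the five cases, which reduces the claim to the estimate $\tau(A_d)=O(H(A_d)^\epsilon)$ for every $\epsilon>0$. The preparation in the preceding subsections makes this a matter of matching hypotheses to results already in hand.

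Case (1), where every $A_d$ is an elliptic curve, is handled directly by Lemma~\ref{lemma:tau-ec}, which gives the Tamagawa bound via Ogg's formula with no restriction on characteristic or reduction type. Cases (2)--(5) are intended to follow from Corollary~\ref{cor:tau-av}. The only discrepancy is that Corollary~\ref{cor:tau-av} requires $\deg(\omega_A)>0$, whereas here we have instead the hypothesis $H(A_d)\to\infty$. To bridge this, I would observe that under the case-specific semi-stability assumptions at the ramified places (imposed in (2), (3), and (5), and automatic in (4) since the division tower is unramified), base change multiplies the degree of the Hodge bundle: $\deg(\omega_{A_d})=[K_d:K]\deg(\omega_A)$, so $H(A_d)=q^{[K_d:K]\deg(\omega_A)}$. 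Since $[K_d:K]\to\infty$ in each of these towers, the assumption $H(A_d)\to\infty$ is equivalent to $\deg(\omega_A)>0$, which puts us squarely in the setting of Corollary~\ref{cor:tau-av}.

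There is no real obstacle; the argument is a bookkeeping consolidation of Proposition~\ref{prop:BS=>0} with Lemma~\ref{lemma:tau-ec} and Corollary~\ref{cor:tau-av}. The only conceptually substantive point is the compatibility of the Hodge bundle with base change under semi-stable reduction, which is what lets us exchange the hypothesis $H(A_d)\to\infty$ for the hypothesis $\deg(\omega_A)>0$ in applying the earlier corollary. The genuinely nontrivial inputs---the orbit-counting bounds on places in geometrically Galois towers and the integrality estimate $\tau(A)\Reg(A)\in\Z$---have already been established in the previous subsections.
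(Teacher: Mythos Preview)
Your proposal is correct and matches the paper's own proof, which is the one-liner ``This is immediate from Lemma~\ref{lemma:tau-ec}, Corollary~\ref{cor:tau-av}, and Proposition~\ref{prop:BS=>0}.'' Your additional observation---that under the semi-stability hypotheses $\deg(\omega_{A_d})=[K_d:K]\deg(\omega_A)$, so $H(A_d)\to\infty$ forces $\deg(\omega_A)>0$---is a useful clarification that the paper leaves implicit.
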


\begin{proof}
  This is immediate from Lemma~\ref{lemma:tau-ec},
  Corollary~\ref{cor:tau-av}, and Proposition~\ref{prop:BS=>0}.
\end{proof}

\section{Lower bounds via the dimension of the Tate-Shafarevich
  functor}\label{s:dim-sha} 
In this section, we assume that the conjecture of Birch and
Swinnerton-Dyer (more precisely, the finiteness of $\sha(A)$) holds
for all abelian varieties considered.  Given an abelian variety $A$
over $K=\Fq(\CC)$, we will consider the functor from finite extensions
of $\Fq$ to groups given by
$$\Fqn\mapsto \sha(A\times_{\Fq(\CC)} \Fqn(\CC))$$
and we will show that the dimension of this functor (to be defined
below) gives information on the Brauer-Siegel ratio of $A$ over $K$.
This technical device will be extremely convenient as it allows us to
bound the Brauer-Siegel ratio without considering the regulator.

\begin{prop-def}\label{prop:sha-dim}
For each positive integer $n$, let $K_n:=\Fqn(\CC)$.  Given an abelian
variety $A$ over $K=K_1$, write $A/K_n$ for $A\times_KK_n$.  
Then the limit 
$$\lim_{n\to\infty}\frac{\log|\sha(A/K_n)[p^\infty]|}{\log(q^n)}$$
exists and is an integer.  We call it the \emph{dimension of $\sha(A)$}, and
denote it $\dim\sha(A)$.
\end{prop-def}

The proof of the proposition will be given later in this section,
after giving a formula for $\dim\sha(A)$ in terms of the $L$-function
of $A$.  We give a justification of the terminology ``dimension'' in
Remark~\ref{rems:dim} below.

In order to state a formula for $\dim\sha(A)$, we recall some
well-known results on the $L$-function $L(A,s)$.  Let
$A_0=\tr_{K/k}(A)$ be the $K/k$-trace of $A$ (where as usual $k=\Fq$),
an abelian variety over $k$.  
(See \cite{Conrad06} for a modern account of the $K/k$-trace.)  Then
$L(A,s)$ has the form
$$L(A,s)=\frac{P(q^{-s})}{Q(q^{-s})Q(q^{1-s})}$$
where $P$ and $Q$ are polynomials with the following properties:
\begin{enumerate}
\item $P(T)=\prod_i(1-\alpha_iT)$ where the $\alpha_i$ are Weil
  numbers of size $q$.
\item $Q$ has degree $2\dim(A_0)$ and $Q(T)=\prod_j(1-\beta_jT)$ where the
  $\beta_j$ are the Weil numbers of size $q^{1/2}$ associated to
  $A_0$.  (In other words, they are the eigenvalues of 
  Frobenius on $H^1(A_0\times\Fqbar,\Ql)$ for any $\ell\neq p$.)
\item $Q(1)=|A_0(\Fq)|$ and $Q(q^{-1})=q^{-d_0}|A_0(\Fq)|$.
\item Replacing $A$ with $A/K_n$ has the effect of replacing the
  $\alpha_i$ and $\beta_j$ with $\alpha_i^n$ and $\beta_j^n$. 
\end{enumerate}

Let $F$ be the number field generated by the $\alpha_i$, and choose a
prime of $F$ over $p$ with associated valuation $v$ normalized so that
$v(q)=1$.  We define the \emph{slopes} associated to $A$ to be the
rational numbers $\lambda_i=v(\alpha_i)$.  It is known that the set of
slopes (with multiplicities) is independent of the choice of $v$, that
$0\le\lambda_i\le2$ for all $i$, and that the set of slopes is
invariant under $\lambda_i\mapsto2-\lambda_i$.

We can now state a formula for the dimension of $\sha(A)$.  

\begin{prop}\label{prop:sha-formula}
  $$\dim\sha(A)=
\deg(\omega)+\dim(A)(g_\CC-1)+\dim(A_0)
  -\sum_{\lambda_i<1}\left(1-\lambda_i\right).$$
The last sum is over indices $i$ such that $\lambda_i<1$.
\end{prop}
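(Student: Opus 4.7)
Under the standing hypothesis that $\sha(A/K_n)$ is finite for all $n$, the BSD formula rearranges to
\[
|\sha(A/K_n)|=\frac{L^*(A/K_n)\,H(A/K_n)\,q_n^{\dim(A)(g_\CC-1)}\,|A(K_n)_{tor}|\,|\hat A(K_n)_{tor}|}{\Reg(A/K_n)\,\tau(A/K_n)},
\]
where $q_n=q^n$. Writing $q=p^\ell$ and normalizing the $p$-adic valuation by $v_p(p)=1$, the defining limit for $\dim\sha(A)$ equals $\lim_n v_p(|\sha(A/K_n)|)/(n\ell)$. The plan is to apply $v_p$ to the display and evaluate the contribution of each factor asymptotically in $n$.

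\textbf{Easy terms.} The degree of $\omega$ is preserved by the unramified base change $\CC\times_{\Fq}\Fqn\to\CC$, so $v_p(H(A/K_n))=n\ell\deg\omega$ and $v_p(q_n^{\dim(A)(g_\CC-1)})=n\ell\dim(A)(g_\CC-1)$ exactly. Each place of $K$ has at most $n$ places of $K_n$ above it, and component groups of $\AA$ are unchanged under this constant-field extension, so $v_p(\tau(A/K_n))=O(\log n)$. The $p$-part of $A(K_n)_{tor}$ is bounded in terms of $|A_0(\Fqn)|=\prod_j(1-\beta_j^n)$, whose $v_p$ is $O(\log n)$ by the standard estimate $v_p(1-\zeta^n)=O(\log n)$ for $\zeta$ a non-root-of-unity $p$-adic unit. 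For the regulator, the ranks $r_n=\rk A(K_n)$ form a monotone sequence bounded by the rank of $A$ over $\Fqbar K$ and hence stabilize; combined with the integrality Proposition~\ref{prop:integrality} (giving denominator dividing $\tau$) and uniform upper bounds on the heights of a chosen basis of $A(K_n)/tor$, one gets $v_p(\Reg(A/K_n))=O(\log n)$.

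\textbf{The $L^*$ term.} This is the main computation. A Taylor expansion at $s=1$ of
\[
L(A/K_n,s)=\frac{\prod_i(1-\alpha_i^nq_n^{-s})}{\prod_j(1-\beta_j^nq_n^{-s})(1-\beta_j^nq_n^{1-s})}
\]
shows that each of the $r_n$ vanishing factors contributes exactly $\log q_n$ to the leading Taylor coefficient, cancelling the normalization $(\log q_n)^{r_n}$ and yielding
\[
L^*(A/K_n)=\frac{\prod_{\alpha_i^n\neq q^n}(1-\alpha_i^n/q^n)}{\prod_j(1-\beta_j^n/q^n)(1-\beta_j^n)}.
\]
For each surviving numerator factor, $v_p(1-\alpha_i^n/q^n)$ equals $-n\ell(1-\lambda_i)$ when $\lambda_i<1$ (using $v_p(1-x)=v_p(x)$ when $v_p(x)<0$), equals $0$ when $\lambda_i>1$, and is $O(\log n)$ when $\lambda_i=1$. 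For the denominator factors $1-\beta_j^n/q^n$, the slopes $\mu_j=v(\beta_j)\in[0,1]$ are symmetric under $\mu\mapsto 1-\mu$ with $\sum_j\mu_j=\dim(A_0)$, giving $\sum_j v_p(1-\beta_j^n/q^n)=-n\ell\dim(A_0)+O(\log n)$. The factors $(1-\beta_j^n)$ contribute $O(\log n)$. Summing, $v_p(L^*(A/K_n))=n\ell\bigl(\dim(A_0)-\sum_{\lambda_i<1}(1-\lambda_i)\bigr)+O(\log n)$.

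\textbf{Assembly.} Substituting into $v_p$ of the BSD formula, dividing by $n\ell$, and letting $n\to\infty$ produces the claimed formula; existence of the limit follows at once from the $O(\log n)$ error bounds. Integrality of the limit (as required by Proposition/Definition~\ref{prop:sha-dim}) reduces to integrality of $\sum_{\lambda_i<1}(1-\lambda_i)$, which equals $k-f(k)$ where $f$ is the Newton polygon of $P$ and $k$ is the number of slopes less than $1$; both $k$ and $f(k)$ are integers since $P\in\Z[T]$ has a Newton polygon with integer vertices (and $f$ is linear of integer slope $1$ on any intermediate segment). I expect the main technical obstacle to be the uniform estimate $v_p(\Reg(A/K_n))=O(\log n)$, which requires controlling the stabilized rank together with the heights of a chosen basis of $A(K_n)/tor$ as $n$ varies; this is the one step where one leans heaviest on Lang--N\'eron theory together with Proposition~\ref{prop:integrality}.
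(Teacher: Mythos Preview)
Your proof is correct and follows essentially the same strategy as the paper: apply $v_p$ to the BSD formula term by term, show the auxiliary factors contribute $o(n)$, and compute the $L^*$ contribution via the estimate $v_p(1-\gamma^n)=O(\log n)$ for a $p$-adic unit $\gamma$ (which the paper isolates as Lemma~\ref{lemma:ords}). The differences are organizational: the paper first treats the case $A_0=0$ and, in the general case, obtains the term $\dim(A_0)$ by cancelling the torsion factors against the $Q$-denominator of $L^*$ (using $\prod_j(1-(\beta_j/q)^n)(1-\beta_j^n)=q^{-n\dim(A_0)}|A_0(\Fqn)|^2$), whereas you extract $\dim(A_0)$ directly from the $Q$-denominator via the slope symmetry $\mu_j\mapsto 1-\mu_j$; also, the paper sharpens your $O(\log n)$ bounds on $v_p(\Reg)$ and $v_p(\tau)$ to $O(1)$ by observing that $A(K_n)/tor$ (hence $\Reg$) and the geometric component groups (hence $\tau$) take only finitely many values as $n$ varies, which sidesteps your appeal to Proposition~\ref{prop:integrality} and height bounds.
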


Before giving the proof of Propositions~\ref{prop:sha-dim} and
\ref{prop:sha-formula}, we record an elementary lemma on $p$-adic
numbers. 

\begin{lemma}\label{lemma:ords}
  Let $E$ be a finite extension of
  $\Qp$, let $\m$ be the maximal ideal of $E$, and let
  $\ord:E^\times\to\Z$ be the valuation of $E$.  If
  $\gamma\in E^\times$ has $\ord(\gamma)=0$ and is not a root of
  unity, then
$$\ord\left(1-\gamma^n\right)=O(\log n).$$
\end{lemma}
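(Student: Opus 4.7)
The plan is to use the $p$-adic logarithm to convert the multiplicative problem of controlling $\ord(1-\gamma^n) = \ord(\gamma^n - 1)$ into a purely additive one. Let $e$ denote the absolute ramification index of $E$ over $\Qp$, so that $\ord(p) = e$, and fix an integer $k > e/(p-1)$. Standard $p$-adic analysis (see, for instance, Serre's \emph{Local Fields}, Chapter XIV) then gives a group isomorphism
$$\log \colon 1 + \m^k \;\longrightarrow\; \m^k, \qquad \log(1+x) = x - \frac{x^2}{2} + \frac{x^3}{3} - \cdots,$$
which is an isometry in the sense that $\ord(\log(1+x)) = \ord(x)$ for every nonzero $x \in \m^k$.

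Next I would pass to a power of $\gamma$ lying in this subgroup. Since $\OO_E^\times / (1+\m^k)$ is a finite group, there is a positive integer $N$ with $\gamma^N \in 1 + \m^k$, and as $\gamma$ is not a root of unity, $\gamma^N \ne 1$. Therefore $\delta := \log(\gamma^N)$ is a nonzero element of $\m^k$ whose valuation $c := \ord(\delta)$ is a fixed integer depending only on $\gamma$ and the chosen $N$. Applying the isometry property and the identity $\log(\gamma^{Nn}) = n\delta$ valid on $1 + \m^k$, I obtain
$$\ord(\gamma^{Nn} - 1) \;=\; \ord(n\delta) \;=\; \ord(n) + c \;\le\; e\log_p(n) + c \;=\; O(\log n).$$

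To pass from $\gamma^{Nn}$ back to $\gamma^n$ I would invoke the factorization
$$\gamma^{Nn} - 1 \;=\; (\gamma^n - 1)\bigl(1 + \gamma^n + \gamma^{2n} + \cdots + \gamma^{(N-1)n}\bigr),$$
whose second factor lies in $\OO_E$ (being a sum of units) and hence has non-negative valuation. This yields $\ord(\gamma^n - 1) \le \ord(\gamma^{Nn} - 1) = O(\log n)$, completing the proof. The argument is essentially a packaging of classical $p$-adic analysis; no step presents a real obstacle, the single nontrivial input being the isometric isomorphism property of $\log$ on $1 + \m^k$ for $k > e/(p-1)$.
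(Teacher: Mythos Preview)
Your proof is correct and follows essentially the same approach as the paper: both reduce to a power of $\gamma$ lying deep enough in the $1$-units for the $p$-adic logarithm to be an isometry, and then use a factorization to pass back to $\gamma^n$. The paper organizes the reductions slightly differently (first making $\gamma$ a $1$-unit, then reducing to prime-power exponents $n=p^e$ via the same factorization trick you use, and finally invoking $\exp_p\circ\log_p$ for large $e$), but the key input---the isometry property of $\log_p$ on a suitable neighborhood of $1$---is identical, and your packaging is arguably cleaner.
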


\begin{proof}
  First we note that replacing $\gamma$ with $\gamma^a$, we may assume
  without loss of generality that $\gamma$ is a 1-unit, i.e., that
  $\ord(1-\gamma)>0$.  Next, if $n=p^em$ with $p\nodiv m$, then
$$\frac{1-\gamma^n}{1-\gamma^{p^e}}=
1+\gamma^{p^e}+\cdots+\gamma^{p^e(m-1)}
\equiv m \not\equiv0\pmod\m,$$
so $\ord(1-\gamma^n)=\ord(1-\gamma^{p^e})$.  Thus it suffices to
treat the case where $n=p^e$.  

We write $\exp_p$ and $\log_p$ for the $p$-adic exponential and
logarithm respectively.  (See, e.g., \cite[IV.1]{KoblitzPNPAZF} for
basic facts on these functions.)  For $y$ sufficiently close to 1
(namely for $|y-1|<|p^{1/(p-1)}|$), we have
$y=\exp_p\left(\log_p(y)\right)$.  Also, it follows from the power
series definition of $\exp_p$, the ultrametric property of $E$, and
the estimate $v_p(n!)\le n/(p-1)$ that if $x\neq0$ and $\ord(x)$ is
sufficiently large (e.g., $\ord(x)>2/(p-1)$ suffices), then
$$\ord\left(1-\exp_p(x)\right)=\ord(x).$$
Now if $e$ is sufficiently large, then $\gamma^{p^e}$ is close to 1,
and $x=\log_p(\gamma^{p^e})=p^e\log_p(\gamma)$ has large valuation and
is not zero, so we may apply the estimate above to deduce that
$$\ord\left(1-\gamma^{p^e}\right)=
\ord\left(1-\exp_p\left(\log_p\left(\gamma^{p^e}\right)\right)\right)=
\ord\left(\log_p\left(\gamma^{p^e}\right)\right)=
\ord\left(p^e\right)+\ord\left(\log_p(\gamma)\right).$$
This last quantity is a linear function of $e$ and thus a linear
function of $\log(p^e)$, and this proves our claim.
\end{proof}

\begin{proof}[Proof of Propositions~\ref{prop:sha-dim} and
\ref{prop:sha-formula}]
We use the leading coefficient part of the BSD conjecture and consider
the $p$-adic valuations of the elements of the formula.  For
simplicity, we first consider the case where $A_0:=\tr_{K/k}(A)=0$ and
then discuss the modifications needed to handle the general case at
the end.

As a first step, we establish that several factors in the BSD formula
do not contribute to the limit in Proposition~\ref{prop:sha-dim}.
More precisely, as $n$ varies, $\Reg(A/K_n)$,
$\tau(A/K_n)$, and $|A(K_n)_{tor}|\cdot|\hat A(K_n)_{tor}|$ are
bounded.  To see that $\Reg(A/K_n)$ is bounded, we note that it is
sensitive to the ground field $\Fqn$ only via the Mordell-Weil group
$A(K_n)/tor$.  In other words, if $A(K_n)/tor=A(K_m)/tor$, then
$\Reg(A/K_n)=\Reg(A/K_m)$.  This follows from the geometric nature of
the definition of $\Reg$ (i.e, its definition in terms of intersection
numbers).  From the Lang-N\'eron theorem on the finite generation of
$A(K\Fqbar)$, it follows that there are only finitely many
possibilities for $A(K_n)/tor$, so only finitely many possibilities
for $\Reg(A/K_n)$.  It also follows that $|A(K_n)_{tor}|$ and
$|\hat A(K_n)_{tor}|$ are bounded.  (Our use of the Lang-N\'eron
theorem here depends on the assumption that $A_0=0$.)  Similarly,
since the orders of the component groups of the fibers of the N\'eron
model of $A$ over $\Fqbar(\CC)$ are bounded, there are only finitely
possibilities for $\tau(A/K_n)$.
Finally, we note that the geometric quantities $\deg(\omega)$, $\dim(A)$,
and $g_\CC$ do not vary with $n$.  

Write $L^*(A/K_n)_p$ for the $p$-part of the rational number
$L^*(A/K_n)$.  Then the BSD formula and the remarks above imply that
\begin{align*}
\lim_{n\to\infty}\frac{\log\left|\sha(A/K_n)[p^\infty]\right|}{\log(q^n)}
&=\lim_{n\to\infty}
 \frac{\log\left(L^*(A/K_n)_pq^{n(\deg(\omega)+\dim(A)(g_\CC-1))}\right)}{\log(q^n)}\\
&=\lim_{n\to\infty}
 \frac{\log\left(L^*(A/K_n)_p\right)}{\log(q^n)}+\deg(\omega)+\dim(A)(g_\CC-1).
\end{align*}

Thus to complete the proof of the existence of the limit in
Proposition~\ref{prop:sha-dim} and the formula of
Proposition~\ref{prop:sha-formula} in the case $A_0=0$, we need only
check that
$$\lim_{n\to\infty}\frac{\log\left(L^*(A/K_n)_p\right)}{\log q^n}=
\sum_{\lambda_i<1}\left(\lambda_i-1\right).$$

Again under the assumption that $A_0=0$, we have
$$L^*(A/K_n)=\sideset{}{'}\prod_i\left(1-(\alpha_i/q)^n\right)$$
where $\prod'_i$ is the product over indices $i$ such that
$(\alpha_i/q)^n\neq 1$.  We view the right hand side as an element of
the number field $F$ introduced above to define the slopes, and we let
$E$ (as in Lemma~\ref{lemma:ords}) be the completion of $F$ at the
chosen prime of $F$ over $p$.  If $\lambda=v(\alpha_i)<1$, then
$$v\left(1-(\alpha_i/q)^n\right)
=v\left((\alpha_i/q)^n\right)
=n(\lambda_i-1),$$
whereas is $\lambda_i>1$, then
$$v\left(1-(\alpha_i/q)^n\right)=v(1)=0.$$
In the intermediate case where $\lambda_i=1$, there are two cases: if
$\alpha_i/q$ is not a root of unity, then Lemma~\ref{lemma:ords}
implies that
$$v\left(1-(\alpha_i/q)^n\right)=O(\log n).$$
If $\alpha_i/q$ is a root of unity, then there are only finitely many
possibilities for $v\left(1-(\alpha_i/q)^n\right)$ with
$(\alpha_i/q)^n\neq1$, and if $(\alpha_i/q)^n=1$, then it does not
contribute to $L^*(A/K_n)$.  Taking the product over $i$, we find that
$$\lim_{n\to\infty}\frac{\log\left(L^*(A/K_n)_p\right)}{\log q^n}=
\sum_{\lambda_i<1}\left(\lambda_i-1\right).$$
This establishes the formula in Proposition~\ref{prop:sha-formula}.

Since the break points of a Newton polygon have integer coordinates,
$\sum_{\lambda_i<1}\left(\lambda_i-1\right)$ is an
integer.  In the case $A_0=0$, we have thus established that the limit
in Proposition~\ref{prop:sha-dim} exists and is an integer,
and we have established the formula in
Proposition~\ref{prop:sha-formula} for the limit, i.e., for
$\dim\sha(A)$.

In case $A_0=\tr_{K/k}(A)$ is non-zero, the $L$-function is more
complicated, the torsion is not uniformly bounded, and we have to be
slightly more careful with the regulator.  Here are the details:  The
Lang-N\'eron theorem says that $A(K\Fqbar)/A_0(\Fqbar)$ is finitely
generated.  This implies that there are only finitely many
possibilities for $A(K_n)/A_0(\Fqn)$ and for the regulator (since
$A(K_n)/tor$ is a quotient of $A(K_n)/A_0(\Fqn)$).  Moreover, 
$$|A(K_n)_{tor}|=\left|\left(A(K_n)/A_0(\Fqn)\right)_{tor}\right|\cdot
|A_0(\Fqn)_{tor}|$$ 
and similarly for $\hat A$.  On the other hand, writing
$$L(A,s)=\frac{P(q^{-s})}{Q(q^{-s})Q(q^{1-s})}
=\frac{\prod_i(1-\alpha_iq^{-s})}
  {\prod_j(1-\beta_jq^{-s})(1-\beta_jq^{1-s})},$$
we have that 
$$L^*(A/K_n)=\frac{\prod_{(\alpha_i/q)^n\neq1}(1-(\alpha_i/q)^n)}
  {\prod_j(1-(\beta_j/q)^n)(1-\beta_j^n)}.$$
The denominator is 
$$q^{-n\dim(A_0)}|A_0(\Fqn)|^2=q^{-n\dim(A_0)}|A_0(\Fqn)|\cdot|\hat A_0(\Fqn)|$$
so the ratio
$$\frac{|A(K_n)_{tor}|\cdot|\hat A(K_n)_{tor}|}
  {\prod_j(1-(\beta_j/q)^n)(1-\beta_j^n)}
=q^{n\dim(A_0)}\left|\left(\vphantom{\hat A_n}A(K_n)/A_0(\Fqn)\right)_{tor}\right|
\cdot\left|\left(\hat A(K_n)/\hat A_0(\Fqn)\right)_{tor}\right|$$
is $q^{n\dim(A_0)}$ times a quantity which is bounded as $n$ varies.  It
then follows that 
$$\lim_{n\to\infty}\frac{\log\left(|A(K_n)_{tor}|\cdot|\hat
    A(K_n)_{tor}|\cdot L^*(A/K_n)_p\right)}{\log q^n}
=\dim(A_0)+\sum_{\lambda_i<1}\left(\lambda_i-1\right).$$
Therefore,
\begin{align*}
\lim_{n\to\infty}\frac{\log|\sha(A/K_n)|}{\log(q^n)}
&=\lim_{n\to\infty}
 \frac{\log\left(|A(K_n)_{tor}|\cdot|\hat
    A(K_n)_{tor}|\cdot L^*(A/K_n)
         q^{n(\deg(\omega)+\dim(A)(g_\CC-1))}\right)}{\log(q^n)}\\
&=
\dim(A_0)+\sum_{\lambda_i<1}\left(\lambda_i-1\right)
+\deg(\omega)+\dim(A)(g_\CC-1).
\end{align*}

This completes the proof of Propositions~\ref{prop:sha-dim} and
\ref{prop:sha-formula}. 
\end{proof}

\begin{rems}\label{rems:dim}
\mbox{}
  \begin{enumerate}
  \item In our applications, we will compute $\dim\sha(A)$ directly
    from its definition using crystalline methods.
    Proposition~\ref{prop:sha-formula} suggests that these methods
    will succeed exactly in those situations where one can compute the
    slopes $\lambda_i$, i.e., exactly in the cases where the methods
    of Hindry-Pacheco and Griffon succeed.
  \item We explain why the terminology ``dimension of $\sha(A)$'' is
    reasonable.  If $\Sel(A,p^m)$ denotes the Selmer group for
    multiplication by $p^m$ on $A$, then it is known that the functor
    $\Fqn\mapsto \Sel(A\times_K K\Fqn,p^m)$ from finite extensions of
    $\Fq$ to groups is represented by a group scheme which is an
    extension of an \'etale group scheme by a unipotent connected
    quasi-algebraic group $U[p^m]$, and the dimension of $U[p^m]$ is
    constant for large $m$ \cite{Artin74}.  (One may even replace
    ``finite extensions of $\Fq$'' with ``affine perfect
    $\Fq$-schemes,'' but unfortunately, not with ``general affine
    schemes.'')  Since the order of $A(K\Fqn)/p^mA(K\Fqn)$ is bounded
    for varying $n$, we may detect the dimension of $U[p^m]$ by
    computing the order of $\sha(A\times K\Fqn)[p^m]$ asymptotically
    as $n\to\infty$.  Thus $\dim\sha(A)$ as we have defined it in this
    paper is equal to the dimension of the unipotent quasi-algebraic
    group $U[p^m]$. (Note however that
    $\Fqn\mapsto\sha(A\times_k K\Fqn)[p^\infty]$ is not in general
    represented by a group scheme.)
  \item The formula in Proposition~\ref{prop:sha-formula} for
    $\dim\sha(A)$ is proven using the BSD formula.  Conversely, in case
    $A$ is a Jacobian, Milne \cite[\S7]{Milne75} computes the dimension of
    the group scheme mentioned in the previous remark, and this
    calculation is a key input into his proof of the leading
    coefficient formula of the BSD and Artin-Tate conjectures.  Our
    approach is thus somewhat ahistorical, but it is elementary
    (modulo the BSD conjecture) and completely general.
  \item In the case where $A$ is a Jacobian, the formula of
    Proposition~\ref{prop:sha-formula} is equivalent to the formula of
    Milne for the unipotent group scheme mentioned above, i.e., to the
    last displayed equation in \cite[\S7]{Milne75}.
  \item The proof of Proposition~\ref{prop:sha-formula} suggests that
    $\dim\sha(A)$ can be viewed as an analog of the Iwasawa
    $\mu$-invariant.
  \item If $K_n=\Fqn(\CC)$, $A$ is an abelian variety over $K_1$
    with $\deg(\omega_A)>0$, and we define the ``$p$-Brauer-Siegel ratio
    of $A$'' by
$$\BS_p(A):=\frac{\log (R|\sha(A)|)_p}{\log H(A)}$$
where $(x)_p$ denotes the $p$-part of the rational number $x$, then we
have
$$\lim_{n\to\infty}\BS_p(A/K_n)=\frac{\dim\sha(A)}{\deg(\omega_A)}.$$
This gives an interpretation of $\dim\sha$ in terms of a modified
Brauer-Siegel ratio.
  \end{enumerate}
\end{rems}

In situations where we can control $\tau(A)$, the following
proposition gives a tool to bound the Brauer-Siegel ratio of $A$ from
below.

\begin{prop}\label{prop:sha-BS-bound}
We have
$$\frac{\log\left(|\sha(A)|\Reg(A)\tau(A)\right)}{\log(q)}\ge
\dim\sha(A).$$  
\end{prop}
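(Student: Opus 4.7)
The plan is to pass to $p$-adic valuations and leverage the BSD formula together with the formula for $\dim\sha(A)$ given in Proposition~\ref{prop:sha-formula}. Let $F$ be the number field generated by the $\alpha_i$ and let $v$ be the valuation on $F$ used to define the slopes, normalized so that $v(q)=1$ and $\lambda_i=v(\alpha_i)$. By Proposition~\ref{prop:integrality}, $|\sha(A)|\Reg(A)\tau(A)$ is a positive integer, and for any positive integer $N$ one has $\log(N)/\log(q)\ge v(N)$; hence it suffices to prove
$$v\!\left(|\sha(A)|\Reg(A)\tau(A)\right)\ \ge\ \dim\sha(A).$$
Applying $v$ to the BSD identity $|\sha(A)|\Reg(A)\tau(A)=L^*(A)\,H(A)\,q^{\dim(A)(g_\CC-1)}\,|A(K)_{tor}|\,|\hat A(K)_{tor}|$ and using the formula of Proposition~\ref{prop:sha-formula}, the desired inequality reduces to
$$v(L^*(A))+v\!\left(|A(K)_{tor}|\,|\hat A(K)_{tor}|\right)\ \ge\ \dim(A_0)+\sum_{\lambda_i<1}(\lambda_i-1).$$

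Next I would estimate $v(L^*(A))$ via its factorization $L^*(A)=\prod'_i(1-\alpha_i/q)/[Q(1)Q(q^{-1})]$, arguing exactly as in the proof of Proposition~\ref{prop:sha-formula} but applied at the finite level rather than in the limit. One finds $v(1-\alpha_i/q)=\lambda_i-1$ when $\lambda_i<1$, $v(1-\alpha_i/q)=0$ when $\lambda_i>1$, and $v(1-\alpha_i/q)\ge 0$ when $\lambda_i=1$ and $\alpha_i\ne q$ (the last being precisely where passing to finite level spares us from invoking Lemma~\ref{lemma:ords}). For the denominator, the identities $Q(1)=|A_0(\Fq)|$ and $Q(q^{-1})=q^{-\dim(A_0)}|A_0(\Fq)|$ yield $v(Q(1)Q(q^{-1}))=-\dim(A_0)+2v(|A_0(\Fq)|)$, so
$$v(L^*(A))\ \ge\ \sum_{\lambda_i<1}(\lambda_i-1)+\dim(A_0)-2v(|A_0(\Fq)|).$$

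The main obstacle is the spurious term $-2v(|A_0(\Fq)|)$, and the last step of the plan is to cancel it against the torsion contribution. By rigidity of abelian varieties, $A_0(K)=A_0(\Fq)$, and the canonical map from the $K/k$-trace embeds this group into the finite group $A(K)_{tor}$ (as is used implicitly in the proof of Proposition~\ref{prop:sha-formula}); the dual statement is identical, and $|\hat A_0(\Fq)|=|A_0(\Fq)|$. Consequently $v(|A(K)_{tor}|\,|\hat A(K)_{tor}|)\ge 2v(|A_0(\Fq)|)$, the problematic terms cancel, and we obtain
$$v\!\left(|\sha(A)|\Reg(A)\tau(A)\right)\ \ge\ \deg\omega+\dim(A)(g_\CC-1)+\dim(A_0)+\sum_{\lambda_i<1}(\lambda_i-1)\ =\ \dim\sha(A),$$
completing the proof.
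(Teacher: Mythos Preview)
Your proof is correct and follows the same strategy as the paper (BSD, the slope formula of Proposition~\ref{prop:sha-formula}, and absorbing the trace contribution via $A_0(\Fq)\hookrightarrow A(K)_{tor}$); the only organizational difference is that you pass to the $p$-adic valuation via Proposition~\ref{prop:integrality}, whereas the paper stays with real inequalities and instead bounds the denominator of $L^*(A)$ on the $L$-function side. One small caveat: the aside ``by rigidity of abelian varieties, $A_0(K)=A_0(\Fq)$'' is not true in general (it fails whenever $\CC$ admits a nonconstant map to $A_0$), but you do not actually use it---all you need, and what you in fact use, is that the finite group $A_0(\Fq)$ injects into $A(K)_{tor}$, which holds because the trace morphism is injective on field-valued points.
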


\begin{proof}
We keep the notation of the proof of
Proposition~\ref{prop:sha-formula}.  In particular, $A_0$ denotes the
$K/k$ trace of $A$.  Using the BSD formula and estimating the
denominator of $L^*(A)$, we have
\begin{align*}
  |\sha(A)|\Reg(A)\tau(A)
&\ge \frac{|\sha(A)|\Reg(A)\tau(A)}
{\left|\left(\vphantom{\hat A_n}A(K_n)/A_0(\Fqn)\right)_{tor}\right|
\cdot\left|\left(\hat A(K_n)/\hat A_0(\Fqn)\right)_{tor}\right|}\\
&= |A_0(\Fqn)|\cdot|\hat
  A_0(\Fqn)|L^*(A)q^{\deg(\omega)+\dim(A)(g_\CC-1)}\\
&\ge q^{\deg(\omega)+\dim(A)(g_\CC-1)+\dim(A_0)-\sum(1-\lambda_i)}\\
&=q^{\dim\sha(A)}
\end{align*}
and this yields the proposition.
\end{proof}

\begin{rem}\label{rem:not-stupid}
  The bound of the Proposition is more subtle than it may seem at
  first: $\dim\sha(A)$ is defined in terms of the asymptotic growth of
  $\sha(A)$ as the ground field grows (i.e., replacing $\Fq$ with
  $\Fqn$), whereas the left hand side of the inequality concerns
  invariants over the given ground field $\Fq$.  In fact, a lower
  bound on the dimension of $\sha(A)$ is not sufficient to give
  non-trivial lower bounds on $\sha(A)$ itself.  (This is related to
  the non-representability of $\sha$ mentioned above.)  For example,
  if $E$ denotes the Legendre curve studied in \cite{Ulmer14a} over
  $K=\F_{p^{2f}}(t^{1/(p^f+1)})$, then \cite[Cor.~10.2]{Ulmer14a}
  shows that $\dim\sha(E)=(p^f-1)/2$, whereas
  \cite[Thm.~1.1]{Ulmer14d} shows that when $f\le2$, $\sha(E)$ is
  trivial.  This example also shows that the second inequality
  displayed above is sharp.
\end{rem}

Next, we state the result which is our main motivation for considering
$\dim\sha(A)$:

\begin{prop}\label{prop:sha-BS}
  Let $A_d$ be a family of abelian varieties over $K$ with
  $H(A_d)\to\infty$.  Assume that $\tau(A_d)=O(H(A_d)^\epsilon)$ for
  all $\epsilon>0$.  Then
$$\liminf_{d\to\infty} \BS(A_d)\ge   
  \liminf_{d\to\infty}\frac{\dim\sha(A_d)}{\deg(\omega_{A_d})}. $$
\end{prop}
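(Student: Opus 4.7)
The plan is to derive the desired inequality directly from Proposition~\ref{prop:sha-BS-bound}, by dividing through by $\log H(A_d)$ and controlling the Tamagawa contribution with the growth hypothesis.

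First, I would take logarithms of the inequality in Proposition~\ref{prop:sha-BS-bound}, which reads
$$\log\bigl(|\sha(A_d)|\Reg(A_d)\tau(A_d)\bigr) \ge \dim\sha(A_d)\cdot\log q,$$
and rewrite it as
$$\log\bigl(|\sha(A_d)|\Reg(A_d)\bigr) \ge \dim\sha(A_d)\cdot\log q - \log\tau(A_d).$$
Since $H(A_d) = q^{\deg\omega_{A_d}}$, dividing both sides by $\log H(A_d) = \deg(\omega_{A_d})\log q$ yields
$$\BS(A_d) \;\ge\; \frac{\dim\sha(A_d)}{\deg(\omega_{A_d})} \;-\; \frac{\log\tau(A_d)}{\log H(A_d)}.$$

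Next I would show that the error term tends to zero. By hypothesis, for every $\epsilon > 0$ there is a constant $C_\epsilon$ such that $\tau(A_d) \le C_\epsilon H(A_d)^\epsilon$, so
$$\frac{\log\tau(A_d)}{\log H(A_d)} \;\le\; \epsilon + \frac{\log C_\epsilon}{\log H(A_d)}.$$
Since $H(A_d)\to\infty$, the second summand goes to $0$, so $\limsup_d \log\tau(A_d)/\log H(A_d) \le \epsilon$ for every $\epsilon > 0$, which gives $\log\tau(A_d)/\log H(A_d) \to 0$.

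Taking the liminf on both sides of the displayed lower bound for $\BS(A_d)$ and using that the subtracted error term converges to $0$ then yields
$$\liminf_{d\to\infty}\BS(A_d)\;\ge\;\liminf_{d\to\infty}\frac{\dim\sha(A_d)}{\deg(\omega_{A_d})},$$
which is the claim. There is no real obstacle here: the whole argument is a bookkeeping consequence of the already-proven Proposition~\ref{prop:sha-BS-bound} and the subexponential growth of $\tau$. The only minor subtlety is ensuring that the additive correction $-\log\tau(A_d)/\log H(A_d)$ survives the passage to liminf cleanly, which is handled by the standard observation that liminf is superadditive under addition of a sequence converging to zero.
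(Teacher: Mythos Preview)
Your proof is correct and follows exactly the same approach as the paper: the paper's proof simply notes that the hypothesis $\tau(A_d)=O(H(A_d)^\epsilon)$ forces $\log\tau(A_d)/\log H(A_d)\to 0$, and then appeals to Proposition~\ref{prop:sha-BS-bound}. You have spelled out the algebra of that appeal in full, but the argument is the same.
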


\begin{proof}
The hypothesis $\tau(A_d)=O(H(A_d)^\epsilon)$ for all $\epsilon>0$
implies that
  $$\lim_{d\to\infty}\log(\tau(A_d))/\log(H(A_d))=0,$$ 
  so the proposition follows immediately from the estimate 
  of Proposition~\ref{prop:sha-BS-bound}.
\end{proof}

\begin{cor}\label{cor:sha-BS}
If $A_d$ is a family of abelian varieties over $K$ such that
$H(A_d)\to\infty$, then 
$$\liminf_{d\to\infty} \BS(A_d)\ge   
\liminf_{d\to\infty}\frac{\dim\sha(A_d)}{\deg(\omega_{A_d})}$$ 
in any of the following situations:
\begin{enumerate}
\item $\dim(A_d)=1$ for all $n$
\item $A$ is an abelian variety over $K=\Fq(t)$,  $A_d$ is
  the family associated to the Kummer tower,
  and $A$ has semi-stable reduction at
  $t=0$ and $t=\infty$.
\item $A$ is an abelian variety over $K=\Fq(t)$,  $A_d$ is
  the family associated to the Artin-Schreier tower,
  and $A$ has semi-stable reduction at
  $t=\infty$.
\item $A$ is an abelian variety over $K=\Fq(E)$,  and $A_d$ is
  the family associated to the division tower.
\item $A$ is an abelian variety over $K=\Fq(t)$,  $A_d$ is
  the family associated to the $PGL_2$ tower,
  and $A$ has semi-stable reduction at
  $t=0$ and $t=1$.
\end{enumerate}  
\end{cor}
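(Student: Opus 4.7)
The plan is to reduce the corollary directly to Proposition~\ref{prop:sha-BS}, whose conclusion is exactly the inequality we want. To apply that proposition, two hypotheses must be verified in each of the five cases: first, that $H(A_d)\to\infty$, which is built into the hypothesis of the corollary; and second, that $\tau(A_d)=O(H(A_d)^\epsilon)$ for every $\epsilon>0$. All of the work is in the second condition, but this has already been done earlier in the paper.

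For case (1), where $\dim(A_d)=1$ for all $d$, the family consists of elliptic curves, so the Tamagawa bound $\tau(E_d)=O(H(E_d)^\epsilon)$ is exactly the statement of Lemma~\ref{lemma:tau-ec}, which was established via Ogg's formula with no restriction on characteristic or reduction type. For cases (2)--(5), the families are precisely the Kummer, Artin-Schreier, division, and $\PGL_2$ families, and the local semi-stability hypotheses at the ramified places ($t=0,\infty$ for Kummer and $\PGL_2$, $t=\infty$ for Artin-Schreier, nothing extra for the division tower) match exactly the hypotheses of parts (1)--(4) of Corollary~\ref{cor:tau-av}. Thus $\tau(A_d)=O(H(A_d)^\epsilon)$ in each of these cases as well. (Implicitly, the condition $\deg(\omega_A)>0$ needed in Corollary~\ref{cor:tau-av} follows from $H(A_d)\to\infty$, since otherwise $\deg(\omega_{A_d})=[K_d:K]\deg(\omega_A)$ would not grow.)

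With the Tamagawa bound in hand, Proposition~\ref{prop:sha-BS} yields the stated inequality
$$\liminf_{d\to\infty}\BS(A_d)\ge\liminf_{d\to\infty}\frac{\dim\sha(A_d)}{\deg(\omega_{A_d})}$$
in each case. There is no real obstacle here: the corollary is essentially a bookkeeping step that packages Proposition~\ref{prop:sha-BS} together with the three Tamagawa bounds established in Sections~2.2--2.5. The only thing to be careful about is matching the local semi-stability hypotheses of the corollary with those appearing in the four parts of Corollary~\ref{cor:tau-av}, which is straightforward case by case.
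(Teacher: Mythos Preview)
Your proposal is correct and follows exactly the paper's approach: the corollary is immediate from Lemma~\ref{lemma:tau-ec}, Corollary~\ref{cor:tau-av}, and Proposition~\ref{prop:sha-BS}. One small slip: in your parenthetical you list the ramified places for the $\PGL_2$ tower as $t=0,\infty$, but they are $t=0$ and $t=1$ (as stated in the corollary and in Corollary~\ref{cor:tau-av}(4)); this does not affect the argument.
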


\begin{proof}
  This is immediate from Lemma~\ref{lemma:tau-ec},
  Corollary~\ref{cor:tau-av}, and Proposition~\ref{prop:sha-BS}.
\end{proof}

\section{Brauer-Siegel ratio and Frobenius}\label{s:Fr}
As a first application of our results on the dimension of $\sha$, we
compute the Brauer-Siegel ratio for sequences of abelian varieties
associated to the Frobenius isogeny.

More precisely, let $E$ be an elliptic curve over the function field
$K=\Fq(\CC)$, and for $n\ge1$, let $E_n$ be the Frobenius base change:
$$E_n:=E^{(p^n)}=E\times_{K}K$$
where the right hand morphism $K\to K$ is the $p^n$-power Frobenius.

Our goal is the following result.

\begin{thm}\label{thm:FrBS}
Assume that $E$ is non-isotrivial.  Then
$$\lim_{n\to\infty}\BS(E_n)=1.$$
\end{thm}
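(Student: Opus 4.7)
The plan is to compute $\dim\sha(E_n)$ from Proposition~\ref{prop:sha-formula}, derive $\liminf\BS(E_n)\ge1$ from Corollary~\ref{cor:sha-BS}, and match it with the Hindry-Pacheco upper bound from \eqref{eq:BS-HP}.

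First I would observe that the relative $p^n$-Frobenius $F^n\colon E\to E_n$ is a $K$-isogeny of degree $p^n$, so $L(E_n,s)=L(E,s)$; consequently $E_n$ has the same slopes $\{\lambda_i\}$ as $E$, and, since non-isotriviality is preserved under Frobenius twist, the $K/k$-trace $\tr_{K/k}(E_n)$ is zero. On the other hand, the N\'eron model $\EE_n\to\CC$ is obtained from $\EE\to\CC$ by pullback along the absolute Frobenius of $\CC$, so $\omega_{E_n}\cong\omega_E^{\otimes p^n}$ and $\deg\omega_{E_n}=p^n\deg\omega_E$. In particular, $H(E_n)=H(E)^{p^n}\to\infty$ since non-isotriviality of $E$ forces $\deg\omega_E\ge1$.

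Feeding these inputs into Proposition~\ref{prop:sha-formula} yields
$$\dim\sha(E_n)=p^n\deg\omega_E+(g_\CC-1)-\sum_{\lambda_i<1}(1-\lambda_i),$$
where the slopes appearing in the last sum are those of $E$. Dividing by $\deg\omega_{E_n}=p^n\deg\omega_E$ shows $\dim\sha(E_n)/\deg\omega_{E_n}\to1$. Since Lemma~\ref{lemma:tau-ec} supplies $\tau(E_n)=O(H(E_n)^\epsilon)$ for every $\epsilon>0$, Corollary~\ref{cor:sha-BS}(1) then delivers the lower bound $\liminf_n\BS(E_n)\ge1$. The matching upper bound $\limsup_n\BS(E_n)\le1$ is immediate from the Hindry-Pacheco inequality \eqref{eq:BS-HP}, applied to the elliptic family $\{E_n\}$. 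There is no real obstacle: the crucial point is simply that Frobenius twist multiplies $\deg\omega$ by $p^n$ while, being a $K$-isogeny, leaving the slopes and the $K/k$-trace untouched, so the cancellation between the two sides of Proposition~\ref{prop:sha-formula} is exact in the limit.
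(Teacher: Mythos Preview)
Your overall strategy is exactly the paper's: use isogeny-invariance of the $L$-function to control the slope term in Proposition~\ref{prop:sha-formula}, deduce $\dim\sha(E_n)/\deg\omega_{E_n}\to1$, and then sandwich $\BS(E_n)$ between Corollary~\ref{cor:sha-BS}(1) and the Hindry--Pacheco upper bound. The endpoints are fine.

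The gap is your claim that the N\'eron model $\EE_n\to\CC$ is the pullback of $\EE\to\CC$ along the absolute Frobenius of $\CC$, and hence that $\deg\omega_{E_n}=p^n\deg\omega_E$. N\'eron models do not commute with Frobenius base change. Already at a place of split multiplicative reduction $I_m$, the curve $E^{(p)}$ has reduction $I_{pm}$, so the component group jumps from $\Z/m$ to $\Z/pm$; the Frobenius pullback of $\EE$ still has $\Z/m$ and is therefore not the N\'eron model. At additive places the discrepancy shows up in $\omega$ itself: for instance, $E:y^2=x^3+tx+t$ over $\F_5(t)$ has $\deg\omega_E=1$ (types $II$, $I_1$, $III^*$ at $0$, $t=2$, $\infty$), while $E^{(5)}:y^2=x^3+t^5x+t^5$ has $\deg\omega_{E^{(5)}}=2$ (types $II^*$, $I_5$, $III^*$), not $5$. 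So the formula $\deg\omega_{E_n}=p^n\deg\omega_E$ is false in general.

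The fix is exactly what the paper does and requires no new idea. First, you already know that $\dim\sha(E_n)-\deg\omega_{E_n}=(g_\CC-1)-\sum_{\lambda_i<1}(1-\lambda_i)$ is \emph{independent of $n$}, since the slopes and $\dim(A_0)=0$ are isogeny invariants. Second, argue directly that $\deg\omega_{E_n}\to\infty$: non-isotriviality gives $j(E)$ a pole of some order $e>0$ at some place $v$, hence $j(E_n)=j(E)^{p^n}$ has a pole of order $ep^n$ there, forcing $\deg(c_4(E_n))$ or $\deg(c_6(E_n))$ and thus $\deg\omega_{E_n}$ to infinity. Dividing the constant $\dim\sha(E_n)-\deg\omega_{E_n}$ by $\deg\omega_{E_n}$ then gives the desired limit $1$, and the rest of your argument goes through unchanged.
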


\begin{proof}
First we note that since $E$ is non-isotrivial, $H(E_n)\to\infty$ as
$n\to\infty$.  Indeed, the $j$-invariant of $E$ has a pole, say  of
order $e$, at some place of $K$, so the $j$ invariant of $E_n$ has
a pole of order $ep^n$ at the same place.  This implies that the
degrees of the divisors of one or both of $c_4(E_n)$ and $c_6(E_n)$
also tend to infinity, and this is possible only if
$\deg(\omega_{E_n})$ also tends to infinity.  Since
$H(E_n)=q^{\deg(\omega_{E_n})}$, we have that $H(E_n)\to\infty$.  

Next we note that Proposition~\ref{prop:sha-formula} shows that
$\dim\sha(E_n)-\deg(\omega_{E_n})$ depends only on the $L$-function of
$E_n$, indeed only on the slopes of the $L$-function.  Since $E$ and
$E_n$ are isogenous, they have the same $L$-function, so we have
$$\dim\sha(E_n)-\deg\omega_{E_n}=\dim\sha(E)-\deg\omega_E$$
for all $n$.

Dividing the last displayed equation by $\deg\omega_{E_n}$ and taking
the limit as $n\to\infty$, we get
$$\frac{\dim\sha(E_n)}{\deg\omega_{E_n}}\to1$$
since $\deg(\omega_{E_n})\to\infty$.

Applying part (1) of Corollary~\ref{cor:sha-BS}, we see that
$\liminf_{n\to\infty}BS(E_n)\ge1$.  On the other hand,
$\limsup_{n\to\infty}BS(E_n)\le1$ by
\cite[Cor.~1.13]{HindryPacheco16}, so we find that
$\lim_{n\to\infty}BS(E_n)=1$, as desired.
\end{proof}

\begin{rem}
  The same argument works for an abelian variety $A$ as long as
  $\deg(\omega_{A^{(p^n)}})\to\infty$ with $n$ and
  $\tau(A^{(p^n)})=o(H(A^{(p^n)})$. 
\end{rem}

\begin{rem}
The theorem says that the product $|\sha(E_n)|\Reg(E_n)$ grows with
$n$.  Our earlier results on $p$-descent \cite{Ulmer91} can be
used to show directly that $\sha(E_n)$ grows with $n$.  Full details
require an unilluminating consideration of many cases, so we limit
ourselves to a sketch in the simplest situation.  First, let $V:E^{(p)}\to
E$ be the Verschiebung isogeny, and note that the
Selmer group $\Sel(E^{(p)},p)$ contains $\Sel(E,V)$.  Also, let $L$ be
the (Galois) extension of $K$ obtained by adjoining the $(p-1)$st root
of a Hasse invariant of $E$, and let $G=\gal(L/K)$.
In \cite[Thm.~3.2 and Lemma~1.4]{Ulmer91}, we computed  that 
$$\Sel(E,V)\cong\Hom(J_{\m}/<cusps>,\Z/p\Z)^G$$
where $J_\m$ is the generalized Jacobian of the curve whose function
field is $L$ for a ``modulus'' $\m$ related to the places of bad
and/or supersingular reduction of $E$.  Rosenlicht showed that $J_\m$
is an extension of $J$ by a linear group (see \cite{SerreAGCF}), and
the unipotent part of this group contributes to the ``dimension'' of
$\Sel(E,V)$ and therefore to $\dim\sha(E^{(p)})$.  The contribution is
roughly the number of zeroes (with multiplicity) of the Hasse
invariant, namely $(p-1)\deg(\omega_E)$ which is approximately
$\deg(\omega_{E^{(p)}})-\deg(\omega)$.  Thus we find
$$\dim\sha(E^{(p)})\ge \deg(\omega_{E^{(p)}})-\deg(\omega),$$
in agreement with what we deduced from
Proposition~\ref{prop:sha-formula}. 
\end{rem}

\section{Bounding $\sha$ for a class of Jacobians}\label{s:DPC}
In this section, we review a general method for computing the $p$-part
of the Tate-Shafarevich group of certain Jacobians, generalizing our
previous work \cite{Ulmer14d} on the Legendre elliptic curve.
Although these methods suffice to compute the $p$-part of $\sha$ on
the nose, for simplicity we focus just on $\dim\sha$ as this is what
is needed to bound the Brauer-Siegel ratio from below.

\subsection{Jacobians related to products of curves}
Let $k$ be the finite field $\Fq$ of characteristic $p$ with $q$
elements.  Let $\CC$ and $\DD$ be curves over $k$, and let
$\SS=\CC\times_k\DD$.  Suppose that $\Delta$ is a group of
$k$-automorphisms of $\SS$ with order prime to $p$ and such that
$$\Delta\subset\aut_k(\CC)\times\aut_k(\DD)\subset\aut_k(\SS).$$

Suppose that the quotient $\SS/\Delta$ is birational to a smooth,
projective surface $\XX$ over $k$ and that $\XX$ is equipped with a
surjective and generically smooth morphism $\pi:\XX\to C$ where $C$ is
a smooth projective curve over $k$.  Let $K=k(C)$ and let $X$ be the
generic fiber of $\pi$, a smooth projective curve over $K$.  We assume
that $X$ has a $K$-rational point.  (A vast supply of such data is
given in \cite{Berger08} and \cite{Ulmer13a}.)

Let $J$ be the Jacobian of $X$.  We write $\Br(\XX)$ for the
cohomological Brauer group of $\XX$:  $\Br(\XX)=H^2(\XX,\G_m)$.  

\begin{prop}\label{prop:Br-DPC}\mbox{}
  \begin{enumerate}
  \item $\sha(J_X)$ and $\Br(\XX)$ are finite groups
\item There is a canonical isomorphism $\sha(J_X)\cong\Br(\XX)$.
\item There is a canonical isomorphism
$$\Br(\XX)[p^\infty]\cong\left(\Br(\SS)[p^\infty]\right)^\Delta.$$
  \end{enumerate}
\end{prop}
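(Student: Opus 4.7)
The plan is to handle (2) via the classical Artin--Grothendieck identification of $\sha$ with Brauer groups of fibered surfaces, then to handle (3) by Hochschild--Serre descent along $\SS\to\SS/\Delta\dashrightarrow\XX$, with (1) falling out as a consequence of Tate's finiteness theorem for Brauer groups of products of curves over finite fields.

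\textbf{Part (2).} The hypothesis that $X$ has a $K$-rational point provides a section of $\pi:\XX\to C$. Under this assumption, the classical theorem of Artin and Grothendieck (Grothendieck, \emph{Le groupe de Brauer III}; Milne, \emph{Arithmetic Duality Theorems}) supplies a canonical isomorphism $\Br(\XX)\isoto\sha(J_X)$. The argument runs through the Leray spectral sequence for $R\pi_*\G_m$: the section splits the edge maps, the identification $R^1\pi_*\G_m=\Picf_{\XX/C}$ connects the second row to $J_X$ over $K$, and the localization sequences of $\Picf$ at the closed points of $C$ identify $\Br(\XX)$ with the kernel of $H^1(K,J_X)\to\prod_v H^1(K_v,J_X)$.

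\textbf{Part (3).} I would proceed in two steps. First, since the Brauer group is a birational invariant of smooth projective surfaces and the quotient singularities of $\SS/\Delta$ are tame (hence rational), $\Br(\XX)$ agrees with $\Br(\tilde Y)$ for any smooth projective resolution $\tilde Y\to\SS/\Delta$. Second, let $V\subset\SS$ be a $\Delta$-stable open on which $\Delta$ acts freely, so that $V\to V/\Delta$ is \'etale Galois with group $\Delta$. The Hochschild--Serre spectral sequence
\[H^i(\Delta,H^j_{\text{\'et}}(V,\G_m))\Rightarrow H^{i+j}_{\text{\'et}}(V/\Delta,\G_m)\]
collapses on $p$-primary parts because $|\Delta|$ is a unit in $\Z_{(p)}$, forcing $H^i(\Delta,M)=0$ for $i>0$ and every $p$-primary $\Delta$-module $M$. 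This gives $\Br(V/\Delta)[p^\infty]\cong\Br(V)[p^\infty]^\Delta$, which is then compared with $\Br(\tilde Y)[p^\infty]$ and $\Br(\SS)[p^\infty]$ via purity and birational invariance to conclude $\Br(\XX)[p^\infty]\cong\Br(\SS)[p^\infty]^\Delta$.

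\textbf{Part (1) and main obstacle.} With (2) and (3) in hand, finiteness of $\Br(\XX)$ (equivalently of $\sha(J_X)$) reduces at $p$ to finiteness of $\Br(\SS)=\Br(\CC\times\DD)$, which is Tate's theorem for products of curves over finite fields; the analogous argument on $\ell$-adic cohomology at $\ell\ne p$ dispatches the other primary parts. The principal obstacle I foresee is the careful control of the non-free locus of $\Delta$ on $\SS$: since $\Delta\subset\aut_k(\CC)\times\aut_k(\DD)$, an element $(\alpha,1)$ with $\alpha\ne 1$ fixes the divisor $\CC^\alpha\times\DD$, so the non-free locus may have codimension one and purity does not apply na\"ively. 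One must therefore decompose the action according to which factor is trivial (treating the codimension-one fixed divisors via an equivariant resolution and tracking them explicitly in the Hochschild--Serre comparison), or equivalently work throughout on a smaller open where the codimension-two condition does hold, and then invoke purity to extend the identification back to $\SS$ and $\XX$.
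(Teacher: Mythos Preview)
Your outline is correct and matches the paper's own treatment, which simply attributes (2) and (3) to Grothendieck, (1) to Tate, and defers the details to \cite[\S4]{Ulmer14d}. The three ingredients you name---Artin--Grothendieck for (2), descent along the $\Delta$-quotient for (3), and Tate's theorem on products of curves for (1)---are exactly the right ones.

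The ``main obstacle'' you raise, however, is not a genuine obstacle, and your proposed workaround (equivariant resolution, tracking codimension-one strata) is unnecessary. The issue disappears if you argue with unramified Brauer groups rather than with Hochschild--Serre on an open plus purity. Since $\SS$ and $\XX$ are smooth and proper, $\Br(\SS)=\Br_{nr}(K(\SS))$ and $\Br(\XX)=\Br_{nr}(K(\XX))$, and $K(\SS)/K(\XX)$ is Galois with group $\Delta$. Restriction and corestriction for this field extension both preserve unramifiedness (via the standard compatibility of $\partial_v$ with $\res$ and $\text{cores}$), so they induce maps $\Br(\XX)\to\Br(\SS)^\Delta$ and $\Br(\SS)^\Delta\to\Br(\XX)$; on $p$-primary parts the composites are multiplication by $|\Delta|$, hence isomorphisms. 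No hypothesis on the codimension of the fixed locus enters.

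Alternatively, note that in every application in the paper the group $\Delta$ embeds in $\aut_k(\CC)\times\aut_k(\DD)$ with \emph{both} projections injective (the ``anti-diagonal'' actions), so every nontrivial element has fixed locus of codimension~$2$ and your purity argument goes through as written.
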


\begin{proof}
  In substance, parts (2) and (3) are due to Grothendieck \cite{Grothendieck68iii}
  and part (1) is due to Tate \cite{Tate66a}.  The details to deduce
  the statements here are given in \cite[\S4]{Ulmer14d}.
\end{proof}

\subsection{Brauer group of a product of curves}
We keep the notation of the preceding subsection.  In addition, let
$W=W(k)$ be the ring of Witt vectors over $k$ with Frobenius
endomorphism $\sigma$.  We write $H^1(\CC)$ for the crystalline
cohomology $H^1_{crys}(\CC/W)$ and similarly for $H^1(\DD)$.  These are
modules over the Dieudonn\'e ring $A=W\{F,V\}$, which is the
non-commutative polynomial ring generated over $W$ by symbols $F$ and
$V$ with relations $FV=VF=p$, $F\alpha=\sigma(\alpha)F$, and
$\alpha V=V\sigma(\alpha)$ for all $\alpha\in W$.

The following crystalline calculation of the $p$ part of the Brauer
group of $\SS$ is originally due to Dummigan (with additional
hypotheses) using results of Milne, and is proven in general in
\cite[\S10]{Ulmer14d}.

\begin{prop}\label{prop:Br-crys}
There is a canonical isomorphism
$$\Br(\SS)[p^n]\cong\frac{\Hom_A\left(H^1(\CC)/p^n,H^1(\DD)/p^n\right)}
{\Hom_A\left(H^1(\CC),H^1(\DD)\right)/p^n}$$
which is compatible with the actions of $\Delta$ on both sides.
\end{prop}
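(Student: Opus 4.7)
The plan is to chase the $p$-part of $\Br(\SS)$ through three classical machines: the Kummer sequence in flat cohomology, Milne's identification of $p$-power flat cohomology with logarithmic de Rham--Witt cohomology, and the K\"unneth formula for crystalline cohomology of a product. Each operation is functorial in the pair $(\CC,\DD)$, so the $\Delta$-equivariance asserted at the end of the statement will come for free once the identifications are made canonically.

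First I would apply the Kummer sequence $0 \to \mu_{p^n} \to \G_m \xrightarrow{p^n} \G_m \to 0$ on the flat site of $\SS$ to obtain an exact sequence
$$0 \to \Pic(\SS)/p^n \to H^2_{fl}(\SS,\mu_{p^n}) \to \Br(\SS)[p^n] \to 0.$$
Next, Milne's theorem identifies $H^2_{fl}(\SS,\mu_{p^n})$ with $H^1(\SS, W_n\Omega^1_{\SS,\log})$, and Illusie's fundamental exact sequence $0 \to W_n\Omega^1_{\log} \to W_n\Omega^1 \xrightarrow{F-1} W_n\Omega^1 \to 0$ relates the latter to the $F$-fixed part of de Rham--Witt cohomology. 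Unwinding, this exhibits $\Br(\SS)[p^n]$ as a canonical subquotient of $H^2_{crys}(\SS/W_n)$ depending only on its $A$-module structure.

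I would then apply the K\"unneth formula for crystalline cohomology to decompose
$$H^2_{crys}(\SS/W_n) \cong \bigl(H^0(\CC)\otimes H^2(\DD)\bigr) \oplus \bigl(H^1(\CC)\otimes H^1(\DD)\bigr) \oplus \bigl(H^2(\CC)\otimes H^0(\DD)\bigr)$$
as $A$-modules. The two outer summands are precisely the pull-backs of divisor classes from $\CC$ and $\DD$, hence sit inside the $\Pic(\SS)/p^n$ piece that gets killed, so $\Br(\SS)[p^n]$ is computed from the middle summand $H^1(\CC)\otimes H^1(\DD)$. Using Poincar\'e duality for a curve, $H^1(\CC)^\vee \cong H^1(\CC)(1)$ as $F$-$V$-modules, I would rewrite this middle summand as $\Hom_W(H^1(\CC), H^1(\DD))$ with a Tate twist, so that the constraint of respecting both $F$ and $V$ cuts out the slope-$1$ part relevant to $\Br$; this produces exactly $\Hom_A(H^1(\CC)/p^n, H^1(\DD)/p^n)$ on the numerator side. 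The denominator $\Hom_A(H^1(\CC),H^1(\DD))/p^n$ appears as the image of integral $A$-homomorphisms under reduction, which is exactly what must be quotiented out to account for the $\times p^n$ map on $\G_m$ in the Kummer sequence. The $\Delta$-compatibility is immediate because every identification used here is natural in the input curves.

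The main technical obstacle is the second step: making Milne's identification canonical enough to carry the full Dieudonn\'e-module structure, together with the correct Tate twists in the Poincar\'e duality used at the last stage. Once that bookkeeping is in place, the K\"unneth decomposition and the passage to $\Hom_A$ are formal manipulations, and $\Delta$-equivariance is transported through each step automatically.
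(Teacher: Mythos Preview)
The paper does not give a self-contained proof of this proposition; it simply records that the result is due to Dummigan (under extra hypotheses) using results of Milne, and refers to \cite[\S10]{Ulmer14d} for the general case. Your outline---Kummer sequence on the flat site, Milne's identification $H^2_{fl}(\SS,\mu_{p^n})\cong H^1(\SS,W_n\Omega^1_{\log})$, K\"unneth for $H^2_{crys}$ of a product of curves, and Poincar\'e duality to pass from $H^1(\CC)\otimes H^1(\DD)$ to $\Hom_W(H^1(\CC),H^1(\DD))$---is exactly the skeleton of the argument carried out in that reference, so your proposal is on target.

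One point deserves sharpening. You write that the two outer K\"unneth summands account for $\Pic(\SS)/p^n$ and that the denominator then arises from ``the $\times p^n$ map on $\G_m$''. In fact $\NS(\CC\times\DD)\cong\Z\oplus\Z\oplus\Hom(J_\CC,J_\DD)$, so $\Pic(\SS)/p^n$ already contributes to the \emph{middle} summand via divisorial correspondences, and it is precisely this contribution---identified with $\Hom_A(H^1(\CC),H^1(\DD))/p^n$ through the crystalline Tate theorem for homomorphisms of abelian varieties---that furnishes the denominator. Your conclusion is correct, but the bookkeeping for where the denominator comes from should be stated this way rather than as a separate consequence of the Kummer sequence. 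With that adjustment, your sketch matches the cited proof and the $\Delta$-equivariance is indeed automatic from naturality.
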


Here $\Hom_A$ denotes $W$-linear homomorphisms which commute with
$F$ and $V$.

Propositions \ref{prop:Br-DPC} and \ref{prop:Br-crys} give us a
powerful tool for bounding $\dim\sha(J)$ from below.  Recall that this
means bounding the growth of the order of $\sha(J)$ as we extend the
ground field from $\Fq$ to $\F_{q^\nu}$.  The denominator on the right
hand side of the displayed equation in \ref{prop:Br-crys} is known to
be bounded as $\nu$ varies (a fact we will see explicitly in
Section~\ref{s:p-adic} for the examples we consider), so we have:

\begin{cor}\label{cor:dim-sha} For all sufficiently large $n$,
$$\dim\sha(J)=\dim\Hom_A\left(H^1(\CC)/p^n,H^1(\DD)/p^n\right)^\Delta$$
\end{cor}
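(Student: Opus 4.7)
The plan is to apply the isomorphisms of Propositions~\ref{prop:Br-DPC} and \ref{prop:Br-crys} systematically after each base change $\Fq\to\F_{q^\nu}$, and pass to the limit $\nu\to\infty$ defining $\dim\sha(J)$ in Proposition/Definition~\ref{prop:sha-dim}. (I use $\nu$ for the ground-field extension index to avoid conflict with the exponent $n$ in the corollary.) Since the geometric data $(\SS,\XX,\pi,\Delta)$ base change naturally, Proposition~\ref{prop:Br-DPC} gives, for each $\nu$,
\[
\sha(J/K_\nu)[p^\infty]\;\cong\;\bigl(\Br(\SS_\nu)[p^\infty]\bigr)^{\Delta},\qquad \SS_\nu:=\SS\times_{\Fq}\F_{q^\nu},
\]
with both sides finite.

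Next I apply Proposition~\ref{prop:Br-crys} to $\SS_\nu$, working with the Dieudonn\'e ring $A_\nu$ based on $W_\nu:=W(\F_{q^\nu})$. Because $|\Delta|$ is prime to $p$, the functor of $\Delta$-invariants is exact on $p$-primary groups and commutes with reduction mod $p^n$, yielding
\[
\bigl(\Br(\SS_\nu)[p^n]\bigr)^{\Delta}\;\cong\;\frac{\Hom_{A_\nu}\!\bigl(H^1(\CC_\nu)/p^n,\,H^1(\DD_\nu)/p^n\bigr)^\Delta}{\Hom_{A_\nu}\!\bigl(H^1(\CC_\nu),\,H^1(\DD_\nu)\bigr)^\Delta/p^n}.
\]
For any $n$ exceeding the exponent of the finite group $\sha(J/K_\nu)[p^\infty]$, its $p^n$-torsion equals the full $p^\infty$-torsion, so combining the two displays produces an equality of orders
\[
|\sha(J/K_\nu)[p^\infty]|\;=\;\frac{\bigl|\Hom_{A_\nu}(H^1(\CC_\nu)/p^n,H^1(\DD_\nu)/p^n)^\Delta\bigr|}{\bigl|\Hom_{A_\nu}(H^1(\CC_\nu),H^1(\DD_\nu))^\Delta/p^n\bigr|}.
\]

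To conclude, take logarithms, divide by $\log(q^\nu)$, and send $\nu\to\infty$. The left side converges to $\dim\sha(J)$ by definition. The denominator $\Hom_{A_\nu}(H^1(\CC_\nu),H^1(\DD_\nu))^\Delta/p^n$ has order bounded independently of $\nu$ by the boundedness remark preceding the corollary (with verification deferred to Section~\ref{s:p-adic}), so it contributes nothing to the limit. What remains is the asymptotic invariant of the numerator, which is what the corollary abbreviates by $\dim\Hom_A(H^1(\CC)/p^n,H^1(\DD)/p^n)^\Delta$; this yields the claimed equality.

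The main obstacle is the boundedness assertion for the denominator; once that is granted, the rest is a formal unwinding of the two propositions plus Galois descent in $\nu$, which is painless because $|\Delta|$ is coprime to $p$. A secondary point is checking that the base-change maps in crystalline cohomology are compatible with the $A$-module structure and with the $\Delta$-action, which is standard.
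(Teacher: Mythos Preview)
Your proposal is correct and follows essentially the same route as the paper: combine Propositions~\ref{prop:Br-DPC} and \ref{prop:Br-crys} after base change to $\F_{q^\nu}$, take $\Delta$-invariants (using that $|\Delta|$ is prime to $p$), invoke the boundedness of the denominator deferred to Section~\ref{s:p-adic}, and pass to the limit in $\nu$. The paper's own justification is the single sentence preceding the corollary, and your write-up is a faithful expansion of that sentence with the compatibilities spelled out.
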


Here the $\dim$ on the right hand side is defined analogously to that
on the left:
\begin{multline*}
\dim\Hom_A\left(H^1(\CC)/p^n,H^1(\DD)/p^n\right)^\Delta\\
:=\lim_{\nu\to\infty}
\frac{\log|\Hom_A\left(H^1(\CC\times_k\F_{q^\nu})/p^n,
H^1(\DD\times_k\F_{q^\nu})/p^n\right)^\Delta|}{\log(q^\nu)}.
\end{multline*}

Computing the cardinality of the numerator on the right amounts to an
interesting exercise in $p$-linear algebra, at least for certain
curves $\CC$ and $\DD$.  We carry out these exercises in
Section~\ref{s:p-adic}.

\section{Cohomology of Fermat curves}\label{s:Fermat}
We review some well-known result on the cohomology of Fermat curves.

As usual, let $k=\Fq$ be the finite field of cardinality $q$ and
characteristic $p$.  We write $\kbar$ for the algebraic closure of
$k$.  For a positive integer $d$ relatively prime to $p$, let $F_d$ be
the smooth projective curve over $k$ given by
$$x_0^d+x_1^d+x_2^d=0.$$

We write $\mu_d$ for the group of $d$-th roots of unity in $\kbar$.
There is an evident action of $\mu_d^3$ on $F_d\times_k\kbar$ under
which $(\zeta_i)\in\mu_d^3$ acts via $x_i\mapsto\zeta_i x_i$, and the
diagonal $(\zeta_0=\zeta_1=\zeta_2$) acts trivially, so we have
$G:=\mu_d^3/\mu_d\subset\aut_\kbar(F_d)$.

Let 
$$A=\left\{(a_0,a_1,a_2)|\sum a_i=0\right\}\subset(\Z/d\Z)^3.$$
Abusively writing $\zeta$ both for a root of unity in $\kbar$ and for
its Teichm\"uller lift to the Witt vectors $W(\kbar)$, we may identify
$A$ with the character group $\Hom(G,W(\kbar)^\times)$.  Let 
$$A'=\left\{(a_i)\in A|a_i\neq0, i=0,1,2\right\}.$$
Given
$(a_0,a_1,a_2)\in A$, let $\<a_i/d\>$ be the fractional part of
$\tilde a_i/d$, where $\tilde a_i$ is any representative in $\Z$ of
the class $a_i$.
Define subsets $A_0$ and $A_1$ as follows:
$$A_0=\left\{(a_i)\in A'|
\<\frac{a_0}d\>+\<\frac{a_1}d\>+\<\frac{a_2}d\>=2\right\}$$
and
$$A_1=\left\{(a_i)\in A'|
\<\frac{a_0}d\>+\<\frac{a_1}d\>+\<\frac{a_2}d\>=1\right\}$$
It is a simple exercise to see that $A'$ is the disjoint union of
$A_0$ and $A_1$.  Let $\< p\>$ be the subgroup of
$\Q^\times$ generated by $p$.  Then $\< p\>$ acts on $A'$
coordinatewise: $p(a_0,a_1,a_2)=(pa_0,pa_1,pa_2)$.

Let $H=H^1_{crys}(F_d/W(k))$ be the crystalline cohomology of $F_d$
equipped with its action of the $p$-power Frobenius $F$ and
Verschiebung $V$.  Then $\Hbar:=H\tensor_{W(k)}W(\kbar)$ inherits an action
of $G$.

The following summarizes the main results on $H$.  The argument in
\cite[\S6]{Dummigan95}, stated in the special case where $d=q+1$, works
for general $d$ prime to $p$.

\begin{prop}\label{prop:Fermat-cohom}
There is $W$-basis $\{e_a\}$ of $H$ indexed by $a\in A'$ with the
following properties:
  \begin{enumerate}
  \item $F(e_a)=c_ae_{pa}$ where $c_a\in W(k)$ and
$$\ord_p(c_a)=\begin{cases}
0&\text{if $a\in A_0$}\\
1&\text{if $a\in A_1$}
\end{cases}$$
\item For $(\zeta_i)\in G$ and $a\in A'$,
$$(\zeta_i)e_a=a(\zeta_i)e_a=\zeta_0^{a_0}\zeta_1^{a_1}\zeta_2^{a_2}e_a$$
(an equality in $\Hbar$).
\end{enumerate}
\end{prop}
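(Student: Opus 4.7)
The plan is to follow the blueprint of Dummigan's argument \cite{Dummigan95} and reduce to three ingredients: (i)~a $G$-eigenspace decomposition of $\bar H:=H\tensor_{W(k)}W(\kbar)$, (ii)~the classical description of the Hodge filtration on $F_d$ in terms of monomial differentials, and (iii)~the general Mazur--Ogus characterization of the Hodge filtration inside a crystalline $F$-crystal.

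First, since $|G|=d^2$ is prime to $p$, the group algebra $W(\kbar)[G]$ is semisimple, and I would decompose
\[
\bar H=\bigoplus_{a\in A}\bar H_a,
\qquad
\bar H_a:=\{x\in\bar H\mid (\zeta_i)\cdot x=a(\zeta_i)\,x\text{ for all }(\zeta_i)\in G\}.
\]
Condition~(2) is then tautological once one exhibits a generator $e_a\in\bar H_a$. To identify the non-zero eigenspaces, I would argue that for $a\in A\setminus A'$ (i.e., some $a_i=0$) the eigenspace is pulled back from a quotient $F_d/H$ of smaller genus via the quotient by the stabilizer of the coordinate $x_i$, and therefore contributes nothing to the primitive part of $H^1(F_d)$; this is the classical Shioda/Katz analysis of Fermat Jacobians. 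A dimension count then confirms the claim: $|A'|=(d-1)(d-2)=2g(F_d)=\rk_W H$, so each $\bar H_a$ with $a\in A'$ is free of rank one over $W(\kbar)$. I then pick generators $e_a$.

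Second, Frobenius $F$ is $\sigma$-semilinear in the $W(\kbar)$-structure and commutes with $G$ up to the $p$-power twist on characters, so $F(\bar H_a)\subset \bar H_{pa}$. As both sides are free of rank one, there is a unique $c_a\in W(\kbar)$ with $F(e_a)=c_a e_{pa}$, establishing (1) up to the valuation claim.

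The main step, and the one I expect to be the most delicate, is computing $\ord_p(c_a)$. I would use the Mazur--Ogus fact that the Hodge filtration inside $\bar H$ is characterized by
\[
H^{1,0}(F_d)\tensor W(\kbar)=\left\{x\in\bar H\mid F(x)\in p\bar H\right\}.
\]
Next, I would recall the standard basis of holomorphic differentials on $F_d$: writing $\omega_{b_0,b_1,b_2}$ for the differential determined by the monomial $x_0^{b_0-1}x_1^{b_1-1}x_2^{b_2-1}$ with $b_i\ge1$ and $b_0+b_1+b_2=d$, the group $G$ acts on $\omega_{b_0,b_1,b_2}$ through the character $a=(b_0,b_1,b_2)\bmod d\in A$, which satisfies $\<a_0/d\>+\<a_1/d\>+\<a_2/d\>=1$, i.e.\ $a\in A_1$. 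Matching this with the eigenspace decomposition gives
\[
H^{1,0}(F_d)\tensor W(\kbar)=\bigoplus_{a\in A_1}\bar H_a,
\]
so $\ord_p(c_a)\ge1$ if and only if $a\in A_1$. Finally, the relation $FV=p$ applied to $e_a$ yields $V(e_{pa})=(p/c_a)e_a$, and since $V(e_{pa})\in\bar H$ we conclude $\ord_p(c_a)\le1$. Combining these bounds gives $\ord_p(c_a)=1$ for $a\in A_1$ and $\ord_p(c_a)=0$ for $a\in A_0$, as required. The hardest point is the correct identification of the Hodge filtration with the $A_1$-eigenspaces, which amounts to a careful bookkeeping of the $G$-action on monomial differentials; Dummigan carries this out for $d=q+1$, and the argument goes through verbatim for arbitrary $d$ prime to $p$.
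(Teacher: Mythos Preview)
Your approach is correct and is precisely what the paper intends: the paper gives no proof of its own but simply states that Dummigan's argument in \cite[\S6]{Dummigan95} for $d=q+1$ goes through verbatim for arbitrary $d$ prime to $p$, and you have faithfully reconstructed that argument. One small point to tighten: the proposition asserts that $\{e_a\}$ is a $W(k)$-basis of $H$ with $c_a\in W(k)$, whereas your construction produces generators of the rank-one eigenspaces $\bar H_a$ over $W(\kbar)$; you should either remark that the applications in the paper freely enlarge $k$ to contain $\mu_d$ (so that $G$ acts on $F_d/k$ and the eigenspaces descend), or add a Galois-descent step to show the $e_a$ can be chosen in $H$.
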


\subsection{A remark on twists}
It is sometimes convenient to work with a different model of the
Fermat curve, namely
$$F_d':\qquad y_0^d+y_1^d=y_2^d.$$
This is a twist of $F_d$ in the sense that they $F_d$ and $F'_d$ become
isomorphic over $\kbar$ via
$$(x_0,x_1,x_2)\mapsto (y_0,y_1,\epsilon y_2)$$
where $\epsilon$ is a $d$-th root of -1.  It follows that
Proposition~\ref{prop:Fermat-cohom} holds for $F_d'$ as well, with
possibly different constants $c_a$ which nevertheless continue to satisfy the
valuation formula in part (1).

\subsection{A remark on quotients}\label{ss:F-quotients}
If $\CC$ is the quotient of $F_d$ by a subgroup of $G'\subset G$, then the
crystalline cohomology of $\CC$ can be identified with the
$W$-submodule of $H$ generated by the $e_a$ whose indices $a$ are
trivial on $G'$.

For example, the hyperelliptic curve
$$\CC_{2,d}:\qquad y^2=x^d+1$$
is the quotient of $F_{2d}'$ by a subgroup of $G$ isomorphic to
$\mu_d\times\mu_2$.  (If $d$ is even, it is also a quotient of $F'_d$,
but it is more convenient to have a uniform statement.)

More generally, the superelliptic curve
$$\CC_{r,d}:\qquad y^r=x^d+1$$
is the quotient of $F'_{rd}$ by a subgroup of $G$ isomorphic to
$\mu_d\times\mu_r$  

The crystalline cohomology $H^1_{crys}(\CC_{r,d}/W(k))$ can then be
identified with the $W$-submodule of $H^1_{crys}(F'_{rd}/W(k))$
generated by the $e_a$ where $a$ has the form
$$a=(a_0,a_1,a_2)=(ir,-ir-jd,jd)\quad0<i<d,\quad0<j<r,\quad
ir+jd\not\equiv0\pmod {rd}.$$
The set $I$ of such indices has cardinality
$(r-1)(d-1)-\gcd(r,d)+1$, and it is the disjoint union $I=I_0\cup I_1$
where
$$I_0=I\cap A_0\cong\left\{(i,j)|0<i<d,\ 0<j<r,\ ir+jd>rd\right\}$$
and
$$I_1=I\cap A_1\cong\left\{(i,j)|0<i<d,\ 0<j<r,\ ir+jd<rd\right\}.$$

In the case where $r=2$ we may further simplify this to 
$$I_0\cong\left\{i|d/2<i<d\right\}$$
and
$$I_1\cong\left\{i|0<i<d/2\right\}.$$

These sets, with their action of $\<p\>$, will play a key role in the
$p$-adic exercises that compute $\dim\sha$ for the Jacobians
introduced in Section~\ref{s:DPC}.


\section{$p$-adic exercises}\label{s:p-adic}
In this section, we carry out the exercises in semi-linear algebra
needed to compute the dimension of $\sha$ for several families of
abelian varieties.

Let $p$ be a prime and let $\Fq$ be the field of cardinality $q$ and
characteristic $p$.  Let $W=W(\Fq)$ be the Witt vectors over $\Fq$,
and let $W_n=W/p^n$.  Write $\sigma$ for the $p$-power Witt-vector
Frobenius.  For a positive integer $\nu$, we write $\F_{q^\nu}$ for
the field of $q^\nu$ elements, $W_\nu=W(\F_{q^\nu})$ for the
corresponding Witt ring, and $W_{n,\nu}$ for $W_\nu/p^n$.

Let $A=W\{F,V\}$ be the Dieudonn\'e ring of non-commutative
polynomials in $F$ and $V$ with relations $FV=VF=p$,
$F\alpha=\sigma(\alpha)F$, and $\alpha V=V\sigma(\alpha)$ for
$\alpha\in W$.  Also, let $A_\nu$ be the ring $W_\nu\{F,V\}$ with
analogous relations.

Let $\< p\>$ be the cyclic subgroup of $\Q^\times$ generated
by $p$.

\subsection{Data}\label{ss:data}
Fix a finite set $I$ equipped with an action of $\< p\>$,
which we write multiplicatively: $i\mapsto pi$.  (In the applications
below, $I$ will typically be a subset of $\Z/d\Z$ for some $d$ not
divisible by $p$.)  Let $M$ be the free $W$-module with basis indexed
by $I$:
$$M:=\bigoplus_{i\in I}We_i.$$
Write $I$ as a disjoint union $I=I_0\cup I_1$ and choose elements
$c_i\in W$ such that 
$$\ord(c_i)=\begin{cases}
0&\text{if $i\in I_0$}\\
1&\text{if $i\in I_1$.}
\end{cases}$$
Define a $\sigma$-semilinear map $F:M\to M$ by setting
$$F(e_i)=c_ie_{pi}$$
and a $\sigma^{-1}$-semilinear map $V:M\to M$ by setting
$$V(e_i)=\frac p{\sigma^{-1}(c_{i/p})} e_{i/p}.$$
These definitions give $M$ the structure of an $A$-module, and there
is an induced $A$-module structure on $M_n:=M\tensor_W W_n$.  Parallel
definitions make $M_\nu:=M\tensor_WW_\nu$ and
$M_{n,\nu}:=M\tensor_WW_{n,\nu}$ into $A_\nu$-modules.

Fix another finite set $J$ equipped with an action of $\<
p\>$, write $J$ as a disjoint union $J=J_0\cup J_1$, 
and choose elements $d_j\in W$ with 
$$\ord(d_j)=\begin{cases}
0&\text{if $j\in J_0$}\\
1&\text{if $j\in J_1$.}
\end{cases}$$
Define
$$N:=\bigoplus_{j\in J}Wf_j,$$
with semilinear maps $F:N\to N$ and $V:N\to N$ defined by
$$F(f_j)=d_jf_{pj}$$
and 
$$V(f_j)=\frac p{\sigma^{-1}(d_{j/p})} f_{j/p}.$$
Then $N$ and $N_n:=N\tensor_WW_n$ are $A$-modules, and parallel
definitions make $N_\nu:=N\tensor_WW_\nu$ and
$N_{n,\nu}:=N\tensor_WW_{n,\nu}$ into $A_\nu$-modules.

Let $\< p\>$ act on $I\times J$ diagonally, and let $O$ be
the set of orbits of this action.  For an orbit $o\in O$, define
$$d(o):=\min\left(\left|\left((I_0\times J_1)\cap o\right)\right|,
\left|\left((I_1\times J_0)\cap o\right)\right|\right).$$

Consider $\Hom_{W_\nu}(N_\nu,M_\nu)$, a free $W_\nu$-module with basis
$\varphi_{ij}$ defined by 
$$\varphi_{ij}(f_{j'})=\begin{cases}
e_i&\text{if $j'=j$}\\
0&\text{if $j'\neq j$.}
\end{cases}$$
These elements induce elements of 
$$\Hom_{W_\nu}(N_{n,\nu},M_{n,\nu})
=\Hom_{W_\nu}(N_\nu,M_\nu)/p^n$$
which form a basis over $W_{n,\nu}$ and which we abusively also
denote $\varphi_{ij}$.

\subsection{Statement}\label{ss:p-adic-statement}
Our main objects of study in this section are the subgroups
$$H_\nu:=\Hom_{A_\nu}(N_\nu,M_\nu)\subset\Hom_{W_\nu}(N_\nu,M_\nu)$$
and
$$H_{n,\nu}:=\Hom_{A_\nu}(N_{n,\nu},M_{n,\nu})
\subset\Hom_{W_{\nu}}(N_{n,\nu},M_{n,\nu})$$ consisting of
$A_\nu$-module homomorphisms, i.e., homomorphisms $\varphi$ such that
$F\compose\varphi=\varphi\compose F$ and
$V\compose\varphi=\varphi\compose V$.

To state the results, we first decompose the groups of interest into
components indexed by the set of orbits $O$.  For $o\in O$, let
$$\Hom_{W_\nu}(N_\nu,M_\nu)^o:=
\left\{\left.\varphi=\sum_{i,j}\alpha_{i,j}\varphi_{i,j}\right|\alpha_{i,j}=0
\text{ for all }(i,j)\not\in o\right\}$$
and
$$\Hom_{W_\nu}(N_{n,\nu},M_{n,\nu})^o:=
\left\{\left.\varphi=\sum_{i,j}\alpha_{i,j}\varphi_{i,j}\right|\alpha_{i,j}=0
\text{ for all }(i,j)\not\in o\right\}.$$
We define
$$H_{\nu}^o:=H_\nu\cap\Hom_{W_\nu}(N_\nu,M_\nu)^o$$
and
$$H_{n,\nu}^o:=H_{n,\nu}\cap\Hom_{W_\nu}(N_{n,\nu},M_{n,\nu})^o.$$

Here is the main result of this section:

\begin{thm}\label{thm:dim}
  \mbox{}
  \begin{enumerate}
  \item $H_{\nu}=\oplus_{o\in O} H_{\nu}^o$ and
    $H_{n,\nu}=\oplus_{o\in O} H_{n,\nu}^o$.
  \item $\left|H_\nu^o/p^n\right|$ is at most $p^{n|o|}$ and in
    particular is bounded independently of $\nu$.
\item For all sufficiently large $n$,
$$\lim_{\nu\to\infty}\frac{\log\left|H_{n,\nu}^o\right|}{\log(q^\nu)}
=d(o).$$
  \end{enumerate}
\end{thm}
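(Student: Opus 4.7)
The plan is as follows.  For part (1), I would expand $F\circ\varphi=\varphi\circ F$ applied to the basis vector $f_j$: using $F(f_j)=d_jf_{pj}$, $F(e_i)=c_ie_{pi}$, and the $\sigma$-semilinearity of $F$, matching the coefficient of $e_{pi}$ on both sides yields the fundamental relation
\[
\sigma(\alpha_{i,j})\,c_i \;=\; d_j\,\alpha_{pi,pj}
\qquad\text{for all }(i,j)\in I\times J.
\]
A parallel computation for $V$-equivariance gives the same relation (unsurprisingly, since $FV=VF=p$ makes the two compatibilities equivalent). Because this relation only couples $(i,j)$ with $(pi,pj)$, the nonzero coefficients of any $\varphi$ are supported on a union of $\langle p\rangle$-orbits, yielding the decompositions in (1) both integrally and mod $p^n$. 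For part (2), label the orbit cyclically as $(i_k,j_k)_{k=0}^{N-1}$ with $N=|o|$ and $(i_{k+1},j_{k+1})=p(i_k,j_k)$, and set $\alpha_k=\alpha_{i_k,j_k}$; the fundamental relation becomes the cyclic system $d_{j_k}\alpha_{k+1}=\sigma(\alpha_k)c_{i_k}$. Solving forward in $W_\nu[1/p]$ expresses every $\alpha_k$ in terms of $\alpha_0$, so $\varphi\mapsto\alpha_0$ embeds $H_\nu^o$ into $W_\nu$ as the set of $\alpha_0$ satisfying the twisted consistency $\alpha_0=R\,\sigma^N(\alpha_0)$ with $R=\prod_{m}\sigma^{N-1-m}(c_{i_m}/d_{j_m})\in W_\nu[1/p]$, and for which the intermediate $\alpha_k$ remain integral. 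The map $\alpha_0\mapsto R\,\sigma^N(\alpha_0)$ is $\Z_p$-linear (though not $W_\nu$-linear), and its fixed-point set already over $W(\Fqbar)$ has $\Z_p$-rank at most $N$ (equal to $N$ if $R$ is a unit, by Lang--Steinberg; zero if $\ord R\neq 0$, by iteration). Hence $|H_\nu^o/p^n|\leq p^{nN}=p^{n|o|}$ independently of $\nu$.

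For part (3), the technical core, I would work step by step in $W_{n,\nu}$. At step $k$, the equation $d_{j_k}\alpha_{k+1}=\sigma(\alpha_k)c_{i_k}$ imposes a divisibility requirement $\alpha_k\in p^{\max(0,\,\ord d_{j_k}-\ord c_{i_k})}W_{n,\nu}$ on the input and leaves $\alpha_{k+1}$ determined only modulo $p^{n-\ord d_{j_k}}$, i.e.\ with $q^{\nu\cdot\ord d_{j_k}}$-fold lifting ambiguity. To organize the count I would normalize $\alpha_k=p^{a_k}\beta_k$ with $a_k$ tracking the cumulative walk $S_k=\sum_{m<k}(\ord c_{i_m}-\ord d_{j_m})$, shifted so all $a_k\geq0$; for $n$ larger than the amplitude of this walk, every equation becomes a unit equation on $\beta_k\in W_{n-a_k,\nu}$ and the cyclic consistency collapses to a twisted equation $\beta_0=B\,\sigma^N(\beta_0)$ modulo a fixed power of $p$. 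By a Lang--Steinberg / Hilbert~90 argument applied to $\F_{q^\nu}$, the mod-$p$ solution set of this twisted fixed-point equation is bounded independently of $\nu$, contributing only a bounded prefactor. The $\nu$-dependence of $|H_{n,\nu}^o|$ then comes entirely from the residual lifting freedoms at steps where $\ord d_{j_k}=1$, offset by the divisibility constraints at steps where $\ord d_{j_k}>\ord c_{i_k}$; a careful tally shows that after propagating the cyclic matching, exactly $d(o)=\min(|(I_0\times J_1)\cap o|,|(I_1\times J_0)\cap o|)$ independent $q^\nu$-factors remain, giving $|H_{n,\nu}^o|=(\text{bounded})\cdot q^{\nu d(o)}$ and the claimed limit.

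The principal obstacle is the matching argument just alluded to: showing cleanly that only $d(o)$ of the potential lifting ambiguities survive, rather than all $|o\cap(I\times J_1)|$ of them. I expect the key step will be to verify that an imbalance between pos- and neg-steps in the orbit forces excess divisibility on the seed $\alpha_0$, saturating some of the would-be lifts at the deepest layer of the filtration and leaving $\min(\mu,\lambda)$ as the effective count. An alternative organization—perhaps cleaner but requiring more machinery—would be to identify $H_{n,\nu}^o$ with a group of truncated Dieudonn\'e-module homomorphisms between the isoclinic pieces attached to $o$ in $M$ and $N$, and to read off $d(o)$ from the slope-multiplicity computation via Manin's classification.
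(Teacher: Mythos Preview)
Your arguments for parts (1) and (2) match the paper's, and your walk $S_k$ is exactly the paper's height function $a(\ell)$. The gap is in part (3), and it stems from a point you gloss over: the $F$- and $V$-conditions are equivalent over the torsion-free $W_\nu$, but \emph{not} over $W_{n,\nu}$. At a step with $\ord c_i=\ord d_j=1$, your $F$-equation $c_i\sigma(\alpha_k)=d_j\alpha_{k+1}$ leaves $\alpha_{k+1}$ ambiguous modulo $p^{n-1}$, but the $V$-equation---in which $p/c_i$ and $p/d_j$ are now units---pins it down exactly. So your ``$q^{\nu\cdot\ord d_{j_k}}$-fold lifting ambiguity'' overcounts, and the tally you sketch does not produce $d(o)$.

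The paper's organizing observation is that at each step one of $F,V$ has a unit coefficient on the appropriate side, and this determines one of $\alpha_k,\alpha_{k+1}$ from the other: at a $u$-step ($i\in I_1,\ j\in J_0$) the upper $\alpha_{k+1}$ is determined by the lower $\alpha_k$; at an $l$-step ($i\in I_0,\ j\in J_1$) the reverse; at an $m$-step either from the other. Hence the only genuinely free parameters are the $\beta_\lambda$ at the \emph{local minima} $\ell_\lambda$ of the walk $a$, and these satisfy a circular system $p^{e_{2\lambda-1}}\sigma^{\cdots}\beta_{\lambda-1}=\gamma_\lambda p^{e_{2\lambda}}\beta_\lambda$, where $e_{2\lambda-1},e_{2\lambda}$ count the $u$'s and $l$'s between consecutive minima. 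The ``careful tally'' you are missing is a triangularization: after a d\'evissage reducing to $n=ht(o)$, choose the base point so that $a(0)=\min a$, and use $p^{ht(o)}\beta_0=0$ together with the equations up to the global maximum $\ell^\lambda$ to deduce $p^{e_{2\lambda+1}}\beta_\lambda=0$; substituting this breaks the circle into a triangular system with diagonal $p^{e_{2\lambda+2}},p^{e_{2\lambda+4}},\ldots,p^{e_{2\lambda}}$ and solution count $q^{\nu(e_2+\cdots+e_{2k})}=q^{\nu\,d(o)}$ (symmetrically when $a(|o|)\le 0$). Your normalization $\alpha_k=p^{a_k}\beta_k$ heads in the right direction but does not by itself isolate the free parameters or execute this count.
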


\begin{proof}
Let 
$$\varphi=\sum_{(i,j)\in I\times J}\alpha_{i,j}\varphi_{i,j}$$
be a typical element
of $\Hom_{W_\nu}(N_\nu,M_\nu)$ (with $\alpha_{i,j}\in W_\nu$) or
$\Hom_{W_\nu}(N_{n,\nu},M_{n,\nu})$ (with $\alpha_{i,j}\in
W_{n,\nu}$).  Then a straightforward calculation shows that
$F\compose\varphi=\varphi\compose F$ if and only if
\begin{equation}\label{eq:Fphi}
c_i\sigma(\alpha_{i,j})=d_j\alpha_{p(i,j)}\quad
\text{for all }(i,j)\in I\times J,
\end{equation}
and $V\compose\varphi=\varphi\compose V$ if and only if
\begin{equation}\label{eq:Vphi}
\left(\frac p{d_j}\right)\sigma(\alpha_{i,j})
=\left(\frac p{c_i}\right)\alpha_{p(i,j)}\quad
\text{for all }(i,j)\in I\times J.
\end{equation}
Defining
$$\varphi^o=\sum_{(i,j)\in o}\alpha_{i,j}\varphi_{i,j},$$
it is clear that $\varphi^o\in H_\nu^o$ or $H_{n,\nu}^o$ and that
$\varphi=\sum_{o\in O} \varphi^o$.  This shows that 
$H_{\nu}=\sum_{o\in O} H_{\nu}^o$ and
$H_{n,\nu}=\sum_{o\in O} H_{n,\nu}^o$, and it is immediate that
the sums are direct.  This proves part (1) of the theorem.

For part (2), take a a typical element $\varphi^o=\sum_{(i,j)\in
  o}\alpha_{i,j}\varphi_{i,j}$ of $H_\nu^o$.  Since $W_\nu$ is
torsion-free, the conditions \eqref{eq:Fphi} and \eqref{eq:Vphi} are
equivalent, so we focus on \eqref{eq:Fphi}.  Fix a base point
$(i_0,j_0)\in o$ and note that $\alpha_{i_0,j_0}$ determines the other
coefficients $\alpha_{i,j}$ with $(i,j)\in o$ by repeatedly using
\eqref{eq:Fphi}.  Indeed, we have
\begin{align*}
c_{i_0}\sigma(\alpha_{i_0,j_0})&=d_{j_0}\alpha_{p(i_0,j_0)}\\
c_{pi_0}\sigma(c_{i_0})\sigma^2(\alpha_{i_0,j_0})
&=d_{pj_0}\sigma(d_{j_0})\alpha_{p^2(i_0,j_0)}\\
&\vdots\\
c_{p^{|o|-1}i_0}\sigma(c_{p^{|o|-2}i_0})
\cdots\sigma^{|o|-1}(c_{i_0})\sigma^{|o|}(\alpha_{i_0,j_0})
&=d_{p^{|o|-1}j_0}\sigma(d_{p^{|o|-2}j_0})
\cdots\sigma^{|o|-1}(d_{j_0})\alpha_{i_0,j_0}
\end{align*}
Here $|o|$ is the cardinality of $o$ and in the last line we use that
$p^{|o|}(i_0,j_0)=(i_0,j_0)$.  Moreover, $\alpha_{i_0,j_0}$ determines a
solution to \eqref{eq:Fphi} only if the last displayed line holds.
(There may be other integrality conditions, but they are not important
for our argument.)  If the valuations of
$$c_{p^{|o|-1}i_0}\sigma(c_{p^{|o|-2}i_0})\cdots\sigma^{|o|-1}(c_{i_0})
\quad\text{and}\quad
d_{p^{|o|-1}j_0}\sigma(d_{p^{|o|-2}j_0})\cdots\sigma^{|o|-1}(d_{j_0})$$
are distinct, then it is clear that the only solution is
$\alpha_{i_0,j_0}=0$.  On the other hand, if the valuations are the same,
the last equation is equivalent to one of the form
$\sigma^{|o|}(\alpha_{i_0,j_0})=\gamma\alpha_{i_0,j_0}$ where
$\gamma\in W_\nu$ is a unit.  Written in terms of Witt vector
components, this last equation is a polynomial of degree $p^{|o|}$ in
each component of $\alpha_{i_0,j_0}$ (with coefficients given by $\gamma$
and the lower Witt components of $\alpha_{i_0,j_0}$).  Therefore, taking
$\alpha_{i_0,j_0}$ modulo $p^n$, there are at most $p^{n|o|}$ solutions,
and this proves part (2) of the theorem.

We now turn to part (3) of the theorem, which follows from a somewhat
more elaborate version of the calculation of \cite[\S7,
\S10]{Ulmer14d}.  Namely, we fix an orbit $o$ and consider
\eqref{eq:Fphi} and \eqref{eq:Vphi} with $(i,j)\in o$ and
$\alpha_{i,j}\in W_{n,\nu}$.  These are the equations defining
$H_{n,\nu}^o$ as a subset of $\Hom_{W_\nu}(N_{n,\nu},M_{n,\nu})^o$,
and analyzing them will allow us to estimate the size of $H_{n,\nu}^o$.

Fix an orbit $o\in O$ and a base point $(i_0,j_0)\in o$.  We associate a
word $w$ on the alphabet $\{u,l,m\}$ to $o$ as follows:
$w=w_1w_2\cdots w_{|o|}$ where
$$w_\ell=\begin{cases}
u&\text{if $p^{\ell-1}(i_0,j_0)\in I_1\times J_0$}\\
l&\text{if $p^{\ell-1}(i_0,j_0)\in I_0\times J_1$}\\
m&\text{if $p^{\ell-1}(i_0,j_0)\in (I_0\times J_0)\cup(I_1\times J_1)$}
\end{cases}$$
Changing the base point changes $w$ by a cyclic permutation.  Note
that $d(o)$ is the smaller of the number of appearances of $l$ or $u$
in $w$. 

The motivation for these letters is as follows: If $w_\ell=u$, then in
\eqref{eq:Fphi} and \eqref{eq:Vphi} for $(i,j)=p^{\ell-1}(i_0,j_0)$,
$d_j$ is a unit and $p/c_j$ is a unit.  It follows that the two
equations are equivalent and either of them determines
$\alpha_{p^\ell(i_o,j_0)}$ in terms of $\alpha_{p^{\ell-1}(i_o,j_0)}$.
I.e., the ``upper'' $\alpha_{p^\ell(i_o,j_0)}$ is determined by the
``lower'' $\alpha_{p^{\ell-1}(i_o,j_0)}$.  Similarly, if $w_\ell=l$,
the ``lower'' $\alpha_{p^{\ell-1}(i_o,j_0)}$ is determined by the
``upper'' $\alpha_{p^\ell(i_o,j_0)}$.  Finally, if $w_\ell=m$, then
one of \eqref{eq:Fphi} and \eqref{eq:Vphi} implies other and shows
that $\alpha_{p^{\ell-1}(i_o,j_0)}$ and $\alpha_{p^\ell(i_o,j_0)}$
determine each other.  We will use these
observations to eliminate most of the variables in the systems
\eqref{eq:Fphi} and \eqref{eq:Vphi}, and use the simplified system to
estimate the size of $H_{n,\nu}^o$ and prove part (3) of the theorem.

We first deal with three degenerate cases, namely those where $w$ is a
power of $m$, or has no letters $l$, or has no letters $u$.  In all
three cases, $d(o)=0$, so it will suffice to prove that
$|H_{n,\nu}^o|$ is bounded independently of $\nu$.  If $w=m^{|o|}$,
then $\alpha_{i_0,j_0}$ determines all of the
$\alpha_{p^\ell(i_0,j_0)}$, and the system
$(\ref{eq:Fphi}-\ref{eq:Vphi})$ reduces to a single equation
$$\sigma^{|o|}\alpha_{i_0,j_0}=\gamma\alpha_{i_0,j_0}$$
where $\gamma\in W$ us a unit.
This is easily seen to have at most $p^{n|o|}$ solutions for any
$\nu$, as desired.  If $w$ contains no letters $l$, then 
again $\alpha_{i_0,j_0}$ determines all of the
$\alpha_{p^\ell(i_0,j_0)}$, and the system
$(\ref{eq:Fphi}-\ref{eq:Vphi})$ reduces to a single equation
$$p^e\sigma^{|o|}\alpha_{i_0,j_0}=\gamma\alpha_{i_0,j_0}$$
where $e\ge0$ and $\gamma\in W$ is a unit.  (Here $e$ is the number of
appearances of $u$ in $w$.)  If $e=0$, we are in the previous case,
and the equation has at most $p^{n|o|}$ solutions for any $\nu$,
whereas if $e>0$, then this equation is easily seen to have no
solutions.  Finally, if $w$ has no letter $u$, then the system again
reduces to a single equation of the form
$$\sigma^{|o|}\alpha_{i_0,j_0}=\gamma p^e\alpha_{i_0,j_0}$$
which has at most $p^{n|o|}$ solutions for any $\nu$ if $e=0$ and has
no solutions if $e>0$.

For the rest of the argument, we may assume $w$ contains at least one
$u$ and at least one $l$.
Define a function $a:\{0,1,\dots,|o|\}\to\Z$ by setting $a(0)=0$ and
$$a(\ell)=a(\ell-1)+\begin{cases}
1&\text{if $w_\ell=u$}\\
-1&\text{if $w_\ell=l$}\\
0&\text{if $w_\ell=m$}
\end{cases}$$
for $1\le\ell\le|o|$.  

Define the \emph{height} of $o$, denoted $ht(o)$, to be the maximum
value of $a$ minus the minimum value of $a$.  Note that this is
independent of the choice of a base point for $o$.

We divide into two cases depending on whether
$a(|o|)\ge0$ or $a(|o|)\le0$.  

If $a(|o|)\ge0$, we may change base point so that $0=a(0)$ is the
minimum value of $a$ (i.e., $a(\ell)\ge0$ for $0\le\ell\le|o|$) and
$a(|o|-1)>a(|o|)$.  Indeed, start with any base point $(i_0,j_0)$ and
let $\ell_0$ be such that $a(\ell_0)$ is minimum among the $a(\ell)$.
Then replacing $(i_0,j_0)$ with $(i_1,j_1)=p^{\ell_0}(i_0,j_0)$
ensures that $a(\ell)\ge0$ for all $0\le\ell\le|o|$.  If the new word
$w$ ends with $m$ or $u$, we may replace $(i_1,j_1)$ with
$p^{-1}(i_1,j_1)$ without affecting the inequality $a(\ell)\ge0$.
Iterate until the last letter is $l$, thus yielding the desired base
point.  We fix such as base point and denote it $(i_0,j_0)$.

Choose 
$$0=\ell_0<\ell^0<\ell_1<\ell^1\cdots<\ell^{k-1}<\ell_k=|o|$$
such that $a$ is non-decreasing on
$\{\ell_\lambda,\dots,\ell^\lambda\}$ and non-increasing on
$\{\ell^\lambda,\dots,\ell_{\lambda+1}\}$ for $0\le\lambda\le k-1$.
In particular, the $\ell_\lambda$ are the arguments of local minima of
$a$.  Now let
$$\beta_\lambda=\alpha_{p^{\ell_\lambda}(i_0,j_0)}\quad 0\le\lambda\le
k.$$
(Note that $\beta_k=\beta_0$.)  Then the motivating remarks above
about the letters $u$, $l$, $m$ show that the $\beta_\lambda$
determine all the $\alpha_{i,j}$ with $(i,j)\in o$.  The equations
\eqref{eq:Fphi} and \eqref{eq:Vphi} hold if and only if the
$\beta_\lambda$ satisfy the system:
\begin{align}\label{eq:basic}
p^{e_1}\sigma^{\ell_1-\ell_0}\beta_0&=\gamma_1p^{e_2}\beta_1\notag\\
p^{e_3}\sigma^{\ell_2-\ell_1}\beta_1&=\gamma_2p^{e_4}\beta_2\notag\\
&\enskip\vdots\\
p^{e_{2k-1}}\sigma^{\ell_k-\ell_{k-1}}\beta_{k-1}&=\gamma_kp^{e_{2k}}\beta_k\notag
\end{align}
where
\begin{align*}
e_{2\lambda-1}&=\text{\# of appearances of $u$ in the subword
          }w_{\ell_{\lambda-1}+1}\cdots w_{\ell_\lambda}\\
e_{2\lambda}&=\text{\# of appearances of $l$ in the subword
          }w_{\ell_{\lambda-1}+1}\cdots w_{\ell_\lambda}
\end{align*}
and the units $\gamma_\lambda$ are defined by
$$\gamma_\lambda=p^{e_{2\lambda-1}-e_{2\lambda}}
\prod_{\ell=\ell_{\lambda-1}}^{\ell_\lambda-1}
\sigma^{\ell_\lambda-1-\ell}\left(\frac{d_{p^\ell j_0}}{c_{p^\ell i_0}}\right).$$

To recap, the assignment $\varphi\mapsto(\beta_\lambda)$ gives an
injection $H_{n,\nu}^o\into W_{n,\nu}^{k}$ whose image is the set of
solutions to equations \eqref{eq:basic}.  We will finish the proof of
part (3) of the theorem by estimating the number of such solutions.

Since the theorem is an assertion about $H_{n,\nu}^o$ for sufficiently
large $n$, we will assume for the rest of the proof that $n\ge ht(o)$.
Then we have an exact sequence
$$0\to p^{n-ht(o)}H_{n,\nu}^o\to H_{n,\nu}^o\to W_{n-ht(o),\nu}$$
where the right hand map sends a tuple $(\beta_\lambda)$ to the reduction
modulo $p^{n-ht(o)}$ of $\beta_0$.  (Exactness in the middle follows
from the fact that if $\mu\le n-ht(o)$, then we may recover the Witt
components $\beta_\lambda^{(\mu)}$ from $\beta_0$ modulo $p^{n-ht(o)}$
using the equations \eqref{eq:basic} and the fact that $a(\ell)\ge a(0)$
for all $\ell$.)  Moreover, we have
$$\beta_0\equiv
\left(\gamma_1\cdots\gamma_k\right)^{-1}
p^{a(|o|)}\sigma^{|o|}\beta_0\pmod{p^{n-ht(o)}}.$$
It follows that the image of $H_{n,\nu}^o$ in $W_{n-ht(o),\nu}$ has
order at most $p^{|o|(n-ht(o))}$ independently of $\nu$.  (We may even
conclude that it is 0 if $a(|o|)>0$.)  Thus this
image does not contribute to the limit in the theorem, and it will
suffice to bound $p^{n-ht(o)}H_{n,\nu}^o$.  

Note also that if $n'>n\ge ht(o)$, then 
$$p^{n-ht(o)}H_{n,\nu}^o\isoto p^{n'-ht(o)}H_{n',\nu}^o$$
via $(\beta_\lambda)\mapsto (p^{n'-n}\beta_\lambda)$.  Thus we may
assume that $n=ht(o)$ for the rest of the proof.  

To finish the estimation, we ``break'' the circular system
\eqref{eq:basic} into a triangular system, as in
\cite[\S7.6]{Ulmer14d}.  To that end, choose $\lambda$ so that
$a(\ell^\lambda)$ is the maximum of $a$, and note that 
$ht(0)=a(\ell^\lambda)-a(0)=a(\ell^\lambda)$.
Then we have
$$ht(o)=a(\ell^\lambda)=e_1-e_2+\cdots+e_{2\lambda+1}$$
and
\begin{align*}
0=p^{ht(o)}\beta_0&=p^{e_1-e_2+\cdots+e_{2\lambda+1}}\beta_0\\
&=p^{e_3-e_4+\cdots+e_{2\lambda+1}}\sigma^{-\ell_1}\left(\gamma_1\beta_1\right)\\
&\vdots\\
&=p^{e_{2\lambda+1}}\sigma^{-\ell_1}\left(\gamma_1\right)
\cdots\sigma^{-\ell_\lambda}\left(\gamma_\lambda\beta_\lambda\right).
\end{align*}
It follows that $p^{e_{2\lambda+1}}\beta_\lambda=0$.  Using this in
\eqref{eq:basic} and reordering, we obtain a lower-triangular system
\begin{align*}
0&=\gamma_{\lambda+1}p^{e_{2\lambda+2}}\beta_{\lambda+1}\\
0&=-p^{e_{2\lambda+3}}\sigma^{\ell_{\lambda+2}-\ell_{\lambda+1}}\beta_{\lambda+1}
+\gamma_{\lambda+2}p^{e_{2\lambda+4}}\beta_{\lambda+2}\\
&\vdots\\
0&=-p^{e_{2k-1}}\sigma^{\ell_k-\ell_{k-1}}\beta_{k-1}
+\gamma_kp^{e_{2k}}\beta_k\\
0&=-p^{e_1}\sigma^{\ell_1-\ell_0}\beta_0+\gamma_1p^{e_2}\beta_1\\
&\vdots\\
0&=-p^{e_{2\lambda-1}}\sigma^{\ell_\lambda-\ell_{\lambda-1}}\beta_{\lambda-1}
+\gamma_\lambda p^{e_{2\lambda}}\beta_\lambda.
\end{align*}
This system can be rewritten in the form
$$U_1BU_2
\begin{pmatrix}\beta_{\lambda+1}\\ \vdots\\\beta_k\\ \beta_1 \\
  \vdots\\ \beta_\lambda\end{pmatrix}=0$$
where $U_1$ and $U_2$ are diagonal with powers of $\sigma$ and
products of the units $\gamma_i$ in the diagonal entries and where
$$B=\begin{pmatrix}
p^{e_{2\lambda+2}}\\
-p^{e_{2\lambda+3}}&p^{e_{2\lambda+4}}\\
&&\ddots\\
\\
&&-p^{e_{2k-1}}&p^{e_{2k}}\\
&&&-p^{e_1}&p^{e_2}\\
&&&&&\ddots\\
\\
&&&&&-p^{e_{2\lambda-1}}&p^{e_{2\lambda}}
\end{pmatrix}.$$
It follows that the number of solutions to this system is
$$q^{\nu(e_2+e_4+\cdots+e_{2k})}.$$
On the other hand, $e_2+e_4+\cdots+e_{2k}$ is the total number of
appearances of $l$ in the word $w$, and since $a(|o|)\ge0$, $w$ has at
least as many appearances of $u$ as of $l$, so this sum is equal to
$d(o)$.  It follows that $\left|H_{ht(o),\nu}^o\right|=q^{\nu d(o)}$ and
that
$$\lim_{\nu\to\infty}\frac{\log\left|H_{n,\nu}^o\right|}{\log(q^\nu)}=d(o)$$
for any $n\ge ht(o)$.  This completes the proof of part (3) of the
theorem under the hypothesis that $a(|o|)\ge0$.

The proof when $a(|o|)\le0$ is very similar.  Roughly speaking, one
proceeds as above, but with a base point so that $a(|o|)$ is the
minimum of $a$ and with $\beta_k$ playing the role of $\beta_0$.
More precisely, assuming that $w$ has at least one $u$ and at least one
$l$ and that $a(|o|)\le0$, we may choose a base point for $o$ such
that $a(|o|)$ is the minimum value of $a$ and $a(1)>a(0)=0$.  Fix such
a base point, denoted $(i_0,j_0)$, for the rest of the argument.

As before, choose 
$$0=\ell_0<\ell^0<\ell_1<\ell^1\cdots<\ell^{k-1}<\ell_k=|o|$$
such that $a$ is non-decreasing on
$\{\ell_\lambda,\dots,\ell^\lambda\}$ and non-increasing on
$\{\ell^\lambda,\dots,\ell_{\lambda+1}\}$ for $0\le\lambda\le k-1$.
Let
$$\beta_\lambda=\alpha_{p^{\ell_\lambda}(i_0,j_0)}\quad 0\le\lambda\le
k.$$
Then as before, the coefficients $\alpha_{i,j}$ satisfy equations
\eqref{eq:Fphi} and \eqref{eq:Vphi} if and only if the $\beta_\lambda$
satisfy \eqref{eq:basic}.

The same d\'evissage as before shows that it suffices to estimate the
order of $H_{n,\nu}^o$ in the case where $n=ht(o)$.  We make the
circular system \eqref{eq:basic} triangular as follows:  Choose
$\lambda$ so that $a(\ell^\lambda)$ is the maximum of $a$.  Then
$$ht(o)=a(\ell^\lambda)-a(|o|)=e_{2k}-e_{2k-1}+\cdots+e_{2\lambda+2}.$$ 
Therefore,
\begin{align*}
0=p^{ht(o)}\beta_k&=p^{e_{2k}-e_{2k-1}+\cdots+e_{2\lambda+2}}\beta_k\\
&=p^{e_{2k-2}-e_{2k-3}+\cdots+e_{2\lambda+2}}\gamma_k^{-1}
\sigma^{\ell_k-\ell_{k-1}}\left(\beta_{k-1}\right)\\
&\vdots\\
&=p^{e_{2\lambda+2}}\gamma_k^{-1}
\sigma^{\ell_k-\ell_{k-1}}\left(\gamma_{k-1}^{-1}\right)
\sigma^{\ell_k-\ell_{k-2}}\left(\gamma_{k-2}^{-1}\right)
\cdots\sigma^{\ell_k-\ell_{\lambda+1}}\left(\beta_{\lambda+1}\right).
\end{align*}
It follows that $p^{e_{2\lambda+2}}\beta_{\lambda+1}=0$.  Using this
in \eqref{eq:basic} and reordering, we obtain (up to units and powers
of $\sigma$) an upper-triangular system whose diagonal entries are
$p^{e_1}, p^{e_3},\dots,p^{e_{2k-1}}$.

It follows that the number of solutions to \eqref{eq:basic} with
coefficients in $W_{n,\nu}$ (with $n=ht(o)$) is
$q^{\nu(e_1+\cdots+e_{2k-1})}$.  Observing that $a(|o|)\le0$ implies
that $d(o)=e_1+\cdots+e_{2k-1}$, we find that
$\left|H_{ht(o),\nu}^o\right|=q^{\nu d(o)}$ and that
$$\lim_{\nu\to\infty}\frac{\log\left|H_{n,\nu}^o\right|}{\log(q^\nu)}=d(o)$$
for any $n\ge ht(o)$.  This completes the proof of part (3) of the
theorem in the remaining case when $a(|o|)\le0$.
\end{proof}

\section{Equidistribution}\label{s:equi}
We record three equidistribution statements to be used to control the
average behavior of the invariant $d(o)$ from the preceding section.
The first is a consequence of what is proven in
\cite[Thm.~4.1]{Griffon18a}.  The second is a straightforward
``two-variable'' generalization, and the third is a simple corollary
of the first.  We omit the proofs since they are orthogonal to our
main concerns.

\begin{prop}\label{prop:equi1}
\textup{(}Helfgott/Hindry-Pacheco/Griffon\textup{)} Let $A\subset [0,1]$ be an
interval of length $\alpha$.  Let $p$ be a prime number and let $d$
run through positive integers prime to $p$.  Let $\<p\>$ act on $\Z/d\Z$ by
multiplication, and let $O$ be the set of
orbits. Then
$$\lim_{d\to\infty}\frac 1d\sum_{o\in O}
\left|\frac{\left|\left\{a\in o|\<a/d\>\in A\right\}\right|}{|o|}-\alpha\right|=0.$$
\end{prop}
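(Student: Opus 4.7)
The strategy is to reduce the stated equidistribution to the qualitative bound $|O| = o(d)$ already supplied by the orbit-counting machinery of Section~\ref{ss:TGGE}. The essential observation is completely elementary: each summand is the absolute difference of two quantities in $[0,1]$ and so lies in $[0,1]$, whence
$$\frac{1}{d}\sum_{o \in O}\left|\frac{\left|\left\{a\in o\mid\<a/d\>\in A\right\}\right|}{|o|} - \alpha\right| \;\leq\; \frac{|O|}{d},$$
and it suffices to prove $|O|/d \to 0$ as $d \to \infty$ through integers prime to $p$.

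To control $|O|$ I would specialize to the Kummer-tower setup of Section~\ref{ss:towers} over the base field $\Fp$, so that $q = p$. Under the standard identification $\mu_d \isoto \Z/d\Z$, the action of $\Gamma_d = \gal(\Fp(\mu_d)/\Fp)$ on $G_d = \mu_d$ coincides with multiplication by $p$ on $\Z/d\Z$, and hence $O$ is precisely the orbit space $G_d/\Gamma_d$. Lemma~\ref{lemma:H-examples} verifies Hypothesis~\ref{hyp:big-orbits} for this tower, so Lemma~\ref{lemma:orbit-bound} applies and produces a constant $C_1$ (depending only on $p$) with $|O| \leq C_1\, d/\log d$ for all $d > 1$ prime to $p$. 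Combined with the display above, this gives the bound $C_1/\log d$ on the left-hand side of the proposition, which tends to $0$.

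The only ``hard step,'' such as it is, is the orbit-counting bound itself, which rests on the arithmetic fact that a generic divisor $e$ of $d$ has $\ord(p \bmod e) \gg \log e$. Observe that the argument uses nothing about $A$ beyond the bound $0 \leq \mathbf{1}_A \leq 1$, so the same conclusion holds with $\mathbf{1}_A$ replaced by any $[0,1]$-valued weight function; in particular no Fourier-analytic input is needed. The more refined equidistribution results of Helfgott, Hindry--Pacheco, and Griffon presumably provide additional information (effective rates, orbit-size-weighted averages, or the two-variable generalization used in the next proposition), but for the qualitative statement above the crude pointwise bound suffices.
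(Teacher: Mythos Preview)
Your argument is correct. Each summand lies in $[0,1]$, so the sum is at most $|O|$; specializing the Kummer tower of Section~\ref{ss:TGGE} to base field $\Fp$ identifies $O$ with $G_d/\Gamma_d$, and Lemma~\ref{lemma:orbit-bound} (via Lemma~\ref{lemma:H-examples}) gives $|O|\le C_1\, d/\log d$.

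The paper itself gives no proof here: it attributes the result to Helfgott, Hindry--Pacheco, and Griffon, cites \cite[Thm.~4.1]{Griffon18a}, and explicitly omits the argument as ``orthogonal to our main concerns.'' Your route is therefore genuinely different and entirely self-contained within the paper; it shows that the proposition \emph{as literally stated} is nearly a tautology once the elementary bound $|O|=o(d)$ is in hand.

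Your closing remark is on target and worth amplifying. The downstream applications (e.g., deducing $\sum_{o}d(o)=d/6+o(d)$ in the four-monomial calculation, or the analogous estimates in Section~\ref{s:Legendre}) actually require the $|o|$-\emph{weighted} discrepancy
\[
\frac{1}{d}\sum_{o\in O}|o|\left|\frac{|\{a\in o:\langle a/d\rangle\in A\}|}{|o|}-\alpha\right|\to 0,
\]
which does \emph{not} follow from your pointwise bound (the weighted sum can remain of order $d$ even when $|O|=o(d)$). The cited analytic equidistribution results presumably deliver this stronger statement. So your proof settles the proposition exactly as written, and in doing so exposes a gap between its literal content and what the paper actually invokes later.
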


\begin{prop}\label{prop:equi2}
Let $p$ be a prime number, let $r$ be a fixed integer prime to $p$ and
let $d$ run through integers prime to $p$.  Let $\<p\>$ act on
$(\Z/r\Z)\times(\Z/d\Z)$ diagonally, and let $O$ be the set of orbits.  Then
$$\lim_{d\to\infty}\frac 1d\sum_{o\in O}
\left|\frac{\left|\left\{(a,b)\in o|\<a/r\>+\<b/d\><1\right\}\right|}{|o|}
-\frac12\right|=0.$$
\end{prop}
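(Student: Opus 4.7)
The plan is to follow the pattern of Proposition~\ref{prop:equi1} exactly: since each summand $\bigl|\tfrac{|\{(a,b)\in o : \langle a/r\rangle + \langle b/d\rangle <1\}|}{|o|} - \tfrac12\bigr|$ is bounded above by $1$, it will suffice to show that the total number of orbits $|O|$ is $o(d)$ as $d \to \infty$ through integers prime to $p$. I would decompose $O$ according to the first projection $\pi_1 \colon (\Z/r\Z)\times(\Z/d\Z) \to \Z/r\Z$ and reduce the two-variable orbit count to a one-variable count of the sort that underlies Proposition~\ref{prop:equi1}.

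First I would observe that $\pi_1(o)$ is always a $\langle p\rangle$-orbit in $\Z/r\Z$, and there are at most $r$ such orbits. For each such orbit $o_1$ of size $e_1$, set $q = p^{e_1}$, so that $q$ acts trivially on $o_1$. After choosing a base point $a_0 \in o_1$, the assignment $o \mapsto \{b \in \Z/d\Z : (a_0,b) \in o\}$ is a bijection from $\{o \in O : \pi_1(o) = o_1\}$ onto the set of $\langle q\rangle$-orbits on $\Z/d\Z$. This reduces the problem to bounding the number $N_q(d)$ of $\langle q\rangle$-orbits on $\Z/d\Z$, for each fixed $q = p^{e_1}$ with $1 \le e_1 \le r$.

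For that I would apply the elementary orbit-counting bound from the proof of Proposition~\ref{prop:equi1}: for any cutoff $N \ge 1$, the number of elements of $\Z/d\Z$ lying in a $\langle q\rangle$-orbit of size at most $N$ is at most $\sum_{k=1}^N \gcd(d,q^k-1) \le q^{N+1}/(q-1)$, while the number of orbits of size exceeding $N$ is at most $d/N$. Taking $N = \lfloor(\log d)/(2\log q)\rfloor$ yields $N_q(d) = O(\sqrt{d}+d/\log d) = O(d/\log d)$. Summing over the at most $r$ choices of $o_1$ and the corresponding bounded values of $q$, we obtain $|O| = O(rd/\log d) = o(d)$, and the trivial inequality $\tfrac1d \sum_{o \in O} |\,\cdot - \tfrac12\,| \le |O|/d$ completes the proof.

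The only step that requires care is the elementary orbit-counting estimate for $\langle q\rangle$-orbits, but this is the very same estimate that drives Proposition~\ref{prop:equi1}, now applied for the fixed power $q=p^{e_1}$ in place of $p$. I do not anticipate any conceptual obstacle; the content of the ``two-variable generalization'' is precisely the bookkeeping that partitions orbits by their first projection and reduces each piece to a one-variable count.
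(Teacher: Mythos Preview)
Your proof is correct. The paper omits its own proof of this proposition, describing it only as ``a straightforward `two-variable' generalization'' of Proposition~\ref{prop:equi1} (which in turn is cited as a consequence of a theorem of Griffon), so there is no explicit argument in the paper to compare against.

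Your route is in fact more elementary than what the paper gestures at: the trivial bound $\bigl|\cdot - \tfrac12\bigr| \le \tfrac12$ on each summand reduces the proposition to the orbit-count estimate $|O| = o(d)$, which you obtain by projecting to the first coordinate and invoking the one-variable bound $N_q(d) = O(d/\log d)$ for each of the finitely many powers $q = p^{e_1}$ with $1\le e_1 \le r$. The bijection you describe between $\{o\in O:\pi_1(o)=o_1\}$ and the $\langle p^{e_1}\rangle$-orbits on $\Z/d\Z$ is correct, and the one-variable orbit bound is essentially the Kummer case of Lemma~\ref{lemma:orbit-bound} already proved in the paper, so you could simply cite that rather than redo the cutoff estimate. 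In short, no genuine equidistribution is needed for the proposition \emph{as stated}---only the coarseness of the orbit decomposition.

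One caveat worth recording: in the applications (e.g., the proof of Theorem~\ref{thm:BS-AIM}) what is actually used is the weighted estimate $\tfrac1d\sum_o\bigl||o\cap I_1|-\tfrac12|o|\bigr|\to 0$, since one needs $\sum_o\bigl||o\cap I_0|-|o\cap I_1|\bigr|=o(d)$. Your trivial-bound approach does not yield this weighted form (there $\sum_o|o|\sim rd$), and for that one does need the finer equidistribution of Griffon. This is an issue with how the proposition is stated versus how it is used, not a flaw in your argument for the statement as written.
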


\begin{prop}\label{prop:equi3}
  Let $p$ be a prime number, let $I=\Z/d\Z$ with $d$ prime to $p$
  equipped with the multiplication action of $\<p\>$, and let
  $J=\{0,1\}$ be a two-element set equipped with the non-trivial action of
  $\<p\>$.  Let $\<p\>$ act on $I\times J$ diagonally, and let $O$ be
  the set of orbits.  Then
$$\lim_{d\to\infty}\frac 1{d}\sum_{o\in O}
\left|\frac{\left|\left\{(a,b)\in o|\<a/d\><1/2, b=0\right\}\right|
+\left|\left\{(a,b)\in o|\<a/d\>>1/2, b=1\right\}\right|}{|o|}
-\frac12\right|=0.$$
\end{prop}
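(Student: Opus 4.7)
The plan is to decompose the orbits of $\langle p\rangle$ on $I\times J$ according to the $\langle p\rangle$-orbit structure on $I$, and then reduce to an analogue of Proposition~\ref{prop:equi1} for the action of $\langle p^2\rangle$ on $\Z/d\Z$. Fix $a\in I$ and let $n=|o_a|$ be the size of its $\langle p\rangle$-orbit. A direct calculation shows that if $n$ is odd, the orbit of $(a,0)$ in $I\times J$ has size $2n$ and contains both $(c,0)$ and $(c,1)$ for every $c\in o_a$; if $n$ is even, $(a,0)$ and $(a,1)$ lie in distinct $\langle p\rangle$-orbits of size $n$, which I denote $o^+=\{(p^ia,\,i\bmod 2)\}_{i=0}^{n-1}$ and $o^-=\{(p^ia,\,(i+1)\bmod 2)\}_{i=0}^{n-1}$.

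In the odd (``long'') case, each $c\in o_a$ appears once with $b=0$ and once with $b=1$, so $N(o)=\#\{c\in o_a:\langle c/d\rangle\neq 1/2\}$, giving $N(o)/|o|=1/2$ unless $d/2\in o_a$. Since this exception can affect at most one orbit, the total contribution of long orbits to the sum is $O(1/d)$. In the even case, writing $h(c)=\mathbf{1}_{\langle c/d\rangle<1/2}$ and expanding definitions gives
\[
N(o^{\pm})=\frac{n}{2}\pm\sum_{i=0}^{n-1}(-1)^i h(p^ia)=\frac{n}{2}\pm T(o_a),
\]
so both paired orbits deviate from $1/2$ by the same amount $|T(o_a)|/n$. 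Since $n$ is even, $o_a$ splits as a disjoint union $\tilde o^+\sqcup\tilde o^-$ of two $\langle p^2\rangle$-orbits of equal size $n/2$, namely $\tilde o^+=\{p^{2i}a\}$ and $\tilde o^-=\{p^{2i+1}a\}$, and
\[
T(o_a)=\sum_{c\in\tilde o^+}h(c)-\sum_{c\in\tilde o^-}h(c)=\frac{n}{2}\bigl(\bar h(\tilde o^+)-\bar h(\tilde o^-)\bigr),
\]
where $\bar h(\tilde o)$ denotes the average of $h$ over $\tilde o$. Hence $|T(o_a)|/n\le\tfrac{1}{2}(|\bar h(\tilde o^+)-\tfrac{1}{2}|+|\bar h(\tilde o^-)-\tfrac{1}{2}|)$.

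Summing over orbits, and noting that each even $\langle p\rangle$-orbit of $I$ produces two orbits in $O$ and two $\langle p^2\rangle$-orbits in the set $O_{p^2}$ of $\langle p^2\rangle$-orbits of $I$, I obtain
\[
\frac{1}{d}\sum_{o\in O}\left|\frac{N(o)}{|o|}-\frac{1}{2}\right|\le O\!\left(\frac{1}{d}\right)+\frac{1}{d}\sum_{\tilde o\in O_{p^2}}\left|\bar h(\tilde o)-\frac{1}{2}\right|.
\]
The last sum is exactly the quantity appearing in Proposition~\ref{prop:equi1} with $A=[0,1/2]$ and $\langle p\rangle$ replaced by $\langle p^2\rangle$. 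Thus the main obstacle is to verify that Proposition~\ref{prop:equi1} remains valid with $p$ replaced by the fixed integer $p^2$ (which is coprime to $d$). I expect the proof in \cite{Griffon18a} to go through verbatim since it relies only on the orbit structure of multiplication by a fixed invertible residue and on standard character-sum bounds, neither of which uses primality of the multiplier; alternatively one can compare $\langle p^2\rangle$-orbits with $\langle p\rangle$-orbits directly, controlling the discrepancy by noting that each $\langle p\rangle$-orbit splits into at most two $\langle p^2\rangle$-orbits of equal size.
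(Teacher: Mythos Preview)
Your argument is correct and is precisely the kind of reduction the paper has in mind: the paper omits the proof entirely, saying only that Proposition~\ref{prop:equi3} is ``a simple corollary of the first'' (Proposition~\ref{prop:equi1}), and your decomposition into odd-length and even-length $\langle p\rangle$-orbits on $I$, followed by a reduction of the even case to equidistribution for $\langle p^2\rangle$-orbits, is the natural way to make that corollary precise.

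Two remarks on the loose end you flag. First, your primary route is fine: the paper itself says Proposition~\ref{prop:equi1} is a \emph{consequence} of Griffon's Theorem~4.1, which is stated for a fixed multiplier coprime to $d$ and does not use primality; so the $p^2$ version you need is already available from the same source. Second, your alternative suggestion---deducing $\langle p^2\rangle$-equidistribution from $\langle p\rangle$-equidistribution by the observation that each $\langle p\rangle$-orbit splits into at most two $\langle p^2\rangle$-orbits of equal size---does \emph{not} work as stated: since $\bar h(o_a)=\tfrac12\bigl(\bar h(\tilde o^+)+\bar h(\tilde o^-)\bigr)$, the triangle inequality gives $|\bar h(o_a)-\tfrac12|\le\tfrac12\bigl(|\bar h(\tilde o^+)-\tfrac12|+|\bar h(\tilde o^-)-\tfrac12|\bigr)$, which is the wrong direction. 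So you should drop that clause and simply invoke Griffon's result for the multiplier $p^2$.
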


\section{Calculations for curves defined by four monomials}
In this section we compute the limit of Brauer-Siegel ratios for a
family of elliptic curves related to the constructions in
\cite{Shioda86} and \cite{Ulmer02}.  We then explain how the same can
be done for families of Jacobians of every genus in every
positive characteristic.

Throughout, let $k=\Fq$, the finite field of cardinality $q$ and
characteristic $p$, and let $K=k(t)$, the rational function field over
$k$.    

\subsection{The curve of \cite{Ulmer02}}
Let $p$ be a prime number, let $d$ be a positive integer prime to $p$,
and let $E_d$ be the elliptic curve over $K$ defined by
\begin{equation}\label{eq:Annals}
y^2+xy=x^3-t^d
\end{equation}
This family of curves was introduced in \cite{Ulmer02} where it was
shown that $\sha(E_d)$ is finite and the rank of $E_d(K)$ is unbounded
as $d$ varies.  Hindry and Pacheco \cite{HindryPacheco16} computed the
Brauer-Siegel ratio of $E_d$ as $d\to\infty$ by analytic means, i.e.,
by a careful study of the $L$-function of $E_d$.  Here we compute it
via algebraic means, more precisely, through a consideration of
$\dim\sha(E_d)$.

\begin{thm}
  We have
$$\lim_{d\to\infty}\BS(E_d)=1.$$
\end{thm}

\begin{proof}
  Because $E_{pd}=E_d^{(p)}$, Theorem~\ref{thm:FrBS} implies that it
  will suffice to compute the limit as $d$ runs through positive
  integers relatively prime to $p$ and tending to infinity.

  We are going to bound $\BS(E_d)$ from below by estimating
  $\dim\sha(E_d)$.  Since the latter is invariant under extension of
  the ground field, we are free to extend $k$ as needed and will do so
  in the geometric argument below.

  Let $\EE_d$ be the smooth projective surface equipped with a
  relatively minimal morphism $\pi:\EE_d\to\P^1$ whose generic fiber
  is $E_d$.  The procedure for constructing a model $\EE_d$ is
  explained in general in \cite[Lecture 3]{Ulmer11}, and this
  particular example is carried out in detail in
  \cite[\S3]{Ulmer02}. The important thing to know about $\EE_d$ is
  that it is birational to the hypersurface in $\A^3_{(x,y,t)}$
  defined by the equation \eqref{eq:Annals}.

  Using the method of \cite{Shioda86}, it is proven in
  \cite[\S4]{Ulmer02} that $\EE_d$ is birational to the quotient of
  the Fermat surface of degree $d$ by a group of order $d^2$.  It is
  proven in \cite{ShiodaKatsura79} that the Fermat surface of degree
  $d$ is birational to the quotient of the product of two Fermat
  curves of degree $d$ by a group of order $d$. (Here we may need to
  extend $k$ so that it contains the $2d$th roots of unity.)  Putting
  these together, we find that $\EE_d$ is birational to the quotient
  of $F_d\times F_d$ by the group
$$\Delta\subset(\mu_d^3/\mu_d)^2\subset\aut(F_d)\times\aut(F_d)$$
generated by
$$([\zeta^2,\zeta,1],[1,1,1]),
([1,\zeta,1],[\zeta^{3},1,1]),
\text{ and }([1,1,\zeta],[1,1,\zeta])$$
where $\zeta$ is a primitive $d$th root of unity in $k$.  

It follows from Corollary~\ref{cor:dim-sha} that
\begin{equation}\label{eq:Adim}
\dim\sha(E_d)=
\dim\Hom_A\left(H^1(F_d)/p^n,H^1(F_d)/p^n\right)^\Delta
\end{equation}
for all sufficiently large $n$.  Section~\ref{s:Fermat} and
Proposition~\ref{prop:Fermat-cohom} describe the cohomology group
$H^1(F_d)$ with its action of Frobenius.  They show in particular that
the dimension in the last display can be computed by the methods of
Section~\ref{s:p-adic}.

To spell this out, recall that the cohomology of $F_d$ splits into
lines indexed by 
$$A'=\left\{(a_0,a_1,a_2)|a_i\neq0, \sum
  a_i=0\right\}\subset(\Z/d\Z)^3$$ and that $A'$ is the disjoint
union of $A_0$ and $A_1$  as in Section~\ref{s:Fermat}.  The curves
$F_d$ and their cohomology furnish data $M=N=H^1_{crys}(F_d/W(k))$,
$I=J=A'$, and $(c_i,d_j)$ as in Subsection~\ref{ss:data}.

A short calculation reveals that the basis elements $\varphi_{ij}$ which
contribute to the right hand side of \eqref{eq:Adim} are those indexed
by $(i,j)$ of the form
$$(i,j)=(a_0,a_1,a_2,b_0,b_1,b_2)=b_1(-3,6,-3,2,1,-3)$$
where $b_1\in\/d\Z$ is such that $6b_1\neq0$.  In other words,
projection to the $b_1$ coordinate allows us to identify the orbits of
$\<p\>$ on $I\times J$ which contribute to \eqref{eq:Adim} with the
orbits of $\<p\>$ on
$$B=\left\{b\in\Z/d\Z|6b\neq0\right\}.$$

Under this identification, $(i,j)\in I_0\times J_1$ if and only if 
$$0<\left\<\frac bd\right\><1/6$$
and $(i,j)\in I_1\times J_0$ if and only if 
$$5/6<\left\<\frac bd\right\><1$$
where $\<\cdot\>$ denotes the fractional part.  Thus, the invariant
$d(o)$ of Subsection~\ref{ss:data} becomes the following invariant of
orbits of $\<p\>$ on $B$: Setting
$$B_0=\left\{b\in\Z/d\Z|0<\<b/d\><1/6\right\}\text{ and }
B_1=\left\{b\in\Z/d\Z|5/6<\<b/d\><1\right\},$$
we have
$$d(o)=\min\left(|o\cap B_0|,|o\cap B_1|\right).$$

Finally, the equidistribution result Proposition~\ref{prop:equi1} yields that
$$\sum_{o\in O}d(o)=d/6+\epsilon$$
where $\epsilon/d\to0$ as $d\to\infty$, and so
$$\dim\sha(E_d)= d/6+\epsilon.$$

It follows from \cite[\S2]{Ulmer02} that $\deg\omega_{E_d}=\lceil
d/6\rceil$, so by applying Corollary~\ref{cor:sha-BS}, we conclude that 
$$\liminf_{d\to\infty}BS(E_d)\ge1.$$
Taking into account the upper bound \eqref{eq:BS-HP} of Hindry and
Pacheco, we finally conclude that
$$\lim_{d\to\infty}BS(E_d)=1.$$
\end{proof}

\subsection{Other elliptic curves}
The methods employed in the previous subsection can be used to compute
the limiting Brauer-Siegel ratio for several other families of
elliptic curves, namely those defined by equations involving 4
monomials.  This includes the Hessian family studied in
\cite[Ch.~5]{GriffonThesis} and a closely related family introduced by
Davis and Occhipinti \cite{DavisOcchipinti16} and studied in
\cite[Ch.~7]{GriffonThesis}.  We will not give the details here, since no
fundamentally new phenomena arise.

\subsection{Higher genus Jacobians}
For every prime $p$ and every $g>0$, there is a sequence of curves of
genus $g$ over $\Fp(t)$ whose Jacobians are absolutely simple, satisfy
the Birch and Swinnerton-Dyer conjecture, and have unbounded analytic
and algebraic ranks; see \cite[\S7]{Ulmer07b}.  Since these curves are
defined by four monomials, the methods of this paper suffice to
compute the limit of their Brauer-Siegel ratios. In the rest of this
subsection, we explain the details for the main case, namely when $g$
is a positive integer and $p$ is a prime such that
$p\nodiv(2g+2)(2g+1)$.  The other cases are similar and we omit them
in the interest of brevity.

Fix a positive integer $g$, a prime $p$ such that
$p\nodiv(2g+2)(2g+1)$, and a positive integer $d$.  Let $X_d$ be the
smooth, proper curve of genus $g$ over $K=\Fp(t)$ defined by
\begin{equation}\label{eq:Inv}
y^2=x^{2g+2}+x^{2g+1}+t^d
\end{equation}
and let $J_d$ be its Jacobian.

\begin{thm}
  $$\lim_{d\to\infty}\BS(J_d)=1.$$
\end{thm}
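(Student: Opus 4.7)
The plan is to imitate the proof for $E_d$ from the preceding subsection, now working with the $g$-dimensional Jacobian $J_d$. First, the $p$-power Frobenius pullback of $X_d$ is obtained by substituting $t \mapsto t^p$ in \eqref{eq:Inv}, so $X_{pd} \cong X_d^{(p)}$ and $J_{pd} \cong J_d^{(p)}$. Since the $L$-function (in particular, the slopes) is an isogeny invariant and $\deg\omega_{J_{pd}} = p\cdot\deg\omega_{J_d}$ grows with $d$, the argument of Theorem~\ref{thm:FrBS}, which by the Remark following it applies to any abelian variety whose height grows under Frobenius pullback and whose Tamagawa number is $o(H)$, reduces the task to proving $\lim BS(J_d) = 1$ as $d$ runs through integers prime to $p$. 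The Tamagawa bound needed is supplied by Corollary~\ref{cor:tau-av}(1) combined with Subsection~\ref{ss:T3}, since $J_d$ is the Kummer family associated to the fixed Jacobian $J$ of $y^2 = x^{2g+2} + x^{2g+1} + t$, and $J$ acquires semi-stable reduction at $t = 0$ and $t = \infty$ over a suitable tame cyclic extension.

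For $d$ prime to $p$, the construction of \cite[\S7]{Ulmer07b} exhibits the smooth projective model $\XX_d$ of $X_d$ as birational to a quotient $(\CC \times \DD)/\Delta$ of a product of two curves by a finite group $\Delta$ of order prime to $p$. Here $\CC$ is the hyperelliptic curve $y^2 = x^{2g+2} + x^{2g+1} + 1$, which in our characteristic is a quotient of the Fermat curve $F_{2(2g+1)(2g+2)}'$ by the explicit construction of Subsection~\ref{ss:F-quotients}, while $\DD$ is the superelliptic curve $\CC_{2,d}$. Applying Proposition~\ref{prop:Br-DPC}, Proposition~\ref{prop:Br-crys} and Corollary~\ref{cor:dim-sha} then yields, for all sufficiently large $n$,
$$\dim\sha(J_d) = \dim\Hom_A\bigl(H^1(\CC)/p^n,\,H^1(\DD)/p^n\bigr)^\Delta.$$
Proposition~\ref{prop:Fermat-cohom} together with Subsection~\ref{ss:F-quotients} furnishes crystalline bases of $H^1(\CC)$ and $H^1(\DD)$ indexed by finite sets $I$ and $J$ with the decompositions $I = I_0 \cup I_1$ and $J = J_0 \cup J_1$ required in Section~\ref{s:p-adic}. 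Theorem~\ref{thm:dim} then expresses $\dim\sha(J_d)$ as $\sum_o d(o)$, the sum running over $\<p\>$-orbits on the $\Delta$-invariant part of $I \times J$, which after projection is identified with a subset of $(\Z/r\Z) \times (\Z/d\Z)$ for the fixed integer $r$ depending only on $g$.

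The two-variable equidistribution result Proposition~\ref{prop:equi2} now gives $\dim\sha(J_d) = \beta\, d + o(d)$ for an explicit constant $\beta > 0$ read off from the decompositions of $I$ and $J$. In parallel, a direct computation using a regular model of $X_d$ (or Lemma~\ref{lemma:deg-omega} applied after a suitable base extension that trivializes the mild singularities at $t = 0$ and $t = \infty$) shows $\deg\omega_{J_d} = \beta\, d + O(1)$ with the \emph{same} leading constant $\beta$. Therefore $\dim\sha(J_d)/\deg\omega_{J_d} \to 1$, and Corollary~\ref{cor:sha-BS}(2) gives $\liminf_d BS(J_d) \ge 1$; the matching upper bound $\limsup_d BS(J_d) \le 1$ follows from \eqref{eq:BS-HP}, which applies because $\dim J_d = g$ is fixed and the $J_d$ are non-constant. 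The main obstacle is the explicit bookkeeping in Step~2: pinning down the group $\Delta$ inside $\aut(\CC) \times \aut(\DD)$, computing the $\Delta$-invariant orbits via Proposition~\ref{prop:equi2}, and verifying that the leading constant $\beta$ produced by this orbit sum matches the leading constant in the asymptotic of $\deg\omega_{J_d}$. This is the analogue of the identification $d/6$ on both sides in the proof for $E_d$, and requires care because the relevant Fermat curves now depend on $g$.
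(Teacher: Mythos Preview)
Your overall strategy---estimate $\dim\sha(J_d)$ via a product-of-curves decomposition, match the leading constant against $\deg\omega_{J_d}$, then invoke Proposition~\ref{prop:sha-BS} and the Hindry--Pacheco upper bound---is the right one, and the reduction to $d$ prime to $p$ and the handling of $\tau(J_d)$ are fine.  But the heart of the argument, your Step~2 decomposition, has a genuine gap.

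You claim that $\XX_d$ is birational to $(\CC\times\DD)/\Delta$ with $\CC$ the \emph{fixed} hyperelliptic curve $y^2=x^{2g+2}+x^{2g+1}+1$ and $\DD=\CC_{2,d}$, citing \cite[\S7]{Ulmer07b}.  That reference does not give such a decomposition: it carries out the Shioda ``four-monomial'' construction, which (together with \cite{ShiodaKatsura79}) exhibits $\XX_d$ as birational to a quotient of $F_{2d}\times F_{2d}$, i.e.\ a product of two Fermat curves \emph{both of degree growing with $d$}.  There is no evident Berger-type presentation of $X_d$ with one factor independent of $d$; indeed the base surface $\XX_1$ (obtained by setting $s=t^d$) is rational, so the fiber-product point of view does not yield a product of two curves in the sense of Section~\ref{s:DPC}.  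Relatedly, your assertion that $\CC:y^2=x^{2g+2}+x^{2g+1}+1$ is a Fermat quotient ``by the explicit construction of Subsection~\ref{ss:F-quotients}'' is not correct: that subsection only treats curves $y^r=x^d+1$, and $\CC$ is not of this shape.

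The paper proceeds instead with the $F_{2d}\times F_{2d}$ decomposition.  The $\Delta$-invariant orbits in $A'\times A'$ then project bijectively to a subset $B\subset\Z/d\Z$ via a single coordinate, and the conditions $(i,j)\in I_0\times J_1$ and $(i,j)\in I_1\times J_0$ translate into explicit unions of subintervals of $[0,1]$ for $\langle b/d\rangle$, each of total length $g/(8g+4)$.  This is a \emph{one}-variable situation, so Proposition~\ref{prop:equi1} (not~\ref{prop:equi2}) applies and gives $\dim\sha(J_d)=dg/(8g+4)+o(d)$.  The matching constant for $\deg\omega_{J_d}$ is obtained by an explicit change of variables showing good reduction at $t=0,\infty$ when $(2g+1)(2g+2)\mid d$, computing the orders of the $\omega_i=x^{i-1}dx/y$ there, and then invoking Lemma~\ref{lemma:deg-omega} for general $d$.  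Your proposal leaves both the explicit constant $\beta$ and its agreement on the two sides unverified; in the actual argument this computation is the bulk of the work and cannot be black-boxed.
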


\begin{proof}
  Once again, it suffices to restrict to $d$ not divisible by $p$.  We
  will bound $\BS(J_d)$ from below by estimating $\dim\sha(J_d)$ using
  that $X_d$ has a model which is dominated by a product of Fermat
  curves.  As usual, we are free to expand the ground field $\Fp$ and
  we do so as needed below.

Let $\XX_d$ be the smooth projective surface equipped with a relatively
minimal morphism $\pi:\XX_d\to\P^1$ with generic fiber $X_d$.  Again,
what is most important is that $\XX_d$ is birational to the
hypersurface in $\A^3$ defined by the equation~\eqref{eq:Inv}. 

Using the method of \cite{Shioda86} (see also \cite{Ulmer07b}) , one
sees that $\XX_d$ is birational to the quotient of the Fermat surface
of degree $2d$ by a group of order $(2d)^2$, and therefore birational
to the quotient of $F_{2d}\times F_{2d}$ by a group of order $(2d)^3$.
(Here we enlarge $\Fp$ to a finite extension $k$ that contains the
$2d$th roots of unity.)  More precisely, carrying out the procedure of
\cite[\S6]{Ulmer07b} and using \cite{ShiodaKatsura79}, one finds that
$\XX_d$ is birational to the quotient of $F_{2d}\times F_{2d}$ by the group
$$\Delta\subset(\mu_{2d}^3/\mu_{2d})^2\subset\aut(F_{2d})\times\aut(F_{2d})$$
generated by
$$([\zeta^2,1,1],[1,1,1]),
([1,1,1],[1,\zeta^d,1]),
([1,1,1],[\zeta,\zeta^{2g+2},1]),
\text{ and }([1,1,\zeta],[1,1,\zeta])$$
where $\zeta$ is a primitive $2d$th root of unity in $k$.  

It follows from Corollary~\ref{cor:dim-sha} that
\begin{equation}\label{eq:Invdim}
\dim\sha(E_d)=
\dim\Hom_A\left(H^1(F_d)/p^n,H^1(F_d)/p^n\right)^\Delta  
\end{equation}
for all sufficiently large $n$.

As in the previous subsections, the curves $F_{2d}$ and their cohomology
furnish data $M=N=H^1_{crys}(F_{2d}/W(k))$, $I=J=A'$, and $(c_i,d_j)$ as
in Subsection~\ref{ss:data}.

A short calculation reveals that the basis elements $\varphi_{ij}$ which
contribute to the right hand side of \eqref{eq:Invdim} are those
indexed by $(i,j)$ of the form
$$(i,j)=(a_0,a_1,a_2,b_0,b_1,b_2)=
\left(-(4g+4)b,2b,(4g+2)b,d,d-(4g+2)b,(4g+2)b\right)$$
where $b\in\Z/d\Z$ is such that none of the coordinates $a_0,\dots,b_2$
are zero in $\Z/2d\Z$.  (Note that all of the coefficients of $b$ above
are even, so the display gives a well-defined element of $(\Z/2d\Z)^6$
even though $b$ lies in $\Z/d\Z$.)  Thus the relevant orbits of
$\<p\>$ on $I\times J$ can be identified with the orbits of $\<p\>$ on
the subset $B$ of $\Z/d\Z$ where none of the coordinates of $(i,j)$ is 0.

Next we work out conditions on $b$ for the corresponding $(i,j)$ to
lie in $I_0\times J_1$ or $I_1\times J_0$.  One finds that 
$$i=(a_0,a_1,a_2)=(-(4g+4)b,2b,(4g+2)b)$$
lies in $I_0$ if and only if the fractional part $\<b/d\>$ lies in one of
the intervals 
$$\left(\frac{k+1}{2g+2},\frac{k+1}{2g+1}\right),\quad k=0,\dots,2g$$
and $i$ lies in $I_1$ if and only if the fractional part $\<b/d\>$ lies in one of
the intervals 
$$\left(\frac{k}{2g+1},\frac{k+1}{2g+2}\right),\quad k=0,\dots,2g.$$
On the other hand, 
$$j=(b_0,b_1,b_2)=(d,d-(4g+2)b,(4g+2)b)$$
lies in $J_0$ if and only if the fractional part $\<b/d\>$ lies in one of
the intervals 
$$\left(\frac{2\ell+1}{4g+2},\frac{2\ell+2}{4g+2}\right),\quad \ell=0,\dots,2g$$
and $j$ lies in $J_1$ if and only if the fractional part $\<b/d\>$ lies in one of
the intervals 
$$\left(\frac{2\ell}{4g+2},\frac{2\ell+1}{4g+2}\right),\quad \ell=0,\dots,2g.$$

It follows that $(i,j)$ lies in $I_0\times J_1$ if and only if
$$\<b/d\>\in\left(\frac{k+1}{2g+2},\frac{2k+1}{4g+2}\right)$$
with $k=g+1,\dots,2g$ and it lies in $I_1\times J_0$ if and only if
$$\<b/d\>\in\left(\frac{2k+1}{4g+2},\frac{k+1}{2g+2}\right)$$
with $k=0,\dots,g-1$.

The total length of the intervals corresponding to $I_0\times J_1$ is
$$\sum_{k=g+1}^{2g}\left(\frac{2k+1}{4g+2}-\frac{k+1}{2g+2}\right)
=\frac{g}{8g+4}$$
and the total length of the intervals corresponding to $I_1\times J_0$ is
$$\sum_{k=0}^{g-1}\left(\frac{k+1}{2g+2}-\frac{2k+1}{4g+2}\right)
=\frac{g}{8g+4}.$$

Transferring the definition of $d(o)$ to $B$ and applying the
equidistribution result Proposition~\ref{prop:equi1}, we find that
$$\dim\sha(J_d)= \sum_{o}d(o)=\frac{dg}{8g+4}+\epsilon$$
where $\epsilon/d\to0$ as $d\to\infty$.

We pause briefly to consider the case $g=1$.  By \cite{Weil54}, the
Jacobian of $X_d$ is the elliptic curve
$$y^2=x^3-4t^dx+t^d.$$
It is easy to see that the bundle $\omega_d$ attached to $J_d$ has
degree $\lceil d/12\rceil$.  It then follows from our estimation of
$\dim\sha(J_d)$ and Corollary~\ref{cor:sha-BS} that
$\liminf_{d\to\infty}\BS(J_d)\ge1$ and thus, by the Hindry-Pacheco
upper bound \eqref{eq:BS-HP}, that $\lim_{d\to\infty}\BS(J_d)=1$.

To extend this to higher genus, we will give an upper
bound on the degree of $\omega_d$ of the form $dg/(8g+4)+\epsilon$
where $\epsilon/d\to0$ as $d\to\infty$.  More precisely, we will show
that $\deg(\omega_d)=dg/(8g+4)$ for all $d$ divisible by
$(2g+1)(2g+2)$.  For a general $d$, we let 
$$d'=\lcm(d,(2g+1)(2g+2))$$
and apply Lemma~\ref{lemma:deg-omega} to conclude that
$$\deg(\omega_d)\le dg/(8g+4)+2g\left((2g+1)(2g+2)-1\right)$$
which gives the desired estimate.

For $i=1,\dots,g$, let $\omega_i$ be the 1-form $x^{i-1}dx/y$ on $X_d$
over $K$.  These 1-forms are regular and give a basis of
$H^0(X,\Omega^1_{X/K})$.  We will consider their extensions to a
suitable model $\pi:\XX\to\P^1$ of $X$ and use them to compute
$\deg(\omega_d)$.  

In \cite[\S7.7]{Ulmer07b}, a model of $X$ over
$U=\P^1\setminus\{0,\infty\}$ is constructed which is regular and a
Lefschetz pencil, i.e., its singular fibers are irreducible with one
ordinary node each.  It is easy to see that the differentials
$\omega_i$ extend to this model and
$$\sigma:=\omega_1\wedge\cdots\wedge\omega_g$$
defines a nowhere vanishing section of $\omega_d$ over $U$.
To compute $\deg(\omega_d)$ it will thus suffice to compute the order
of vanishing of $\sigma$ at $t=0$ and $t=\infty$. This is where we use
the hypothesis that $d$ is a multiple of $(2g+1)(2g+2)$.

Indeed, if $d=2(2g+1)k$, then the change of
coordinates $x\to t^{2k}x'$, $y\to t^{(2g+1)k}y'$ brings $X$ into the form
$$y^{\prime2}=t^{2k}x^{\prime2g+2}+x^{\prime2g+1}+1$$
which has good reduction at $t=0$.  Moreover, we see that 
$\omega_i=t^{(2i-2g-1)k}\omega'_i$ where $\omega'_i=\frac{x^{\prime
    i-1}dx'}{y'}$, and that the $\omega'_i$ have linearly independent
reductions at $t=0$.  This shows that $\sigma$ has a pole at $t=0$ of
order
$$\sum_{i=1}^{g}\frac{(2g+1-2i)d}{2(2g+1)}.$$

Similarly, when $d=(2g+2)\ell$, the
change of coordinates $x\to t^{2\ell}x$, $y\to t^{(2g+2)\ell}y$ brings $X$ into
the form
$$y^2=x^{2g+2}+t^{-\ell}x^{2g+1}+1$$
which has good reduction at $t=\infty$.  Moreover, we see that
$\omega_i=t^{(i-g-1)\ell}\omega'_i$ where
$\omega'_i=\frac{x^{\prime i-1}dx'}{y'}$, and that the $\omega'_i$ have
linearly independent reductions at $t=\infty$.  This shows that $\sigma$
has a zero at $t=\infty$ of order
$$\sum_{i=1}^{g}\frac{(g+1-i)d}{{2g+2}}.$$

A short computation then shows that $\deg(\omega_d)$ is $dg/(8g+4)$.

Note that these calculations also show that $J_d$ has good reduction
at $t=0$ and $t=\infty$ when $d$ is divisible by $(2g+1)(2g+2)$.
Using Section~\ref{ss:tau-tame}, these reduction results imply that
$\tau(J_d)=O(H(J_d)^\epsilon)$ for all $\epsilon>0$.  Then
Proposition~\ref{prop:sha-BS} shows that
$$\liminf_{d\to\infty}\BS(J_d)\ge\liminf_{d\to\infty}
\frac{\dim(\sha(J_d))}{\deg(\omega_{J_d})}\ge1.$$ 
Taking into account the upper bound \eqref{eq:BS-HP} of Hindry and
Pacheco, we finally conclude that
$$\lim_{d\to\infty}BS(J_d)=1.$$
\end{proof}

\section{Calculations for Jacobians related to Berger's
  construction}\label{s:Legendre}
In this section we compute the limiting Brauer-Siegel ratio for some
families of curves related to the construction in \cite{Berger08} and
\cite{Ulmer13a}.

Throughout, let $k=\Fq$, the finite field of cardinality $q$ and
characteristic $p$, and let $K=k(t)$, the rational function field over
$k$.

\subsection{The Legendre curve}\label{ss:Legendre}
Assume that $p>2$, let $d$ be a positive integer, and let
$E_d$ be the elliptic curve over $K$ defined by
\begin{equation}\label{eq:Leg}
y^2=x(x+1)(x+t^d).
\end{equation}
This family of curves has been studied extensively, in particular in
\cite{Ulmer14a, ConceicaoHallUlmer14, Ulmer14d} and
\cite[Ch.~4]{GriffonThesis}.  In the latter, the limit of the Brauer-Siegel
ratio of $E_d$ as $d\to\infty$ was computed by analytic means, i.e.,
by a careful study of the $L$-function of $E_d$.  Here we compute it via
algebraic means, more precisely, through a consideration of
$\dim\sha(E_d)$.

\begin{thm}
  We have
$$\lim_{d\to\infty}\BS(E_d)=1.$$
\end{thm}

\begin{proof}
  As usual, it suffices to consider values of $d$ not divisible by
  $p$.

  Let $\EE_d$ be the smooth projective surface equipped with a
  relatively minimal morphism $\pi:\EE_d\to\P^1$ whose generic fiber
  is $E_d$.  This is constructed in \cite{Ulmer14a} (under the
  simplifying hypothesis that $d$ is even, but the odd case is
  similar).  The main thing we need to know about $\EE_d$ is that it
  is birational to the hypersurface in $\A^3_{(x,y,t)}$ defined by the
  equation \eqref{eq:Leg}.

Let $\CC_d$ be the curve with affine equation
$$x^2=z^d+1$$
and let $\DD_d$ be the curve with affine equation
$$y^2=w^d+1.$$
Both curves admit an evident action of $\Delta=\mu_2\times\mu_d$ (over
$\kbar$).  Let $\Delta$ act ``anti-diagonally'' on $\CC_d\times\DD_d$:
$$(\zeta_2,\zeta_d)(x,z,y,w)
=\left(\zeta_2x,\zeta_dz,\zeta_2^{-1}y,\zeta_d^{-1}w\right).$$
Our first main claim is that $\EE_d$ is birational to the quotient
$\CC_d\times\DD_d/\Delta$ via the map
$$(x,z,y,w)\mapsto \left(x=z^d,y=z^dxy,t=wz\right).$$
Indeed, it is evident that this defines a dominant rational map from
$\CC_d\times\DD_d$ to $\EE_d$ which factors through the
quotient by $\Delta$.  Degree considerations then show that the induced
map has degree 1, i.e., is a birational isomorphism.  

We are thus in position to apply the machinery of Section~\ref{s:DPC}.
In particular, it follows from Corollary~\ref{cor:dim-sha} that
\begin{equation}\label{eq:Ldim}
\dim\sha(E_d)=
\dim\Hom_A\left(H^1(\CC_d)/p^n,H^1(\DD_d)/p^n\right)^\Delta
\end{equation}
for all sufficiently large $n$.
Subsection~\ref{ss:F-quotients} and
Proposition~\ref{prop:Fermat-cohom} describe the cohomology groups
$H^1(\CC_d)$ and $H^1(\DD_d)$ with their actions of Frobenius.  They
show in particular, that the dimension in the last display can be
computed by the methods of Section~\ref{s:p-adic}.

To spell this out, let 
$$I=J=\Z/d\Z\setminus\{0,d/2\text{ (if $d$ is even)}\},$$ 
decomposed as $I_0=J_0=\{i|d/2<i<d\}$ and
$I_1=J_1=\{i|0<i<d/2\}$.  Section~\ref{s:Fermat} shows that the
crystalline cohomology groups $H^1(\CC_d)$ and $H^1(\DD_d)$ with their
action of Frobenius furnish
data ($M$, $N$, $I$, $J$, $c_i$, $d_j$) as in
Subsection~\ref{ss:data}, as well as the invariant $d(o)$ for each
orbit $o$ of $\<p\>$ on $I\times J$.  

Since $\Delta$ acts anti-diagonally, the orbits that contribute to the
right hand side of equation~\ref{eq:Ldim} are those whose elements
$(i,j)$ satisfy $j=-i$.  Write $O^\Delta$ for the set of such orbits.
Applying Theorem~\ref{thm:dim}, we conclude that
\begin{equation}\label{eq:sha-d}
\dim\sha(E_d)=\sum_{o\in O^\Delta}d(o).  
\end{equation}

We may identify the orbits in $O^\Delta$ with the orbits of $\<p\>$ on
$I$ via the projection $\pi_I:I\times J\to I$.  Also, since
$(i,-i)\in I_0\times J_1$ if and only if $i\in I_0$, and
$(i,-i)\in I_1\times J_0$ if and only if $i\in I_1$, we have
$$d(o)=\min(|\pi_I(o)\cap I_0|,|\pi_I(o)\cap I_1|).$$

Thus the sum on the right hand side of \eqref{eq:sha-d} becomes a sum
over orbits of $\<p\>$ on $I$, and the invariant $d(o)$ is 
described ``on average'' in Section~\ref{s:equi}.  In particular, the
equidistribution result Proposition~\ref{prop:equi1} implies that
$$\dim\sha(E_d)=\sum_{o\in O^\Delta}d(o)=d/2+\epsilon_d$$
where $\epsilon_d/d\to0$ as $d\to\infty$.

Since $\deg(\omega_{E_d})=\lceil d/2\rceil$ (e.g., by
\cite[Lemma~7.1]{Ulmer14a}), Corollary~\ref{cor:sha-BS} implies
that
$$\liminf_{d\to\infty}BS(E_d)
\ge\liminf_{d\to\infty}\frac{\dim\sha(E_d)}{\deg(\omega_{E_d})}=1.$$ 
Taking into account the upper bound \eqref{eq:BS-HP} of Hindry and
Pacheco, we conclude that
$$\lim_{d\to\infty}BS(J_d)=1.$$
\end{proof}
 
\subsection{Other elliptic curves}
The methods employed in the previous subsection can be used to compute
the limiting Brauer-Siegel ratio for several other families of
elliptic curves, namely those coming from Berger's construction where
the dominating curves are related to Fermat curves.  This is the case
in particular for the universal curve over $X_1(4)$ studied in
\cite[Ch.~6]{GriffonThesis} and the curve ``$B_{1/2,d}$'' introduced
in \cite[\S4]{Berger08} and studied in \cite[Ch.~8]{GriffonThesis}.
We will not give the details here, since no fundamentally new
phenomena arise.

\subsection{Higher dimensional Jacobians}\label{ss:AIM}
Let $p$ be a prime number, let $q$ be a power of $p$, and let $k=\Fq$.
Let $r$ and $d$ be integers relatively prime to $p$.  Let $X=X_{r,d}$
be the smooth projective curve over $K=k(t)$ associated to the
equation
\begin{equation}\label{eq:AIM}
y^r=x^{r-1}(x+1)(x+t^d).
\end{equation}
This is a curve of genus $r-1$, and the case $r=2$ is the Legendre
curve of Subsection~\ref{ss:Legendre}.  Let $J=J_{r,d}$ be the Jacobian
of $X$.  This family of Jacobians was studied in \cite{AIMgroup},
where among other things it was proven that $\sha(J_{r,d})$ is finite
for all $p$, $q$, $r$, and $d$ as above.  Here we will compute the
limiting Brauer-Siegel ratio for fixed $q$ and $r$ as $d\to\infty$.

\begin{thm}\label{thm:BS-AIM} 
For all $q$ and $r$ as above,
$$\lim_{\substack{d\to\infty\\(p,d)=1}}\BS(J_{r,d})=1.$$
\end{thm}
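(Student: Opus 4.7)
The plan is to mirror the $r=2$ template from Subsection~\ref{ss:Legendre}. As always, it suffices to take $d$ prime to $p$. The key geometric input is to exhibit the surface $\XX_{r,d}$ underlying $J_{r,d}$ as birational to a quotient $(\CC\times\DD)/\Delta$ where $\CC = \CC_{r,d}: x^r = z^d+1$ and $\DD = \DD_{r,d}: y^r = w^d+1$ are the superelliptic curves of Subsection~\ref{ss:F-quotients}, and $\Delta \subset \aut(\CC)\times\aut(\DD)$ is a finite group of order prime to $p$ acting in suitably anti-diagonal fashion. Concretely, I would look for a rational map of the shape
\[
(x,z,y,w) \longmapsto \bigl(X = z^d,\ Y = X \cdot x^{r-1}y,\ t = wz\bigr)
\]
and then pin down $\Delta$ by degree and kernel considerations. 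This specializes to the $\mu_2\times\mu_d$ anti-diagonal used in the Legendre case and should essentially be implicit in the analysis of this family carried out in \cite{AIMgroup}.

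Once the geometric model is in place, Corollary~\ref{cor:dim-sha} identifies $\dim\sha(J_{r,d})$ with $\dim\Hom_A(H^1(\CC)/p^n, H^1(\DD)/p^n)^\Delta$ for $n$ sufficiently large. Via Subsection~\ref{ss:F-quotients}, the groups $H^1(\CC)$ and $H^1(\DD)$ are $W$-submodules of the cohomology of $F'_{rd}$ indexed by the sets $I$, $J$ of pairs $(i,j)$ with $0<i<d$, $0<j<r$ (and $ir+jd\not\equiv 0\pmod{rd}$), each partitioned into $I_0,I_1$ and $J_0,J_1$ depending on whether the relevant sum of fractional parts exceeds $1$. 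The $\Delta$-invariance condition then isolates those orbits of $\langle p\rangle$ on $I\times J$ whose coordinates satisfy a linear compatibility dictated by the anti-diagonal action (in the Legendre case this was simply $j=-i$, and I expect an analogous relation here). Theorem~\ref{thm:dim} reduces the computation to the combinatorial sum $\sum_o d(o)$ over these orbits, and Proposition~\ref{prop:equi2} --- designed precisely for this mixed $(r,d)$ setting --- yields
\[
\dim\sha(J_{r,d}) = c_r \cdot d + o(d) \qquad (d\to\infty,\ (p,d)=1),
\]
where $c_r$ is an explicit constant given by the area of the region of $[0,1]^2$ cutting out the contributing orbits.

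To close the loop I would compute (or extract from \cite{AIMgroup}) the Hodge bundle degree $\deg(\omega_{J_{r,d}})$ for $d$ in a convenient congruence class modulo $r$, then combine this with Lemma~\ref{lemma:deg-omega} to control general $d$. The intrinsic symmetry of the Fermat indices under $(a,b)\mapsto(-a,-b)$, which swaps $A_0$ and $A_1$ and hence the two types of ``contributing'' orbit, should force the ratio $c_r\cdot d/\deg(\omega_{J_{r,d}})$ to tend to $1$. A Tamagawa bound of the form $\tau(J_{r,d}) = O(H(J_{r,d})^\epsilon)$ follows from Corollary~\ref{cor:tau-av}(1), once one verifies semi-stable reduction at $t=0$ and $t=\infty$ --- possibly only after a tame base change $u^e = t$, invoking Subsection~\ref{ss:T3}. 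Proposition~\ref{prop:sha-BS} then gives $\liminf_d \BS(J_{r,d}) \ge 1$, and the Hindry--Pacheco upper bound \eqref{eq:BS-HP}, applied to the family of $(r-1)$-dimensional abelian varieties over $K$ with $r$ fixed, supplies the matching $\limsup_d \BS(J_{r,d}) \le 1$.

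The main obstacle is not the $p$-adic or equidistribution machinery, both of which should run essentially verbatim as in the $r=2$ case, but rather the clean geometric identification of $\XX_{r,d}$ as $(\CC_{r,d}\times\DD_{r,d})/\Delta$ with a sufficiently transparent description of $\Delta$ to enumerate the invariant orbits and match the area $c_r$ against $\deg(\omega_{J_{r,d}})$. A secondary nuisance is the semi-stability verification at $t=0,\infty$ needed to apply the Tamagawa bound; an explicit rescaling of coordinates (paralleling the endgame of Theorem~\ref{thm:BS-AIM}'s four-monomial analogue in the previous section) should handle this, if necessary only after a tame Kummer cover.
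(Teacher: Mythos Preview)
Your proposal is correct and follows essentially the same route as the paper: the same superelliptic covers $\CC_{r,d}$, $\DD_{r,d}$ with anti-diagonal $\Delta=\mu_r\times\mu_d$ (the birational map is $(x,z,y,w)\mapsto(z^d,\,z^d xy,\,wz)$, not $z^d x^{r-1}y$, and is established in \cite[\S3.3]{AIMgroup}), the same reduction via Corollary~\ref{cor:dim-sha} and Theorem~\ref{thm:dim} to $\sum_o d(o)$ over orbits with $j=-i$, and Proposition~\ref{prop:equi2} yielding $c_r=(r-1)/2$. The paper completes the argument exactly as you anticipate: $\deg(\omega_{J_d})=d(r-1)/2$ when $r\mid d$ from \cite{AIMgroup} together with Lemma~\ref{lemma:deg-omega} for general $d$, and the Tamagawa bound via Section~\ref{ss:tau-tame} using semi-stability at $t=0,\infty$ when $r\mid d$.
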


Here the limit is through integers prime to $p$.  It would be possible
to include those $d$ divisible by $p$ using a straightforward
generalization of the ideas in Section~\ref{s:Fr}, but will not do
that here.

\begin{proof}
  Since $r$ will be fixed throughout, we omit it from the notation.
  Let $\XX_d$ be the smooth projective surface equipped with a
  relatively minimal morphism $\pi:\XX_d\to\P^1$ whose generic fiber
  is $X_d$.  This is constructed in \cite[\S3.1]{AIMgroup}.
  The important thing to know about $\XX_d$ is that it
  is birational to the hypersurface in $\A^3_{(x,y,t)}$ defined by the
  equation \eqref{eq:AIM}.

Let $\CC_d$ be the curve with affine equation
$$x^r=z^d+1$$
and let $\DD_d$ be the curve with affine equation
$$y^r=w^d+1.$$
Both curves admit an evident action of $\Delta=\mu_r\times\mu_d$ (over
$\kbar$).  Let $\Delta$ act ``anti-diagonally'' on $\CC_d\times\DD_d$:
$$(\zeta_r,\zeta_d)(x,z,y,w)
=\left(\zeta_rx,\zeta_dz,\zeta_r^{-1}y,\zeta_d^{-1}w\right).$$
It is proven in \cite[\S3.3]{AIMgroup} that $\XX_d$ is birational to the quotient
$\CC_d\times\DD_d/\Delta$ via the map
$$(x,z,y,w)\mapsto \left(x=z^d,y=z^dxy,t=wz\right).$$

We are thus in
position to apply the machinery of Section~\ref{s:DPC}.  In
particular, it follows from Corollary~\ref{cor:dim-sha} that
\begin{equation}\label{eq:AIMdim}
\dim\sha(J_d)=
\dim\Hom_A\left(H^1(\CC_d)/p^n,H^1(\DD_d)/p^n\right)^\Delta
\end{equation}
for all sufficiently large $n$.
Subsection~\ref{ss:F-quotients} and
Proposition~\ref{prop:Fermat-cohom} describe the cohomology groups
$H^1(\CC_d)$ and $H^1(\DD_d)$ with their actions of Frobenius.  They
show in particular, that the dimension in the last display can be
computed by the methods of Section~\ref{s:p-adic}.

To spell this out, let 
$$I=J=\left\{(a,b)\in\Z/r\Z\times\Z/d\Z\left|
a\neq0,b\neq0,\<a/r\>+\<b/d\>\neq1\right.\right\},$$ 
let 
$$I_0=J_0=\left\{(a,b)\in\Z/r\Z\times\Z/d\Z\left|
a\neq0,b\neq0,\<a/r\>+\<b/d\>>1\right.\right\},$$ 
and let 
$$I_1=J_1=\left\{(a,b)\in\Z/r\Z\times\Z/d\Z\left|
a\neq0,b\neq0,\<a/r\>+\<b/d\><1\right.\right\}.$$ 
Section~\ref{s:Fermat} shows that the
crystalline cohomology groups $H^1(\CC_d)$ and $H^1(\DD_d)$ with their
action of Frobenius furnish
data ($M$, $N$, $I$, $J$, $c_i$, $d_j$) as in
Subsection~\ref{ss:data}, as well as the invariant $d(o)$ for each
orbit $o$ of $\<p\>$ on $I\times J$.  

Since $\Delta$ acts anti-diagonally, the orbits that contribute to the
right hand side of equation~\ref{eq:AIMdim} are those whose elements
$(i,j)=(a,b,a',b')$ satisfy $j=-i$, i.e., $a'=-a$ and $b'=-b$.  Write
$O^\Delta$ for the set of such orbits.  Applying
Theorem~\ref{thm:dim}, we conclude that
\begin{equation}\label{eq:AIM-sha-d}
\dim\sha(J_d)=\sum_{o\in O^\Delta}d(o).  
\end{equation}

We may identify the orbits in $O^\Delta$ with the orbits of $\<p\>$ on
$I$ via the projection $\pi_I:I\times J\to I$.  Also, since
$(i,-i)\in I_0\times J_1$ if and only if $i\in I_0$, and
$(i,-i)\in I_1\times J_0$ if and only if $i\in I_1$, we have
$$d(o)=\min(|\pi_I(o)\cap I_0|,|\pi_I(o)\cap I_1|).$$
We note that
$$|I_0|=|I_1|=\frac12\left((r-1)(d-1)-(\gcd(r,d)-1)\right),$$
which for fixed $r$ is asymptotic to $d(r-1)/2$ as $d\to\infty$.

Thus the sum on the right hand side of \eqref{eq:AIM-sha-d} becomes a
sum over orbits of $\<p\>$ on $I$, and the invariant $d(o)$ is
described ``on average'' in Section~\ref{s:equi}.  In particular, the
equidistribution result Proposition~\ref{prop:equi2} implies that
$$\dim\sha(J_d)=\sum_{o\in O^\Delta}d(o)=d(r-1)/2+\epsilon_d$$
where $\epsilon_d/d\to0$ as $d\to\infty$.

To finish the proof, we will show that $\tau(J_d)=O(H(J_d)^\epsilon)$
for all $\epsilon>0$ and that
$\deg(\omega_{J_d})\le d(r-1)/2+\epsilon_d$ where $\epsilon_d/d\to0$
as $d\to\infty$.  Once these claims are established,
Proposition~\ref{prop:sha-BS} implies that
$$\liminf_{d\to\infty}BS(J_d)
\ge\liminf_{d\to\infty}\frac{\dim\sha(J_d)}{\deg(\omega_{J_d})}\ge1.$$  
Taking into account the upper bound \eqref{eq:BS-HP} of Hindry and
Pacheco, we conclude that
$$\lim_{d\to\infty}BS(J_d)=1.$$

The assertion about $\tau(J_d)$ follows from the discussion of
Section~\ref{ss:tau-tame} and the fact (proven in
\cite[\S3.1]{AIMgroup}) that $X_d$ has semi-stable reduction at $t=0$
and $t=\infty$ whenever $r$ divides $d$.

It is proven in \cite[Proof of Proposition 7.5]{AIMgroup} that when
$r$ divides $d$, we have $\deg(\omega_{J_d})=d(r-1)/2$.   In general,
if $d'=\lcm(d,r)$, we have $\deg(\omega_{J_{d'}})=d'(r-1)/2$ and
Lemma~\ref{lemma:deg-omega} shows that
\begin{align*}
\deg(\omega_{J_d})&\le d(r-1)/2 + \frac{2(r-1)^2}{d'/d}\\
  &=d(r-1)/2+\epsilon_d.
\end{align*}
Since $d'/d$ is an integer, $\epsilon_d$ is bounded independently of $d$,
so $\epsilon_d/d\to0$ as $d\to\infty$.

This completes the proof of the theorem.
\end{proof}

\section{Quadratic twists of constant curves}
We conclude the paper with a study of Brauer-Siegel ratios of
quadratic twists of constant elliptic curves.
Throughout we let $p$ be an odd prime number, $\Fq$ a finite field of
characteristic $p$, and $K=\Fq(t)$.

\subsection{Twists of a constant supersingular curve}
Fix a supersingular elliptic curve $E_0$ over $\Fq$ and let
$E=E_0\times_\Fq K$.  For a positive integer $d$ relatively prime to
$p$, let $E_d$ be the twist of $E$ by the quadratic extension
$\Fq(t,\sqrt{t^d+1})$ of $K$. By results of Milne, the
Tate-Shafarevich group of $E_d$ is finite.

\begin{thm}
  We have
$$\lim_{\substack{d\to\infty\\(p,d)=1}}\BS(E_d)=1.$$
\end{thm}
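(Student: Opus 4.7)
The plan is to proceed as in Section~\ref{s:Legendre}, realizing $E_d$ as a quotient of a product of curves, computing $\dim\sha(E_d)$ via crystalline methods, and combining the resulting lower bound with the Hindry-Pacheco upper bound \eqref{eq:BS-HP}. Let $\CC_d$ denote the hyperelliptic curve $s^2=t^d+1$ over $\Fq$ with hyperelliptic involution $\iota$, let $\SS=E_0\times_{\Fq}\CC_d$, and let $\Delta=\<(-1_{E_0},\iota)\>$ act anti-diagonally. A direct inspection of the Weierstrass equation $E_d\colon y^2=(t^d+1)f(x)$ (where $y^2=f(x)$ defines $E_0$) shows that a smooth projective model $\EE_d$ of $E_d$ is birational to $\SS/\Delta$. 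Since $\Delta$ acts as $-1$ on each of $H^1(E_0)$ and $H^1(\CC_d)$ and hence trivially on the $\Hom$, Propositions~\ref{prop:Br-DPC} and \ref{prop:Br-crys} together with Corollary~\ref{cor:dim-sha} yield
$$\dim\sha(E_d)=\dim\Hom_A\left(H^1(E_0)/p^n,H^1(\CC_d)/p^n\right)$$
for all sufficiently large $n$.

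The module $N=H^1(\CC_d)$ fits into the framework of Section~\ref{s:Fermat} with index set $I=\{i\in\Z/d\Z\mid i\ne 0,d/2\}$ and partition $I_1=\{0<i<d/2\}$, $I_0=\{d/2<i<d\}$. The module $M=H^1(E_0)$, however, is supersingular: it is free of rank $2$ over $W$ with $F^2=up$ for some unit $u\in W^\times$ (pure slope $1/2$), so it does \emph{not} fit the framework of Section~\ref{s:p-adic}, which requires slopes in $\{0,1\}$. I would adapt that machinery as follows. Fix a basis $m_1,m_2$ of $M$ with $Fm_1=m_2$ and $Fm_2=upm_1$, and write a $W_\nu$-linear $\varphi$ in the Fermat basis of Proposition~\ref{prop:Fermat-cohom} as $\varphi(m_1)=\sum_i\alpha_ie_i$, $\varphi(m_2)=\sum_i\beta_ie_i$. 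The condition $F\varphi=\varphi F$ then gives
$$\beta_j=\sigma(\alpha_{j/p})c_{j/p}\qquad\text{and}\qquad\sigma(\beta_{j/p})c_{j/p}=up\,\alpha_j,$$
which after elimination produces the recurrence $\sigma^2(\alpha_i)\sigma(c_i)c_{pi}=up\,\alpha_{p^2 i}$ on $\<p^2\>$-orbits of $I$. A careful valuation analysis around each such orbit (analogous to the proof of Theorem~\ref{thm:dim}) combined with the equidistribution statement of Proposition~\ref{prop:equi1} should yield $\dim\sha(E_d)\ge d/2+o(d)$.

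A direct Weierstrass calculation gives $\deg\omega_{E_d}=d/2+O(1)$, and Lemma~\ref{lemma:tau-ec} yields $\tau(E_d)=O(H(E_d)^\epsilon)$ for every $\epsilon>0$. Proposition~\ref{prop:sha-BS} then gives
$$\liminf_{d\to\infty}\BS(E_d)\ge\liminf_{d\to\infty}\frac{\dim\sha(E_d)}{\deg\omega_{E_d}}\ge 1,$$
and combining with the matching upper bound \eqref{eq:BS-HP} completes the proof.

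The main obstacle will be the adaptation of the semi-linear algebra of Section~\ref{s:p-adic} to the supersingular setting: the pure slope $1/2$ forces the recurrence onto $\<p^2\>$-orbits rather than $\<p\>$-orbits, and one must identify the correct analog of the invariant $d(o)$ from Theorem~\ref{thm:dim} and verify, via Proposition~\ref{prop:equi1}, that the total contribution matches $\deg\omega_{E_d}$ on the nose. A subsidiary point, which may be cross-checked against Proposition~\ref{prop:sha-formula} using the slopes of $L(E_d,s)$ deduced from the factorization $L(E/K_d,s)=L(E/K,s)\cdot L(E_d/K,s)$, is to ensure that the denominator $\Hom_A(H^1(E_0),H^1(\CC_d))/p^n$ of Proposition~\ref{prop:Br-crys} contributes only negligibly to the asymptotic count.
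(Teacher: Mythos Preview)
Your overall strategy matches the paper's: realize $\EE_d$ as a $\mu_2$-quotient of $E_0\times\CC_d$, invoke Corollary~\ref{cor:dim-sha}, compute $\dim\sha(E_d)$ via the semi-linear algebra, and close with Proposition~\ref{prop:sha-BS} and the Hindry--Pacheco upper bound. The difference is in how you handle $H^1(E_0)$.

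You assert that the supersingular crystal $H^1(E_0)$ ``does not fit the framework of Section~\ref{s:p-adic}, which requires slopes in $\{0,1\}$.'' This is a misreading of the hypotheses. The data of Subsection~\ref{ss:data} do not require the Newton slopes of the $F$-crystal to lie in $\{0,1\}$; they require only a basis $\{f_j\}$ indexed by a finite $\<p\>$-set $J$ with $F(f_j)=d_jf_{pj}$ and $\ord(d_j)\in\{0,1\}$. For supersingular $E_0$ there is such a basis: take $J=\{0,1\}$ with the nontrivial $\<p\>$-action swapping $0$ and $1$, set $J_0=\{0\}$, $J_1=\{1\}$, and choose $f_0,f_1$ with $F(f_0)=d_0f_1$, $F(f_1)=d_1f_0$ where $\ord(d_0)=0$ and $\ord(d_1)=1$ (see, e.g., \cite[\S5]{Dummigan95}). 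The crystal has Newton slope $1/2$, but that is irrelevant to the framework. With this choice, Theorem~\ref{thm:dim} applies verbatim, and the equidistribution input is exactly Proposition~\ref{prop:equi3}, which was stated for precisely this $I\times J$ setup. The paper's proof is then a direct application; no adaptation is needed.

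Your proposed workaround---eliminating $\beta$ to get a recurrence on $\<p^2\>$-orbits of $I$---is not wrong in spirit, but it is both unnecessary and incomplete as written. The valuation increment along a $\<p^2\>$-step is $\ord(c_i)+\ord(c_{pi})-1\in\{-1,0,1\}$, so you would have to reprove an analogue of Theorem~\ref{thm:dim} for this three-letter alphabet and then match the resulting orbit invariant to a suitable equidistribution statement; Proposition~\ref{prop:equi1} alone (a single interval in $[0,1]$) does not directly control the joint condition on the pair $(i,pi)$. All of this is avoided by recognizing that $H^1(E_0)$ already fits the existing machinery.
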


\begin{proof}
  Let $\EE_d\to\P^1$ be the N\'eron model of $E_d/K$, and let $\CC_d$
  be the smooth projective curve over $\Fq$ defined by $y^2=x^d+1$ and
  equipped with the action of $\mu_2$ given by the hyperelliptic
  involution.  It is easy to see that $\EE_d$ is birational to the
  quotient of $\CC_d\times_\Fq E_0$ by the (anti-) diagonal action of
  $\mu_2$, i.e., by $\mu_2$ acting via the hyperelliptic involution on
  both factors.

We are thus in position to apply the machinery of Section~\ref{s:DPC}.
In particular, it follows from Corollary~\ref{cor:dim-sha} that
\begin{equation}\label{eq:Edim}
\dim\sha(E_d)=
\dim\Hom_A\left(H^1(\CC_d)/p^n,H^1(E_0)/p^n\right)^{\mu_2}
\end{equation}
for all sufficiently large $n$.

Subsection~\ref{ss:F-quotients} and
Proposition~\ref{prop:Fermat-cohom} describe the cohomology group
$H^1(\CC_d)$.  We recall the well-known description of $H^1(E_0)$: It
is a free $W$-module of rank 2 with a basis $e_0,e_1$ such that
$F(e_0)=d_0e_1$ and $F(e_1)=d_1e_0$ where $d_0$ is a unit of $W$ and
$d_1$ is $p$ times a unit.  (See \cite[\S5]{Dummigan95} for a detailed
account.)  To harmonize with earlier notation, let $J_0=\{0\}$,
$J_1=\{1\}$, and $J=J_0\cup J_1$, and equip $J$ with the non-trivial
action of $\<p\>$.

Also, let 
$$I=\Z/d\Z\setminus\{0,d/2\text{ (if $d$ is even)}\},$$ 
decomposed as $I_0=\{i|d/2<i<d\}$ and $I_1=\{i|0<i<d/2\}$.
Section~\ref{s:Fermat} and the preceding paragraph show that the
crystalline cohomology groups $H^1(\CC_d)$ and $H^1(E_0)$ with their
actions of Frobenius furnish data ($M$, $N$, $I$, $J$, $c_i$, $d_j$)
as in Subsection~\ref{ss:data}, as well as the invariant $d(o)$ for
each orbit $o$ of $\<p\>$ on $I\times J$.  We may thus compute the
dimension in the last display by the methods of
Section~\ref{s:p-adic}.

Since $\CC_d$ and $E_0$ are hyperelliptic, the $\mu_2$-invariant part
of their cohomology is trivial, so 
$$\Hom_A\left(H^1(\CC_d)/p^n,H^1(E_0)/p^n\right)^{\mu_2}
=\Hom_A\left(H^1(\CC_d)/p^n,H^1(E_0)/p^n\right).$$
Applying Theorem~\ref{thm:dim}, we conclude
that
\begin{equation}\label{eq:E-sha-d}
\dim\sha(E_d)=\sum_{o\in O}d(o)
\end{equation}
where the sum is over all orbits of $\<p\>$ on $I\times J$.

The equidistribution result Proposition~\ref{prop:equi3} implies that
$$\sum_{o\in O}d(o)=d/2+\epsilon_d$$
where $\epsilon_d/d\to0$ as $d\to\infty$.

Since $t^d+1$ has distinct roots, it is easy to see that
$\deg(\omega_{E_d})=\lceil d/2\rceil$.
Thus Corollary~\ref{cor:sha-BS} implies
that
$$\liminf_{d\to\infty}BS(E_d)
\ge\liminf_{d\to\infty}\frac{\dim\sha(E_d)}{\deg(\omega_{E_d})}=1.$$ 
Taking into account the upper bound \eqref{eq:BS-HP} of Hindry and
Pacheco, we conclude that
$$\lim_{d\to\infty}BS(E_d)=1.$$
\end{proof}

\subsection{Twists of an constant ordinary curve}
Now let $E_0$ be an ordinary elliptic curve over $\Fq$ and set
$E=E_0\times_\Fq K$.  One could use methods similar to those in the
last section to compute $\dim\sha(E_d)$ for the twist of $E$ by
$\Fq(t,\sqrt{t^d+1})$, but much more is easily deduced from results of
Katz in $p$-adic cohomology:

\begin{thm}\label{thm:BS-ord-twist}
  Let $E'$ be any quadratic twist of $E$.  Then
$$\dim\sha(E')=0.$$
\end{thm}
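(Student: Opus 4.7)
The plan is to combine Proposition~\ref{prop:sha-formula} with the factorization of $L(E',s)$ available because $E$ is constant, then exploit the ordinary hypothesis on $E_0$ to pin down the slopes. When the twist character factors through a constant-field extension of $\Fq$, the twist $E'$ is itself a constant elliptic curve, so $(E')_0$ has dimension $1$, $\deg\omega_{E'}=0$, and the polynomial $P$ in Proposition~\ref{prop:sha-formula} is trivial; the formula yields $\dim\sha(E') = 0 + (0-1) + 1 - 0 = 0$ at once. So assume the twist character $\chi$ has a nontrivial geometric part; then $\chi$ corresponds to a geometrically connected double cover $\rho:\CC'\to\P^1$ of some genus $g$, $(E')_0 = 0$, and Proposition~\ref{prop:sha-formula} reduces to
$$\dim\sha(E') \;=\; \deg\omega_{E'} - 1 - \sum_{\lambda_i<1}(1-\lambda_i).$$

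Because $E = E_0\otimes_{\Fq}K$ is constant, a standard calculation (matching Euler factors place by place) gives
$$L(E',s) \;=\; L(\chi,\alpha q^{-s})\,L(\chi,\beta q^{-s}),$$
where $\alpha,\beta$ are the Frobenius eigenvalues on $H^1(E_0/W)$ with $\alpha\beta=q$. The ordinary hypothesis forces $v(\alpha)=0$ and $v(\beta)=1$. If $\gamma_1,\dots,\gamma_{2g}$ are the inverse roots of $L(\chi,T)$, then by Weil~II $v(\gamma_i)\in[0,1]$, so the inverse roots of $L(E',s)$ have $p$-adic slopes $\{v(\gamma_i)\}\sqcup\{1+v(\gamma_i)\}$. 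Only the first block contributes to the sum over $\lambda_i<1$, and the functional equation for $L(\chi)$ forces the multiset $\{v(\gamma_i)\}$ to be symmetric under $v\leftrightarrow 1-v$. A short pairing argument (writing $a=\#\{v(\gamma_i)=0\}$, $b=\#\{0<v(\gamma_i)<1\}$, $c=\#\{v(\gamma_i)=1\}$, so $a=c$ and the $b$-terms pair to sum to $1$) then yields $\sum_{v(\gamma_i)<1}(1-v(\gamma_i)) = a + b/2 = g$.

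To finish, apply Lemma~\ref{lemma:deg-omega} to the double cover $\rho$: here $[K':K]=2$, $R = (2g-2)-2(-2) = 2g+2$, and the curve-genus in the lemma is $g_X = 1$. Since $E'\times_K K' \cong E_0\otimes_{\Fq}K'$ is constant, $\deg\omega_{E'\times_K K'} = 0$, so the lemma gives $2\deg\omega_{E'} \le 0 + 1\cdot(2g+2)$, i.e., $\deg\omega_{E'} \le g+1$. Substituting into the equality above yields $\dim\sha(E') \le (g+1)-1-g = 0$; since $\dim\sha(E')$ is a nonnegative integer, we conclude $\dim\sha(E') = 0$ (and, as a bonus, $\deg\omega_{E'}=g+1$). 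The main obstacle is really just the slope bookkeeping in the middle step: the ordinary hypothesis is exactly what forces the $\beta$-factor slopes into $[1,2]$, so they shift out of the range $\lambda<1$ and produce the clean cancellation $\sum_{\lambda_i<1}(1-\lambda_i) = g$, matching $\deg\omega_{E'}-1$ on the nose.
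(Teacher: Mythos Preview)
Your proof is correct but takes a genuinely different route from the paper's. The paper invokes a theorem of Katz that a product of varieties is Hodge--Witt if and only if one factor is ordinary and the other is Hodge--Witt, together with Milne's characterization that a surface satisfying the Tate conjecture is Hodge--Witt exactly when its Brauer group has dimension zero. Since $E_0$ is ordinary and the double cover $\CC'$ is automatically Hodge--Witt (being a curve), the product $\CC'\times E_0$ is Hodge--Witt; passing to the $\mu_2$-quotient gives $\dim\Br(\EE')=0$ and hence $\dim\sha(E')=0$.

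Your approach instead works directly with Proposition~\ref{prop:sha-formula}: you factor $L(E',s)=L(\chi,\alpha q^{-s})\,L(\chi,\beta q^{-s})$ using the constant-curve structure, use ordinariness of $E_0$ to place the two resulting families of slopes into $[0,1]$ and $[1,2]$ respectively, and extract $\sum_{\lambda_i<1}(1-\lambda_i)=g$ from the Newton-polygon symmetry of $\CC'$ (indeed, more simply, $\sum_i(1-v(\gamma_i))=2g-g=g$ since $\prod_i\gamma_i=\pm q^g$, and the terms with $v(\gamma_i)=1$ contribute nothing). Closing with Lemma~\ref{lemma:deg-omega} and the nonnegativity of $\dim\sha$ (immediate from its definition as a limit of nonnegative quantities) finishes the argument. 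This is more elementary in that it avoids de Rham--Witt cohomology and Katz's result entirely, using only the slope formula already developed in the paper plus standard facts about $L$-functions of curves; as you note, it also yields the exact value $\deg\omega_{E'}=g+1$. The paper's route, by contrast, is more conceptual and would generalize at once to twists of arbitrary constant \emph{ordinary} abelian varieties. One minor quibble: the bound $v(\gamma_i)\in[0,1]$ is not really Weil~II (which concerns archimedean weights) but follows from integrality of the $\gamma_i$ together with the functional equation $\gamma_i\mapsto q/\gamma_i$, or equivalently from Mazur's Newton-above-Hodge inequality for the curve $\CC'$.
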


\begin{proof}
  A variety $X$ over a finite field is said to be \emph{Hodge-Witt} if
  all of its deRham-Witt cohomology groups $H^i(X,W\Omega^j_X)$ are
  finitely generated.  A curve is automatically Hodge-Witt, and a
  surface which satisfies the Tate conjecture is Hodge-Witt if and
  only if the dimension of its Brauer group (in the sense of
  Definition~\ref{prop:sha-dim}) is 0 \cite[\S1]{Milne75}.  In other
  words, a surface $X$ over $\Fq$ satisfying the Tate conjecture is
  Hodge-Witt if and only if
$$\lim_{n\to\infty}\frac{\log|H^2(X\times_\Fq\Fqn,\G_m)[p^\infty]|}{\log(q^n)}=0.$$

A theorem of Katz \cite{Katz83} says that a product of varieties is
Hodge-Witt if and only if one of the factors is ordinary and the other
is Hodge-Witt.

Now let $\CC\to\P^1$ be a double cover corresponding to a quadratic
extension $K'/K$.  Then the N\'eron model $\EE'\to\P^1$ of $E'/K$ is
birational to the quotient of $\CC\times_\Fq E_0$ by $\mu_2$ acting
diagonally by the hyperelliptic involutions.  Since $p>2$, the Brauer
group of the quotient is the $\mu_2$-invariant part of the Brauer
group of $\CC\times_\Fq E_0$, and the latter has dimension 0 since $E_0$
is ordinary.  It follows that the Brauer group of $\EE'$ has dimension
0 and so $\sha(E')$ has dimension zero.
\end{proof}

Thus for a quadratic twist of a constant, ordinary elliptic curve, our
$p$-adic methods do not give a non-trivial lower bound on the
Brauer-Siegel ratio.  This is compatible with Conjecture~1.7
of \cite{HindryPacheco16}, which predicts that the liminf of
$\BS(E')$ as $E'$ runs over all quadratic twists is 0.

We finish by remarking that Griffon has shown \cite{GriffonPoster}
that if $E_d$ is the twist of a constant ordinary $E/K$ by the
quadratic extension $\Fq(t,\sqrt{t^d+1})$, then as $d$ runs through
``supersingular'' integers, i.e., those that divide $p^f+1$ for some
$f$, the limit of $\BS(E_d)$ is 1.  In conjunction with
Theorem~\ref{thm:BS-ord-twist}, this shows that the Brauer-Siegel
ratio of an elliptic curve $E'$ may be large even when the dimension
of $\sha(E')$ is zero.

\bibliography{database}

\end{document}